\documentclass[12pt,oneside]{amsart}
\usepackage{amssymb, amsmath, amsthm}

\usepackage{epsfig}
\usepackage{epstopdf}
\usepackage{graphicx}
\usepackage{color}
\usepackage{mathptmx}
\usepackage{enumerate}

\theoremstyle{plain}
\newtheorem{theorem}{Theorem}[section]

\newtheorem*{theorem*}{Theorem}
\newtheorem{proposition}[theorem]{Proposition}
\newtheorem{corollary}[theorem]{Corollary}
\newtheorem{lemma}[theorem]{Lemma}

\theoremstyle{definition}

\newtheorem*{remark}{Remark}

% \numberwithin{equation}{section}

\parskip 6.6pt
\parindent 0pt

\newcommand{\nil}{\varnothing}
\newcommand{\tild}{\widetilde}
\newcommand{\wihat}{\widehat}
\newcommand{\defn}[1]{\textbf{#1}}
\newcommand{\boundary}{\partial}

\newcommand{\mc}[1]{\mathcal{#1}}
\newcommand{\mb}[1]{\mathbf{#1}}
\newcommand{\ob}[1]{\overline{#1}}
 %normal closure
 %normal closure
 %frontier
\newcommand{\cls}{\operatorname{cl}} %closure
 %star
\newcommand{\genus}{\operatorname{genus}} %genus
 %interior
\newcommand{\inter}[1]{\mathring{#1}}

%\graphicspath{{./}{}}

   % The following mysterious maneuver gets rid of AMS junk at the top
   % and bottom of the first page.

      \makeatletter
      \def\@setcopyright{}
      \def\serieslogo@{}
      \makeatother
      
\begin{document}
   % title

   \title[]{Heegaard surfaces for certain graphs in compressionbodies}
   \author{Scott A Taylor and Maggy Tomova}
   %\email{sataylor@colby.edu}
   \thanks{The second author was supported by a grant from the National Science Foundation during this research.}
   % Note that the short title for running heads goes in square
   % brackets.  This is optional.  The long title goes in curly
   % braces.  In the long title, line breaks are indicated by \\
%\begin{abstract}

%\end{abstract}

   % today's date, or fill in whatever date you prefer
   \date{\today}

% This ends the top matter information.
% We can now tell LaTeX to display the top matter.

\begin{abstract}
Let $M$ be a compressionbody containing a properly embedded graph $T$ (with at least one edge) such that $\boundary_+ M - T$ is parallel to $T \cup \boundary_- M$. We extend methods of Hayashi and Shimokawa to show that if $H$ is a bridge surface for $T$ then one of the following occurs:
\begin{itemize}
\item $H$ is stabilized, boundary stabilized, or perturbed
\item $T$ contains a removable path
\item $M$ is a trivial compressionbody and $H - T$ is properly isotopic in $M - T$ to $\boundary_+ M - T$.
\end{itemize}

The results of this paper are used in later work to show that if a bridge surface for a graph in a 3--manifold is c-weakly reducible then either a degenerate situation occurs or the exterior of the graph contains an essential meridional surface.
\end{abstract}

%\begin{center}
%\textbf{Splittings of Parallelisms}
%\end{center}
\maketitle

\section{Introduction}
Hayashi and Shimokawa \cite{HS3} created a version of thin position which combines the notion of thin position for a knot or link in $S^3$ \cite{G} with thin position for a 3--manifold \cite{ST3}. They prove an analogue of a famous theorem of Casson and Gordon \cite{CG} for their version of thin position. Informally: if a bridge surface for a link in a 3--manifold can be untelescoped then either the bridge position was not ``minimal'' or there is an essential closed or meridional surface in the exterior of the link. The arguments in \cite{HS3} rely heavily on other work \cite{HS1, HS2} which classifies Heegaard splittings of certain 1--manifolds in certain 3--manifolds. These classification theorems are used to understand what happens if every component of a thin surface is boundary parallel in the exterior of the 1--manifold.

In \cite{ST2}, Scharlemann and Thompson defined thin position for a graph in the 3--sphere (see also \cite{GST,S}). Applications have included a new proof of the classification of Heegaard splittings of $S^3$ and a theorem about levelling unknotting tunnels of tunnel number one knots and links. Li \cite{L} used thin position for graphs to show that if for a graph in the 3--sphere thin position is not equal to bridge position then there is an essential meridional or almost meridional planar surface in its exterior.

Although the definition for thin position of a graph in $S^3$ can be easily adapted to define thin position for a graph in any 3--manifold, this definition has not been much used. In \cite{TT}, we extend Hayashi and Shimokawa's definition of thin position for a link in a 3--manifold to graphs in a 3--manifold. We use this definition to prove a Casson-Gordon type theorem which says (informally) that either thin position for a graph in a 3--manifold is bridge position, or there exists an essential meridional or closed surface in its complement, or one or more various degenerate situations occur. This theorem generalizes both \cite{HS3} and \cite{T}.

Just as Hayashi and Shimokawa's work in \cite{HS3} rests on the classification results in \cite{HS1, HS2}, all of which are quite technical, so our result in \cite{TT} rests on the classification results of this paper. One odd feature of this paper is that (in some situations) to classify splittings for a certain type of graph in a certain type of 3--manifold it is helpful to make the graph more complicated by adding certain types of edges. This is, however, in the spirit of \cite{HS1} where Heegaard splittings for a (3--manifold, graph) pair are first defined. 

\section{Definitions}

\subsection{Trivially embedded graphs in compressionbodies}
Suppose that $T$ is a finite graph. We will usually assume that there are no vertices of valence 2; exceptions will be explicitly mentioned. We allow $T$ to contain components homeomorphic to $S^1$. Let $\boundary T$ denote the vertices of valence 1. The vertices of $T$ which are not in $\boundary T$ are called the \defn{interior vertices} of $T$. We say that $T$ is \defn{properly embedded} in a 3--manifold $M$ if $T \cap \boundary M = \boundary T$. A \defn{pod} is a finite tree having at least 3 edges, 0 or 1 of which is a distinguished edge called a \defn{handle}. The edges of the pod which are not the handle are called the \defn{legs} of the pod. If $T' \subset T$ is a subgraph of a graph $T$, then we say that $\cls(T - T')$ is obtained by \defn{removing} $T'$ from $T$.

Let $I = [0,1]$. Let $F$ be a closed, possibly disconnected, surface and let $T$ be the disjoint union of finitely many edges and pods properly embedded in $F \times I$. An edge $e$ of $T$ is \defn{vertical} if it is isotopic to $\{\text{point}\} \times I$. An edge of $T$ is a \defn{bridge edge} if there exists an embedded disc $D$ so that $\boundary D$ is the endpoint union of $e$ and an arc in $F \times \{1\}$. The disc $D$ is called a \defn{bridge disc} for the arc $e$. A pod $p$ of $T$ is a \defn{bridge pod} if $p$ has no handle and is contained in a disc $D$ such that $\boundary D \subset F \times \{1\}$. The disc $D$ is called a \defn{pod disc}. The closures of the components of $D - p$ are \defn{bridge discs} for $p$. More generally a disc $E$ such that $\boundary E$ is the endpoint union of an arc traversing exactly two edges of $p$ and an arc in $F \times \{1\}$ is a \defn{bridge disc} for $p$. It is not difficult to see that if every pair of edges of $p$ has a bridge disc then there exists a pod disc for $p$. Finally, suppose that $p$ is a pod with handle $h$. Assume that there is a pod disc for $p - h$. Notice that compressing $F \times I$ along the boundary of a regular neighborhood of a pod disc creates a 3--manifold with one component homeomorphic to $F \times I$. The pod $p$ is a \defn{vertical pod} if $h$ is a vertical edge in that component. 

Suppose that $T$ is the disjoint union of vertical edges, bridge edges, bridge pods, and vertical pods such that the bridge edges, bridge pods, and vertical pods have pairwise disjoint bridge discs and pod discs. Suppose also that these bridge discs and pod discs are disjoint, except at the endpoints of the handles, from the vertical edges and handles. Assume also that the vertical edges and handles can all be simultaneously isotoped to be $\{\text{points}\} \times I$ in $F \times I$. Then $T$ is \defn{trivially embedded} in $F \times I$. If $T$ is properly embedded in a 3--ball $B^3$, $T$ is \defn{trivially embedded} if it is the disjoint union of bridge edges and bridge pods which have pairwise disjoint bridge discs and pod discs.

A \defn{compressionbod}y $C$ is formed from $F \times I$ by attaching a finite number of 1--handles to $F \times \{1\}$. Let $\boundary_- C = F \times \{0\}$ and let $\boundary_+ C = \boundary C - \boundary_- C$. A collection of compressing discs $\Delta$ for $\boundary_+ C$ is a \defn{complete collection} if boundary-reducing $C$ using $\Delta$ produces a manifold homeomorphic to $\boundary_- C \times I$. We consider a handlebody to be a compressionbody with $\boundary_- C = \nil$. In this case, a complete collection of discs is a collection of compressing discs for $\boundary_+ C = \boundary C$ which cut $C$ into a 3--ball. We require compressionbodies to be connected and nonempty. 

Let $T$ be a finite graph properly embedded in a compressionbody $C$. We say that $T$ is \defn{trivially embedded} in $C$ if there exists a complete collection of discs $\Delta$ for $C$ disjoint from $T$ such that if $C'$ is a component of $C$ reduced by $\Delta$, then $T \cap C'$ is trivially embedded in $C'$. Figure \ref{Fig: TrivialGraph} is a schematic depiction of a trivially embedded graph in a compressionbody. Let $\tild{C}$ denote the compressionbody obtained from $C$ by capping off with 3--balls any 2--sphere components of $\boundary_- C$ which are disjoint from $T$.

\begin{center}
\begin{figure}
\scalebox{0.5}{\input{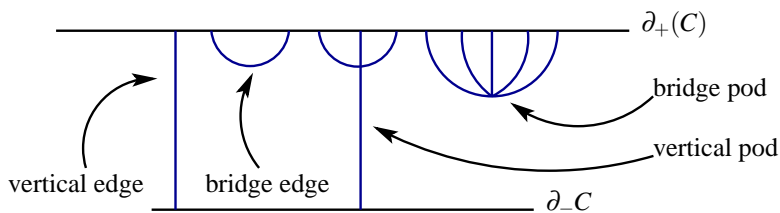_t}}
\caption{A trivially embedded graph in a compressionbody}
\label{Fig: TrivialGraph}
\end{figure}
\end{center}

\begin{remark}
Because we allow a trivially embedded graph in a compressionbody to contain pods with handles, our definition of ``trivially embedded'' is more general than that of Hayashi and Shimokawa \cite{HS1}. (Allowing pod handles is advantageous if a Heegaard surface for a graph in a 3--manifold is perturbed and you wish to unperturb it. See below for the definition of ``perturbed''. See \cite{TT} for details on unperturbing Heegaard splittings.) 
\end{remark}

A \defn{spine} for a compressionbody $C$ with trivially embedded graph $T$ is a 2 or less dimensional complex $Q$ embedded in $C$ so that:
\begin{enumerate}
\item $\boundary_+ C \cap Q = \nil$.
\item $\boundary_- C \cap Q$ is contained in the valence 1 vertices of $Q$ not contained in a 2--cell of $Q$.
\item Every bridge arc of $T$ intersects $Q$ at precisely one vertex of $Q$.
\item If $\tau$ is a bridge pod, then $\tau \cap Q$ is a vertex of both $Q$ and $\tau$.
\item If $\tau$ is a vertical pod, then $\tau \cap Q$ is the handle of $\tau$.
\item All vertical arcs of $T$ are disjoint from $Q$.
\item All valence one vertices of $Q$ lie in $T \cup \boundary_- C$.
\item $C$ collapses to $\boundary_- C \cup Q \cup T$.
\end{enumerate}

We let $\boundary_1 Q$ denote those vertices of $Q$ which lie on $T$. Let $Q^2$ be the union of the 2--simplices of $Q$ and let $Q^1$ be the union of 1--simplices of $Q$ not contained in $Q^2$. If $Q = Q^1$, then we say that the spine $Q$ is \defn{elementary}. In the arguments which involve a spine $Q$, we will always begin by assuming that $Q$ is elementary, but deformations in the course of the argument may convert $Q$ into a non-elementary spine. If $C$ is a handlebody, we will never convert $Q$ into a non-elementary spine.

If $C = B^3$ and $T = \nil$, then a single point in the interior of $C$ is a spine for $C$. If $C = \boundary_- M \times I$ and every arc of $T$ is a vertical arc, then $\nil$ is a spine for $M$.

In general, for a graph $T$ trivially embedded in $C$, it is straightforward to construct a spine for $(C,T)$. Figure \ref{Fig: Spine} depicts a spine for a genus 2 handlebody containing a bridge edge and a bridge pod.

\begin{center}
\begin{figure}[ht]
\scalebox{0.5}{\input{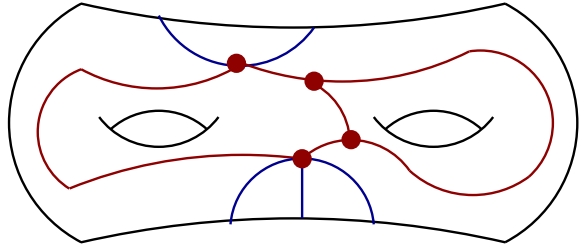_t}}
\caption{A spine for a genus 2 handlebody with a bridge edge and a bridge pod}
\label{Fig: Spine}
\end{figure}
\end{center}

\subsection{Heegaard surfaces and compressions}
Let $M$ be a compact connected orientable 3--manifold. A \defn{Heegaard surface} $H \subset M$ is an orientable surface such that $\cls(M - H)$ consists of two distinct compressionbodies $C_1$ and $C_2$ with $H = \boundary_+ C_1 = \boundary_+ C_2$. We will be studying Heegaard surfaces in a compressionbody $M$.  We always assume that $\boundary_+ M \subset C_2$. Note the possibly confusing notation: $\boundary_+ M$ is a component of $\boundary_- C_2$.

Suppose that $T \subset M$ is a properly embedded finite graph. We say that $H$ is a \defn{Heegaard surface} for $(M,T)$ if $T_i = T \cap C_i$ is trivially embedded in $C_i$ for $i \in \{1,2\}$. We also will say that $T$ is in \defn{bridge position} with respect to $H$ and that $H$ is a \defn{splitting} of $(M,T)$. Notice that this definition of bridge position generalizes that in \cite{HS1} since we allow vertical pods in a compressionbody.

Suppose that $F \subset M$ is a surface such that $\boundary F \subset (\boundary M \cup T)$. Then $F$ is \defn{$T$--compressible}, if there exists a compressing disc for $F - T$ in $M - T$. If $F$ is not $T$--compressible, it is \defn{$T$--incompressible}. $F$ is \defn{$T$--$\boundary$--compressible} if there exists a disc $D \subset M - T$ with interior disjoint from $F$ such that $\boundary D$ is the endpoint union of an arc $\gamma$ in $F$ and an arc $\delta$ in $\boundary M$. We require that $\gamma$ not be parallel in $F - T$ to an arc of $\boundary F - T$. If $F$ is not $T$--$\boundary$--compressible, it is \defn{$T$--$\boundary$--incompressible}.

\subsection{Stabilization, Cancellation, Perturbation, and Removable Edges}
A Heegaard surface $H$ for $(M,T)$ is \defn{stabilized} if there exist discs $D_1 \subset C_1$ and $D_2 \subset C_2$ which are compressing discs for $H$,  have boundaries intersecting transversally in a single point, and are disjoint from $T$. If a Heegaard surface $H$ is stabilized, there is another Heegaard surface $H'$ with $\genus(H') = \genus(H) - 1$ and with $|T \cap H| = |T \cap H'|$. 

Suppose that $D_1 \subset C_1$ and $D_2 \subset C_2$ are bridge discs for $T_1$ and $T_2$ respectively such that the arcs $\boundary D_1 \cap H$ and $\boundary D_2 \cap H$ have disjoint interiors but share at least one endpoint. If such discs exist, $H$ is \defn{cancellable}. If, in addition, $\boundary D_1 \cap H$ and $\boundary D_2 \cap H$ share only a single endpoint then $H$ is \defn{perturbed}. Hayashi and Shimokawa's notion of ``strongly cancellable'' is the same as perturbed. The discs $\{D_1, D_2\}$ will be known as either a \defn{cancelling pair} or \defn{perturbing pair} of discs. If $\boundary D_1 \cup \boundary D_2$ does not contain a vertex of $T$ then $(\boundary D_1 \cup \boundary D_2) \cap T$ lies in a single edge $e$ of $T$. We say that the edge $e$ is \defn{cancellable} or \defn{perturbed} (corresponding to whether $\{D_1,D_2\}$ is a cancelling or perturbing pair of discs). See \cite{TT}, for situations where a splitting can be unperturbed.

Suppose that $F$ is a closed connected surface and that $V = F \times I$. A \defn{type I} Heegaard surface for $V$ is a Heegaard surface which separates the components of $\boundary V$. A \defn{type II} Heegaard surface is a Heegaard surface for $V$ which does not separate the components of $\boundary V$. Type I and Type II Heegaard splittings were classified in \cite{ST}.

Let $F$ be a component of $\boundary_- C_1 \subset \boundary M$ and let $T'$ be a collection of vertical arcs in $F \times [-1,0]$. Let $H'$ be a minimal genus type II Heegaard surface for $F \times [-1,0]$ which intersects each arc in $T'$ exactly twice. $H'$ can be formed by tubing two parallel copies of $F$ along a vertical arc not in $T'$. Assume that $T' \cap (F \times \{0\}) = T \cap F$. (Recall that $F \times[0,1] \subset M$.) We can form a Heegaard surface $H''$ for $M \cup (F \times [-1,0])$ by \defn{amalgamating} $H$ and $H'$. This is simply the usual notion of amalgamation of Heegaard splittings (see \cite{Sc}). In fact, $H''$ is a Heegaard surface for $(M \cup (F \times [-1,0]), T \cup T')$. Since $(M \cup (F \times [-1,0]), T \cup T')$ is homeomorphic to $(M,T)$, we may consider $H''$ to be a Heegaard surface for $(M,T)$. $H''$ is called a \defn{boundary stabilization} of $H$. See Figure \ref{Fig: Amalgamation}.

\begin{center}
\begin{figure}[ht]
\scalebox{0.35}{\input{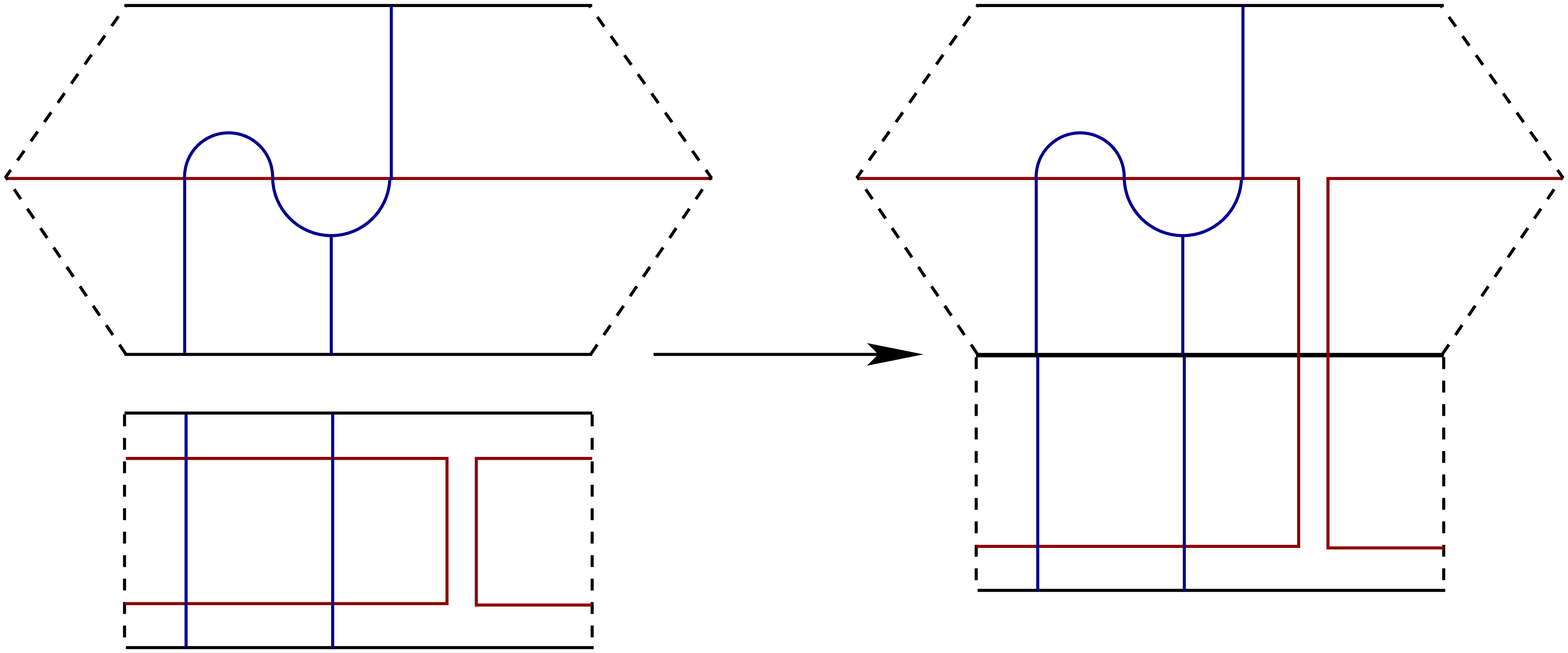_t}}
\caption{Boundary stabilizing a Heegaard surface}
\label{Fig: Amalgamation}
\end{figure}
\end{center}

A \defn{monotonic interior edge} is an edge $e$ of $T$ with no endpoint on $\boundary M$ and which intersects $H$ in a single point. 

Suppose that $\zeta \subset T$ is a 1--manifold which is the the union of edges in $T$ (possibly a closed loop containing zero or one vertices of $T$). We say that $\zeta$ is a \defn{removable path} if the following hold:
\begin{enumerate}
\item[(RP 1)] Either the endpoints of $\zeta$ lie in $\boundary M$ or $\zeta$ is a cycle in $T$.

\item[(RP 2)] $\zeta$ intersects $H$ exactly twice 

\item[(RP 3)] If $\zeta$ is a cycle, there exist a cancelling pair of discs $\{D_1,D_2\}$ for $\zeta$ with $D_j \subset C_j$. Furthermore there exists a compressing disc $E$ for $H$ such that $|E \cap T| = 1$ and if $E \subset C_j$ then $|\boundary E \cap \boundary D_{j+1}| = 1$ (indices run mod 2) and $E$ is otherwise disjoint from a complete collection of bridge discs for $T - H$ containing $D_1 \cup D_2$. 

\item[(RP4)] If the endpoints of $\zeta$ lie on $\boundary M$, there exists a bridge disc $D$ for the bridge arc component of $\zeta - H$ such that $D - T$ is disjoint from a complete collection of bridge discs $\Delta$ for $T - H$. Furthermore, there exists a compressing disc $E$ for $H$ on the opposite side of $H$ from $T$ such that $|E \cap D| = 1$ and $E$ is disjoint from $\Delta$.
\end{enumerate}

If $\zeta$ is a removable path, then a slight isotopy of $\rho$ which does not move the rest of $T$, moves $\zeta$ to be a subset of a spine of one of the compressionbodies $M - H$. (See \cite[Lemma 3.3]{STo}.) Also, note that by (RP2), $\zeta$ can contain at most 3 vertices of $T$ (and that only if it contains two pod handles).

\section{The Main Result}

Let $T$ be properly embedded graph in a compact connected orientable 3--manifold $M$. For the remainder, we assume that there is a component $\ob{F}$ of $\boundary M$ so that $\ob{F}_T = \ob{F} - \inter{\eta}(T)$ is isotopic (rel $\boundary$) in $M_T = M - \inter{\eta}(T)$ to $\boundary M_T - F_T$. Notice that this implies that $M$ is a compressionbody with $\ob{F} = \boundary_+ M$. Henceforth, we write $\boundary_+ M$ instead of $\ob{F}$. 

\begin{theorem}[Main Theorem]\label{Main Theorem}
Suppose that $\boundary_+ M - \inter{\eta}(T)$ is isotopic to the frontier of a regular neighborhood of $\boundary_- M \cup T$. Let $H$ be a Heegaard surface for $(M,T)$. Assume that $T$ contains an edge. Then one of the following occurs:
\begin{enumerate}
\item $H$ is stabilized
\item $H$ is boundary stabilized
\item $H$ is perturbed
\item $T$ contains a removable path disjoint from $\boundary_+ M$. 
\item $M$ is a 3--ball. $T$ is a tree with a single interior vertex. $H - \inter{\eta}(T)$ is parallel to $\boundary M - \inter{\eta}(T)$ in $M - \inter{\eta}(T)$. 
\item $M = \boundary_- M \times I$ and $H- \inter{\eta}(T)$ is isotopic in $M - \inter{\eta}(T)$ to $\boundary_+ M - \inter{\eta}(T)$.
\end{enumerate}
\end{theorem}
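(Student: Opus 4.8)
The plan is to argue as in Casson--Gordon \cite{CG} and Hayashi--Shimokawa \cite{HS1,HS2}, organised as an induction on the complexity $c(H)=(\genus(H),\,|T\cap H|)$ in lexicographic order. First I would set up notation: write $\cls(M-H)=C_1\cup C_2$ with $H=\boundary_+C_1=\boundary_+C_2$ and $\boundary_+M$ a component of $\boundary_-C_2$ (so $\boundary_-C_1\subseteq\boundary_-M$), fix a complete collection $\Delta$ of bridge discs for $T-H$, and fix elementary spines $Q_1,Q_2$ for $(C_1,T_1)$ and $(C_2,T_2)$. The standing hypothesis is then read as the statement that $M_T:=M-\inter{\eta}(T)$ is a product $(\boundary_+M-\inter{\eta}(T))\times I$ whose $1$--end is $(\boundary_-M-\inter{\eta}(T))\cup\fr\,\eta(T)$; in particular every component of $\boundary_-M$ and of the tube $\fr\,\eta(T)$ is incompressible and $\boundary$--incompressible in $M_T$. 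These are the surfaces against which $H$ will be compared.

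I would then split on whether $H$ is strongly irreducible relative to $T$ --- every $T$--compressing disc or bridge disc for $H$ in $C_1$ meeting every such disc in $C_2$ --- or not. In the negative case there are discs $E_1\subset C_1$, $E_2\subset C_2$, each a compressing disc for $H$ disjoint from $T$ or a bridge disc for some $T_i$, with disjoint interiors (and disjoint boundaries when both are compressing discs), and I would run through the configurations of $(E_1,E_2)$. When both are compressing discs disjoint from $T$, the Casson--Gordon analysis yields conclusion (1) or a genus reduction to which the inductive hypothesis applies; one then checks the conclusions for the simpler surface pull back (the reverse of a compression preserves being stabilized, perturbed, boundary stabilized, or containing a removable path, and turns conclusions (5),(6) into (1) or (2)). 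When one of $E_1,E_2$ is a bridge disc, the pattern of the arcs $\boundary E_1\cap H$, $\boundary E_2\cap H$ governs the split: a single shared endpoint gives conclusion (3); a shared endpoint at a vertex of $T$ (or a disc incident to a vertex) is either recognised --- after extracting from the complement of $\Delta$ the auxiliary disc $E$ demanded by (RP3)/(RP4) --- as the certificate of a removable path, or yields an isotopy of $T$ across $H$ lowering $|T\cap H|$; disjoint boundary arcs allow the cancellation move, again lowering $|T\cap H|$ or exposing a stabilization; and a bridge disc together with a compressing disc running, through the collar of $\boundary_-M$, parallel to a vertical edge realises the amalgamation of conclusion (2). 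Because every disc used may be chosen disjoint from the collar of $\boundary_-C_2$, hence from $\boundary_+M$, any removable path produced is disjoint from $\boundary_+M$ as (4) demands.

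In the strongly irreducible case I would invoke the usual fact (cf.\ \cite{HS2}) that a strongly irreducible Heegaard surface can be isotoped to meet a given incompressible surface only in curves essential to both sides, or else to lie in a collar of it. Applying this inside the product $M_T$ to the tubes $\fr\,\eta(e)$ over the edges $e$ of $T$ and to the components of $\boundary_-M$, and feeding the outcome back into the requirement that $T_i$ be trivially embedded in $C_i$, I expect to reach the conclusion that either $H$ can be simplified --- lowering $c(H)$, so induction applies --- or $H-\inter{\eta}(T)$ is isotopic in $M_T$ to $\boundary_+M-\inter{\eta}(T)$, possibly after tubing along one vertical arc. The untubed outcome makes one side of $H$ a collar (with $T_i$ vertical) and forces the other side to be a $3$--ball carrying $T$ as a bridge pod, i.e.\ conclusion (5), or a collar of $\boundary_-M$ carrying $T$ vertically, i.e.\ conclusion (6); the tubed outcome is precisely conclusion (2). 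The hypothesis that $T$ has an edge is used to discard the degenerate empty subcases.

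The main obstacle is the weakly reducible case. One must enumerate the configurations of the pair $(E_1,E_2)$ --- complicated by the three kinds of pieces of $T$ (vertical edges, bridge edges, and pods with or without handles) and by discs incident to vertices of $T$ --- so that each configuration either delivers one of conclusions (1)--(4) outright or strictly lowers $c(H)$, and then one must confirm that the inductive conclusions survive transport back across each reduction. Handling pod handles, which do not occur in \cite{HS1}, is the genuinely new complication, and careful tracking of which compressionbody contains $\boundary_+M$ is what keeps removable paths off $\boundary_+M$ and separates boundary stabilizations from ordinary ones.
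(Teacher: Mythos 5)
Your proposal takes a genuinely different route from the paper, and the weakly reducible branch contains a real gap. If $E_1\subset C_1$ and $E_2\subset C_2$ are disjoint compressing discs for $H-T$, compressing $H$ along them does not yield a Heegaard surface for $(M,T)$ of smaller complexity --- it yields pieces of an untelescoped, generalized splitting --- so there is no simpler splitting of the same pair to which your induction on $(\genus(H),|T\cap H|)$ can be applied; and weak reducibility in the complement of $T$ does not imply conclusion (1): the Casson--Gordon alternative is ``reducible or there is an incompressible surface in the exterior,'' and neither branch lands in the list (1)--(6) without substantial further work (a reducing sphere disjoint from $T$ may bound a ball meeting $T$, so Haken/Waldhausen-type arguments do not directly give a stabilization of $(M,T)$). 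The same difficulty recurs in your bridge-disc configurations: a cancellation or an isotopy of $T$ across $H$ changes the bridge position, and transporting conclusions (1)--(6) back across such a move is exactly the delicate content (in the paper this bookkeeping occupies the lemmas on resolving pod handles, puncturing monotonic interior edges, and undoing the vertical-cut-disc reduction); asserting that the reverse of a compression ``preserves'' the conclusions is not a proof. Note also that the theorem is applied in \cite{TT} precisely to bridge surfaces that are c-weakly reducible but not stabilized, so a scheme in which weak reducibility is disposed of by stabilization or complexity descent cannot be the engine of the argument. The strongly irreducible branch, where the actual classification would have to happen, is only a statement of expectations: isotoping $H$ to meet $\fr\,\eta(T)$ and $\boundary_- M$ essentially does not by itself produce the parallelism statements, especially in the presence of pods and pod handles.

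For contrast, the paper never splits on strong irreducibility. It inducts on $-\chi(\boundary_+ M)$, with the inductive step given by boundary-reducing along a vertical cut disc, and the core case (no vertical cut disc) handled by Proposition \ref{Prop: Strengthened Perturbed}: after adding auxiliary edges so that (A)--(C) hold, one builds the $2$--complex $R\supset T$ cutting $M$ into balls, isotopes and boundary-compresses $R\cap C_2$, and simultaneously deforms an elementary spine $Q$ of $(C_1,T_1)$ along splendiscs, using the classification of essential surfaces in (compressionbody, trivial graph) (Proposition \ref{Prop: surface classification}), Frohman's trick, container and horizontal discs, and the combinatorial tree lemma to force a perturbation, a removable path, or one of the degenerate conclusions (5)--(6). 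To make your outline work you would essentially have to recreate that machinery inside your strongly irreducible case and supply a new argument, absent from the sketch, for the weakly reducible one.
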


The remainder of the paper is devoted to proving the Main Theorem.

We divide $T$ into the union of 3 subgraphs: $T_0$, $T_v$, and $T_s$ as follows. 

Since $\boundary_+ M - \inter{\eta}(T)$ is parallel to $\boundary_- M \cup \boundary \eta(T)$, $T$ contains an elementary spine $T_0$ for $M$. Let $T_v$ denote the components of $T - T_0$ which join $\boundary_- M$ to $\boundary_+ M$. Each component of $T_v$ is a tree with one valence one vertex on $\boundary_- M$. Let $T_s$ denote $T - (T_v \cup T_0)$. Each component of $T_s$ is a tree which joins $T_0$ to $\boundary_+ M$. A component of $T_s$ is called a \defn{spoke}. 

Notice that if $M = B^3$, then $T = T_s$ is a tree. In this case, we let $T_0$ be a vertex of $T$, or if $T$ is a single edge, a vertex of valence 2 in the interior of the edge.

Figure \ref{Fig: GraphT} shows an example with $T$ a $\theta$-graph in a genus two handlebody and every spoke a single edge. The graph $T_0$ is drawn with a thicker line.

\begin{center}
\begin{figure}[ht]
\scalebox{0.5}{\input{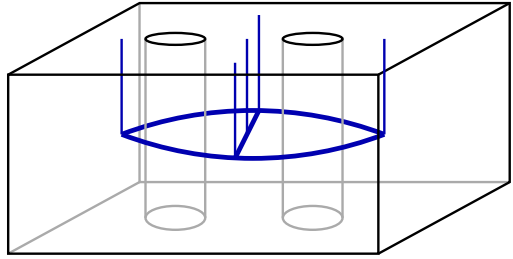_t}}
\caption{An example of the graph $T$ in a genus two handlebody}
\label{Fig: GraphT}
\end{figure}
\end{center}

Figure \ref{Fig: Cbdy With Graph} schematically depicts a compressionbody $M$ with $\boundary_- M$ a torus and $\boundary_+ M$ a genus 2 surface. In the picture, opposite sides of the cube should be glued together. $T_0$ consists of a single edge with both endpoints on $\boundary_- M$. $T_s$ is a single edge joining $T_0$ to $\boundary_+ M$ and $T_v$ consists of a tree with four interior vertices. It shows up twice in the diagram, since $T^2 \times I$ has been cut open into a $\text{square} \times I$.

\begin{center}
\begin{figure}[ht]
\scalebox{0.5}{\input{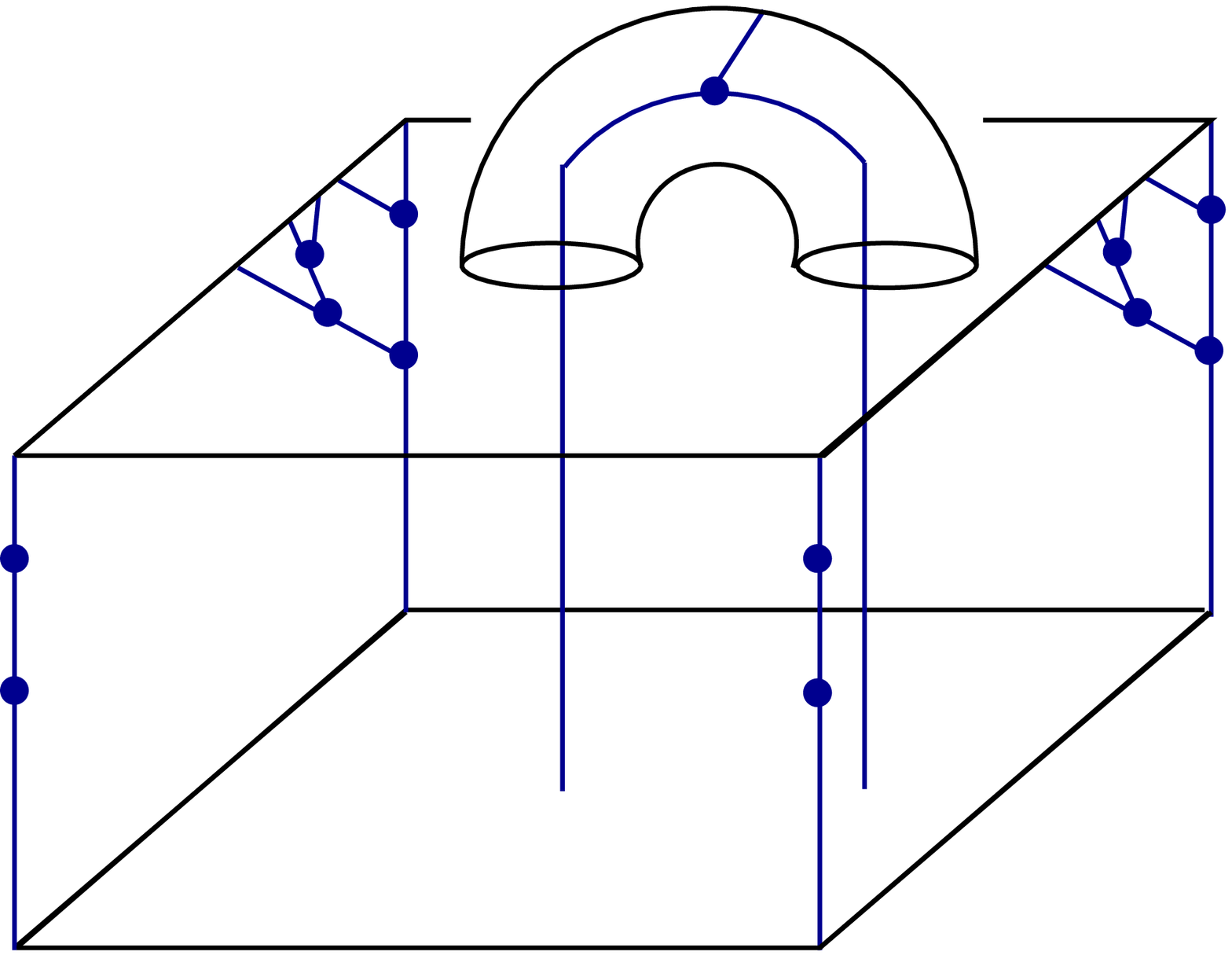_t}}
\caption{An example of the graph $T$ in a compressionbody $M$ with $\boundary_- M$ a torus and $\boundary_+ M$ a genus two surface. The vertical sides of the cube should have opposite faces identified.}
\label{Fig: Cbdy With Graph}
\end{figure}
\end{center}

Figure \ref{Fig: Cbdy With Graph 2} shows the same compressionbody $M$, but with a different graph $T$. In this case, $T_0$ consists of two edges, one of which has a single endpoint on $\boundary_- M$. $T_s$ consists of two edges. One edge joins the interior vertex of $T_0$ to $\boundary_+ M$ and the other joins the interior of the edge which forms the loop in $T_0$ to $\boundary_+ M$. $T_v$ is the same as in the previous example. 

\begin{center}
\begin{figure}[ht]
\scalebox{0.5}{\input{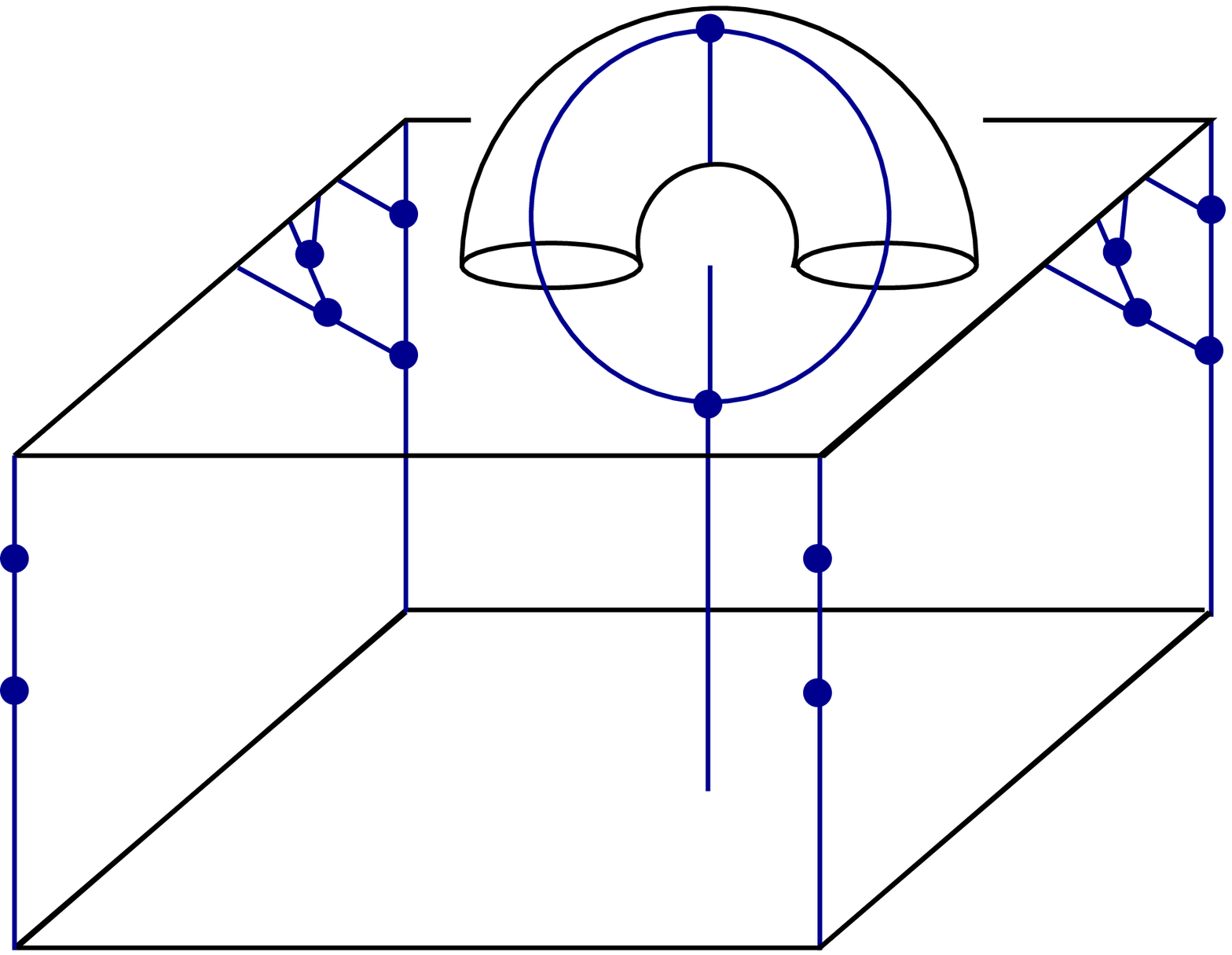_t}}
\caption{An example of the graph $T$ in a compressionbody $M$ with $\boundary_- M$ a torus and $\boundary_+ M$ a genus two surface. The vertical sides of the cube should have opposite faces identified. }
\label{Fig: Cbdy With Graph 2}
\end{figure}
\end{center}

\section{Preliminary Lemmas}

\subsection{Resolving pod handles}

Suppose that $\tau$ is a trivially embedded graph in a compressionbody $C$. Although we allow $\tau$ to contain pod handles, we begin by showing that, for the purposes of proving the theorem, certain pod handles can be eliminated.

Suppose that $h$ is the handle of a pod $p$. Let $\rho$ be a regular neighborhood of $h$ in $C$ and notice that $\boundary \rho$ intersects each pod leg of $p$ once. For each pod leg, $\lambda$, choose a path $h_\lambda \subset \boundary \rho$ from $\lambda \cap \boundary\rho$ to $\boundary_- C$ so that $h_\lambda$ is parallel to $h$ and so that the collection $\{h_\lambda\}$ for legs $\lambda$ of $p$ is pairwise disjoint. Let $\tau'$ be the result of removing $\tau \cap \rho$ from $\tau$ and adding the union of the $h_\lambda$. Notice that $\tau'$ is trivially embedded in $C$. We say that $(C,\tau')$ is obtained by \defn{resolving} the pod handle $h$. See Figure \ref{Fig: Resolving Pod Handles}.

\begin{center}
\begin{figure}[ht]
\scalebox{0.5}{\input{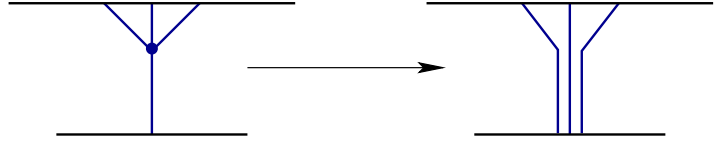_t}}
\caption{Resolving a pod handle in a compressionbody.}
\label{Fig: Resolving Pod Handles}
\end{figure}
\end{center}

If $T$ is a graph properly embedded in $M$ and if $H$ is a Heegaard surface for $(M,T)$, then we can obtain a new graph $T' \subset M$ in $M$ by resolving one or more of the pod handles in either or both compressionbodies of $\cls(M - H)$. $H$ is still a Heegaard surface for $(M,T')$. 

\begin{lemma}\label{Lem: Resolving Pod Handles 1}
Suppose that $(M,T')$ is obtained from $(M,T)$ by resolving a pod handle of $T$ in $M - H$ which is adjacent to $\boundary_- M$. If $(M,T)$ satisfies the hypotheses of Theorem \ref{Main Theorem}, then so does $(M,T')$.
\end{lemma}

\begin{proof}
Let $h$ be a handle for a pod $\tau$ in $M - H$ which is adjacent to $\boundary_- M$. We can think about resolving it to a collection of vertical arcs $\tau'$ by shrinking $h$ to $\boundary_- M$. Combining this shrinking with the isotopy of $\boundary_+ M - \inter{\eta}(T)$ to $\boundary_- M \cup T$ gives an isotopy of $\boundary_+ M - \inter{\eta}(T')$ to $\boundary_- M \cup T'$.
\end{proof}

\begin{lemma}\label{Lem: Resolving Pod Handles 2}
Suppose that $(M,T)$ satisfies the hypotheses of Theorem \ref{Main Theorem} and that $(M,T')$ is obtained from $(M,T)$ by resolving a pod handle of $T$ in $M - H$ which is adjacent to $\boundary_- M$. If $(M,T')$ satisfies the conclusion of Theorem \ref{Main Theorem}, then so does $(M,T)$.
\end{lemma}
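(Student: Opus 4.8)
The plan is to verify, one alternative at a time, that if $(M,T')$ satisfies one of the six conclusions of Theorem~\ref{Main Theorem} then so does $(M,T)$. The key preliminary observation, read off from the proof of Lemma~\ref{Lem: Resolving Pod Handles 1}, is that resolving the handle $h$ is a purely local modification. Because $h$ is adjacent to $\boundary_- M$, it lies in the interior of one of the compressionbodies $C_1, C_2$, so a regular neighborhood $B$ of $h$ is a ball that meets $\boundary_- M$ in a disc, is disjoint from $H$, has $T$ and $T'$ agreeing outside it, and contains both $T\cap B$ and $T'\cap B$ as unknotted tangles. Consequently, the bridge arcs, bridge pods, and complete collections of bridge discs for $T-H$ and for $T'-H$ differ only inside $B$, and any arc or disc associated with $T'$ that lies outside $B$ is associated in the same way with $T$.

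Alternative~(5) cannot occur for $(M,T')$: it would require $M=B^3$, but $M$ has the nonempty boundary component $\boundary_- M$ to which $h$ is adjacent. For alternatives~(1) and~(3) the witnessing data are a pair of compressing discs for $H$, respectively a pair of bridge discs for $T_1'$ and $T_2'$, each with boundary on $H\cup T'$ and hence off the disc $B\cap\boundary_- M$. Since the interior of $B$ is a ball disjoint from $T'$, a standard innermost-disc argument isotopes these discs off $B$ while keeping them disjoint from $T'$; they are then disjoint from $T$ as well and exhibit the same alternative for $(M,T)$. Alternative~(6) is treated in the same spirit, now for an isotopy rather than discs: if $M=\boundary_- M\times I$ and $H-\inter{\eta}(T')$ is isotopic to $\boundary_+ M-\inter{\eta}(T')$ in $M-\inter{\eta}(T')$, then, $B$ lying in a collar of a component of $\boundary_- M$, one arranges that isotopy to be supported away from $B$, after which it is at the same time an isotopy of $H-\inter{\eta}(T)$ to $\boundary_+ M-\inter{\eta}(T)$ in $M-\inter{\eta}(T)$.

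For alternative~(4), let $\zeta\subset T'$ be a removable path disjoint from $\boundary_+ M$. If $\zeta$ misses $B$, then $\zeta\subset T$ and the conditions (RP1)--(RP4) transfer directly, using the correspondence of bridge discs above and the fact that the compressing disc $E$ occurring in (RP3) and (RP4), whose boundary lies on $H$, can be pushed off $B$. If $\zeta$ meets $B$, then it runs along parts of $T'$ created by the resolution, all adjacent to $\boundary_- M$; unresolving reroutes these parts through the handle $h$ of the pod $p$, and I would check that the resulting path in $T$ is again a removable path disjoint from $\boundary_+ M$. There remains alternative~(2): here $H$ is a boundary stabilization of a splitting $H_0$ of a smaller pair, namely an amalgamation of $H_0$ with a minimal-genus type~II splitting of a collar $F\times[-1,0]$ of a component $F$ of $\boundary_- M$ carrying some prescribed vertical arcs. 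The plan is to choose this collar either to be disjoint from $B$ or to contain $B$, perform the unresolving inside whichever piece ($M_1$ or the collar) contains $B$, check that the unresolved graph is still trivially embedded in the relevant compressionbody, and conclude that $H$ is a boundary stabilization for $(M,T)$ as well.

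The step I expect to be the main obstacle is alternative~(2): unlike the others it is not merely the existence of discs or an isotopy that can be shoved off the ball $B$, but a statement about the global decomposition of $H$ as an amalgamation, so one must arrange that decomposition compatibly with the region where $T$ and $T'$ differ (and, in the case where $B$ sits inside the collar, verify that replacing some vertical arcs by the pod $p$ still yields a trivially embedded graph there). The subcase of~(4) in which $\zeta$ actually enters $B$ is the other point needing real verification, since there the graph changes along the path itself rather than only in a neighborhood disjoint from it.
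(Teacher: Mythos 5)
Your cases (1), (3), (5) and (6) follow the paper's line (push the witnessing discs off $\rho=\eta(h)$ by an innermost disc/outermost arc argument, using that $\boundary\rho-\cup h_\lambda$ is a disc; for (6) the paper instead just observes that $H$ parallel to $\boundary_+ M$ forces $(C_2,T'_2)=(\boundary_+ M\times I,\text{points}\times I)$ and $h\subset C_1$, so $(C_2,T_2)=(C_2,T'_2)$). But the two places you yourself flagged as needing work are exactly where your plan breaks, and the underlying reason is a conceptual one: the lemma only asks that $(M,T)$ satisfy \emph{some} conclusion of Theorem \ref{Main Theorem}, not the same one as $(M,T')$, and in the two hard cases the alternative genuinely changes. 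For alternative (2), if the boundary stabilization of $(M,T')$ is along the very component $F$ of $\boundary_- M$ adjacent to $h$, there is no way to run your plan: by definition a boundary stabilization requires the graph in the collar $F\times[-1,0]$ to be a collection of \emph{vertical arcs} each meeting the type II surface exactly twice, and after unresolving, the pod vertex and handle sit in that region, so the amalgamation no longer exhibits $H$ as a boundary stabilization of $(M,T)$ -- the issue is not trivial embeddedness of the unresolved graph. The paper's proof does something different: it builds a square $V$ with sides on $F$, $H$, the amalgamating vertical arc $\psi$, and an edge $e$ of $T'$ containing an $h_\lambda$, arranges $V$ to contain a second edge $e'$ containing an $h_{\lambda'}$, performs the merger of $e$ and $e'$ into the pod inside $V$, and extracts from $V$ a bridge disc meeting a bridge disc for the pod in one point, i.e.\ it concludes that $H$ is \emph{perturbed} as a splitting of $(M,T)$. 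Your proposal, which aims to re-prove boundary stabilization for $(M,T)$, would never reach this conclusion.

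The subcase of alternative (4) in which $\zeta$ meets $\rho$ has the same flavor, and your ``I would check that the resulting path in $T$ is again a removable path'' is both unverified and not what actually happens. Since resolving $h$ produces several vertical edges, either exactly one or exactly two of them can lie in $\zeta$. If exactly one does, the paper concludes that $H$ is perturbed as a splitting of $(M,T)$, by the same mechanism as in case (2); the rerouted path through the pod vertex and handle is not shown (and not claimed) to be removable. If two of them lie in $\zeta\cap\rho$, then the component of $T$ containing $h$ is a loop $\gamma$ together with $h$, and the paper produces cancelling discs $\{D_1,D_2\}$ for $\gamma$ with $D_1$ the bridge disc for the bridge arc of $\zeta-H$ and $D_2$ disjoint from the disc $E$ of (RP4), so that $\gamma$ is a removable \emph{cycle} in $T$ -- again a different shape of conclusion from the removable path with endpoints on $\boundary_- M$ that you set out to recover. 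So the gap is concrete: in both (2) and the $\zeta\cap\rho\neq\nil$ part of (4) you need to abandon the attempt to preserve the given alternative and instead construct an explicit perturbing pair (or cancelling pair witnessing a removable cycle), which is the actual content of the paper's proof.
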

\begin{proof}
Let $h$ be the handle of the pod $\tau \subset T$ which is resolved. Suppose that $(M,T)$ satisfies the hypotheses of Theorem \ref{Main Theorem}, and that $(M,T')$ satisfies the conclusion of Theorem \ref{Main Theorem}. Let $\rho = \eta(h)$ and let $h_\lambda$ be the arcs from the definition of handle resolution.

\begin{enumerate}
\setlength{\parskip}{6.6pt}
 
\item Suppose that $H$ is stabilized as a splitting of $(M,T')$. Let $S$ be a sphere in $M$ disjoint from $T'$ intersecting $H$ in a single simple closed curve. Out of all such spheres, we may assume that $S$ has been chosen to minimize $|S \cap \boundary \rho|$. Since $\boundary \rho - \cup h_\lambda$ is a disc, an innermost disc argument shows that $S$ is disjoint from $\rho$. Hence, $H$ is stabilized as a splitting of $(M,T)$.

\item Suppose that $H$ is boundary-stabilized as a splitting of $(M,T')$. If the boundary stabilization is along a component of $\boundary_- M$ not adjacent to a resolved pod handle, then the splitting of $(M,T)$ is boundary stabilized.

Suppose, therefore,  that $h$ is adjacent to the component $F$ of $\boundary_- M$ along which $H$ is boundary stabilized as a splitting of $(M,T')$. Let $\psi$ be the vertical arc (not in $T_v$) along which the boundary stabilization was performed. An innermost disc/outermost arc argument shows that the compressing disc for $H$ which is a meridian of $\eta(\psi)$ is disjoint from $\rho = \eta(h)$. Thus, $\psi$ is disjoint from $\rho$.

Consider a square $V$ in $M$, with one edge of its boundary on $F$, one edge on $H$, one edge on $\psi$, and one edge on an edge $e$ of $T'$ containing an $h_\lambda$. We may arrange for the interior of $V$ to intersect $H$ in a single arc and for $V$ to contain an edge $e' \neq e$ which contains an $h_{\lambda'}$. To reconstruct $\tau$ from $e$, $e'$, and possibly other vertical arcs, $h_{\lambda}$ and $h_{\lambda'}$ (and possibly other arcs) are merged into a single arc. We may perform this merger within $V$. Suppose that $C_i$ contains $F$ and that $C_j$ is the other compressionbody of $M - H$. The intersection of the component of $V - T'$ with $C_j$ is a bridge disc for a component of $e' \cap C_j$. This bridge disc intersects a bridge disc for the pod $\tau$ in $V$ in a single endpoint, showing that $H$ is perturbed as a splitting of $(M,T)$. See Figure \ref{Fig: PodHandlesBdyStab}. 

\begin{center}
\begin{figure}[ht]
\scalebox{0.5}{\input{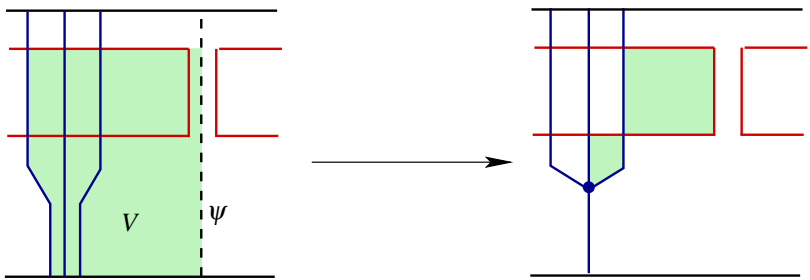_t}}
\caption{Undoing a handle resolution may result in a perturbation.}
\label{Fig: PodHandlesBdyStab}
\end{figure}
\end{center}

\item Suppose that $H$ is perturbed as a splitting of $(M,T')$. An argument similar to the case when $H$ is stabilized as a splitting of $(M,T')$ shows that $H$ is also perturbed as a splitting of $(M,T)$.

\item Suppose that $T'$ contains a removable path $\zeta$ disjoint from $\boundary_+ M$. If the endpoints of $\zeta$ are identified in $T'$, $\zeta$ is also a removable path in $T$. Suppose, therefore, that the endpoints of $\zeta$ lie on $\boundary_- M$.

If $\zeta$ is disjoint from $\rho$, then $\zeta$ continues to be a removable path in $T$. Assume that $\zeta$ is not disjoint from $\rho$. Resolving $h$ results in a collection of vertical edges. At most two of these vertical edges lie in $\zeta$. 

If exactly one edge $h_\lambda$ lies in $\rho$, then $T$ is perturbed as in Case (2). Suppose, therefore, that $h_\lambda \neq h_{\lambda'}$ both lie in $\zeta \cap \rho$. Then the component of $T$ containing $h$ consists of a loop $\gamma$ and the edge $h$. There are cancelling discs $\{D_1,D_2\}$ for $\gamma$ such that $D_1$ is a bridge disc for the bridge arc component of $\zeta - H$. The disc $D_2$ is disjoint from the disc $E$ in (RP 4).  See Figure \ref{Fig: PodHandlesRemovablePath}.

\begin{center}
\begin{figure}[ht]
\scalebox{0.5}{\input{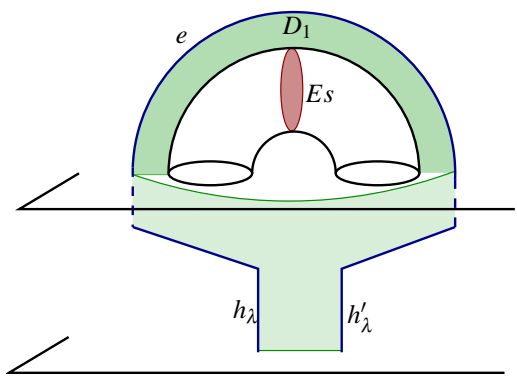_t}}
\caption{Undoing a handle resolution causes removable paths to persist. A portion of the lower green disc becomes a bridge disc after undoing the pod resolution.}
\label{Fig: PodHandlesRemovablePath}
\end{figure}
\end{center}

\item If $M$ is a 3--ball, then $\boundary_- M = \nil$ and $T = T'$.

\item Suppose that $M = \boundary_- M \times I$, that $H$ is isotopic in $M - \inter{\eta}(T')$ to $\boundary_+ M - \inter{\eta}(T')$ and that each edge of $T'$ intersects $H$ no more than one time. Since $H$ is parallel to $\boundary_+ M$, $H$ separates $\boundary_+ M$ from $\boundary_- M$. Thus, $(C_2,T'_2) = (\boundary_+ M \times I,\text{points} \times I)$. Since $h \subset C_1$, we have $(C_2,T_2) = (C_2,T'_2)$. Thus, conclusion (6) holds for $(M,T)$.
\end{enumerate}
\end{proof}

\subsection{Puncturing Interior Monotonic Edges}
Suppose that $(M,T)$ satisfies the hypotheses of Theorem \ref{Main Theorem} and that $e$ is an interior monotonic edge of $T \subset M$. Then one endpoint $v$ of $e$ is in $C_1$. Let $M' = M - \inter{\eta}(v)$ and $T' = T \cap M'$. If there are no pod handles adjacent to $\boundary_- M$, then since $v \in C_1$, $H'$ is a Heegaard surface for $(M',T')$. It is clear that $(M',T')$ satisfies the hypotheses of Theorem \ref{Main Theorem}.

\begin{lemma}\label{Lem: puncturing mie}
Suppose that no pod handles of $T$ are adjacent to $\boundary_- M$. If $H$ as a splitting of $(M',T')$ satisfies the conclusion of Theorem \ref{Main Theorem}, then it also does so as a splitting of $(M,T)$.
\end{lemma}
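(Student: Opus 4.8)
The plan is to run through the six conclusions of Theorem~\ref{Main Theorem}: assume $H$, as a splitting of $(M',T')$, realizes one of them and deduce that $H$, as a splitting of $(M,T)$, realizes one as well. Set $S_v=\boundary\eta(v)$. Because $v$ lies in the interior of $C_1$, the sphere $S_v$ is a new component of $\boundary_-C_1'$; thus $\boundary M'=\boundary M\sqcup S_v$, the compressionbody $C_2$ is unchanged (write $C_2'=C_2$), $C_1'=C_1-\interior\eta(v)$, and $\eta(v)$ meets $T$ in an unknotted cone on $T\cap S_v$. Since $\eta(v)$ is merely a regular neighborhood of $v$, we are free to shrink it. Every edge of $T$ meeting $v$ enters $C_1$, so $v$ is a vertex of $T_1$ of valence $\geq 3$; since $\boundary_+M\subset C_2$ forces $\boundary_-C_1\subseteq\boundary_-M$, the standing hypothesis precludes a pod handle of $T_1$ at $v$, so $v$ is the cone point of a \emph{bridge} pod $P$ of $T_1$ with pod disc $D_P$. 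After puncturing, each leg of $P$ becomes an arc of $T'_1$ from $\boundary_-C_1'$ to $\boundary_+C_1'$, i.e.\ a vertical arc of $T'_1$; this is the only difference between $(M,T)$ and $(M',T')$.

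Conclusions (5) and (6) cannot hold for $(M',T')$: a $3$--ball has connected boundary, but $\boundary M'$ contains the two components $\boundary_+M$ and $S_v$; and $\boundary_-M'=\boundary_-M\sqcup S_v$ is disconnected, so the connected manifold $M'$ is not $\boundary_-M'\times I$. Conclusions (1) and (3) transfer verbatim: a compressing disc for $H$ lying in $C_1'$, or a bridge disc for an arc of $T'_1$, is properly embedded with interior disjoint from $\boundary_-C_1'\supseteq S_v$, hence disjoint from $\eta(v)$ and so from $T$; moreover every bridge arc of $T'_1$ is a bridge arc of $T_1$ (bridge arcs avoid $\boundary_-$, hence avoid the truncated legs). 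Thus a stabilizing pair, resp.\ a perturbing pair, of discs for $(M',T')$ is one for $(M,T)$.

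For conclusion (2), suppose $H$ is the amalgamation of a Heegaard surface $H_0$ for $(M',T')$ with a minimal--genus type II surface along a component $F$ of $\boundary_-M'$. If $F\subseteq\boundary_-M$, the amalgamation is supported in a collar of $F$ disjoint from $\eta(v)$; reinserting $\eta(v)$, the truncated legs of $P$ reassemble into the bridge pod $P$, so $H_0$ is a Heegaard surface for $(M,T)$ and the same amalgamation exhibits $H$ as boundary stabilized for $(M,T)$. If $F=S_v$, then after reinserting $\eta(v)$ the region $S_v\times[-1,0]\cup\eta(v)$ is a $3$--ball in which the amalgamated part of $H$ is a positive--genus Heegaard surface meeting $T$ in an unknotted system of arcs; such a surface carries a stabilizing pair of discs missing $T$, so $H$ is stabilized for $(M,T)$.

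For conclusion (4), let $\zeta'$ be a removable path of $T'$ disjoint from $\boundary_+M'=\boundary_+M$. If $\zeta'\cap\eta(v)=\nil$, then $\zeta'\subset T$ and, since the discs witnessing (RP 1)--(RP 4) for $\zeta'$ have interiors off $\boundary_-C_i'\supseteq S_v$, hence off $\eta(v)$, $\zeta'$ is a removable path of $T$ disjoint from $\boundary_+M$. If $\zeta'$ meets $S_v$: a cycle misses $\boundary M'$, so (RP 1) makes $\zeta'$ an arc with both endpoints on $\boundary M'$, at least one --- say $p$ --- on $S_v$, and the edge of $\zeta'$ at $p$ is a truncated leg of $P$ whose $C_1'$--part is a vertical arc of $T'_1$. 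Shrinking $\eta(v)$ to miss the isotopy of the observation following (RP 4), carry $\zeta'$ into a spine $Q$ of one side of $M'-H$; since $p\in S_v\subset\boundary_-C_1'$, that side is $C_1'$, so $Q$ contains a vertical arc of $T'_1$, contradicting condition~(6) in the definition of a spine. Hence $\zeta'$ cannot meet $S_v$, and the first case applies. (To sidestep the spine characterization: when both endpoints of $\zeta'$ lie on $S_v$, reinserting $\eta(v)$ closes $\zeta'$ into a cycle through $v$ whose intersection with $C_1$ is a union of two legs of $P$, with a bridge disc cut from $D_P$, which together with the bridge disc for the bridge arc of $\zeta'-H$ and the compressing disc from (RP 4) verifies (RP 1)--(RP 3); when exactly one endpoint lies on $S_v$, that leg, a second leg of $P$, and the bridge disc for the bridge arc of $\zeta'-H$ form a perturbing pair, so $H$ is perturbed for $(M,T)$.) I expect conclusion (4) to be the main obstacle, precisely because one must keep the spine-straightening isotopy away from $\eta(v)$ --- for which the freedom to shrink $\eta(v)$ is essential --- with a secondary point, behind conclusion (2), that a boundary stabilization along the puncture sphere $S_v$ is a stabilization; the parenthetical remarks serve as a safety net showing that even these configurations stay among the six conclusions.
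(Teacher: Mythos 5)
Your reduction of the problem to the six conclusions, and your handling of conclusions (1) and (3), match the paper. But two of your steps have real problems. First, your dismissal of conclusion (6) rests on the claim that $\boundary_- M' = \boundary_- M \sqcup \boundary\eta(v)$ is disconnected; this is false precisely when $\boundary_- M = \nil$, i.e.\ when $M$ is a handlebody, and in the case $M = B^3$ one genuinely has $M' \cong S^2 \times I = \boundary_- M' \times I$, so conclusion (6) for $(M',T')$ cannot be ruled out by counting boundary components. The paper excludes it differently: if $H - \inter{\eta}(T')$ were parallel to $\boundary_+ M' - \inter{\eta}(T')$ then $T'\cap C_2'$ would consist of vertical arcs, forcing both endpoints of $e \cap M'$ onto $\boundary M'$, contradicting that the unpunctured endpoint of the interior monotonic edge $e$ is an interior vertex. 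Some argument of this kind is needed; your proof simply omits the $B^3$ case.

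Second, and more seriously, your treatment of boundary stabilization along $F = \boundary\eta(v)$ is a gap. The assertion that ``the amalgamated part of $H$ is a positive-genus Heegaard surface of the ball $\eta(v)\cup (S_v\times[-1,0])$ meeting $T$ in an unknotted system of arcs, hence carries a stabilizing pair of discs missing $T$'' does not hold up as stated: the minimal genus type II surface over a sphere has genus $0$ (two spheres tubed once), the portion of the amalgamated surface inside that ball is not a closed Heegaard surface of the ball (the amalgamation tubes exit it), and the final claim --- that such a splitting of (ball, trivial arcs) automatically admits a stabilizing pair \emph{disjoint from} $T$ --- is exactly the kind of nontrivial classification statement that the Hayashi--Shimokawa machinery exists to prove, not something one can assert in a line. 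The paper instead argues as in Lemma \ref{Lem: Resolving Pod Handles 2}(2): using the vertical tube arc $\psi$ of the boundary stabilization and a square $V$ with sides on $F$, $H$, $\psi$ and a leg of the pod, one extracts bridge discs on opposite sides of $H$ meeting in a single point of $T\cap H$, so that $H$ is \emph{perturbed} as a splitting of $(M,T)$ (conclusion (3)), with no appeal to a stabilization. Finally, in conclusion (4) your primary argument that $\zeta'$ cannot meet $S_v$ misuses the spine condition: the isotopy of \cite[Lemma 3.3]{STo} moves $\zeta'$ into a spine, so the straightened path is no longer a vertical arc of $T'$ and there is no conflict with condition (6) of the spine definition. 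Your parenthetical fallback (closing $\zeta'$ through $v$ into a cycle with a cancelling pair built from the pod disc, or extracting a perturbing pair when only one endpoint lies on $S_v$) is essentially the paper's actual argument and is the part to keep, but it needs the disjointness checks against the complete collection $\Delta$ and the disc $E$ of (RP\,4) spelled out rather than asserted.
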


\begin{proof}
Let $C'_i$ for $i = 1,2$ denote the compressionbodies into which $H$ splits $M'$. If $H$ is stabilized or perturbed as a splitting of $(M',T')$ then it is stabilized or perturbed as a splitting of $(M,T)$ since any disc which is essential in $C'_i$ is also essential in $C_i$.

Suppose that $H$ is boundary-stabilized as a splitting of $(M',T')$ and let $F$ be the component of $\boundary_- M'$ along which the boundary-stabilization was performed. If $F \neq \boundary \eta(v)$, then $H$ is boundary-stabilized as a splitting of $(M,T)$. If $F = \boundary \eta(v)$, then an argument similar to that of Lemma \ref{Lem: Resolving Pod Handles 2} (Resolving Pod Handles) shows that $H$ is perturbed as a splitting of $(M,T)$. 

Suppose that $\zeta$ is a removable path in $T'$. If $\zeta$ is disjoint from $\boundary \eta(v)$, then $\zeta$ remains a removable path in $T$. Suppose, therefore, that $\zeta$ has both endpoints on $\boundary \eta(v)$. Let $D$, $\Delta$, and $E$ be the discs from (RP 4). Choose a vertical disc $V$ in $C_1$ which is disjoint from $\Delta \cup E$ and which contains the edges $\zeta \cap C_1$. Let $V' \subset \eta(v)$ be a bridge disc for $T \cap \eta(v)$ such that $V \cap \boundary \eta(v) = V' \cap \boundary \eta(v)$. Let $D_1 = D$ and $D_2 = V \cup V'$. Let $\zeta_T$ be the loop in $T$ containing $\zeta$. The pair $\{D_1,D_2\}$ is a cancelling pair for $\zeta_T$ which is disjoint from $\Delta$ and which intersects $E$ exactly once. Thus, $\zeta_T$ is a removable loop in $T$.

It is impossible for $M'$ to equal $B^3$. If $M' = \boundary_- M' \times I$, then $M' = S^2 \times I$ since $\boundary \eta(v) \subset \boundary_- M'$. If $H - \inter{\eta}(T')$ is isotopic in $M - \inter{\eta}(T')$ to $\boundary_+ M' - \inter{\eta}(T')$, then both endpoints of $e \cap M'$ lie on $\boundary M'$. This contradicts the fact that $e$ is an interior monotonic edge.
\end{proof}

We say that $(M',T')$ is obtained by \defn{puncturing} an interior monotonic edge of $(M,T)$.

\subsection{Essential Surfaces in (Compressionbody, Trivial Graph)}
\begin{proposition}[{cf. \cite[Lemma 2.4]{HS1}}]\label{Prop: surface classification}
Let $T$ be a graph trivially embedded in a compressionbody $C$. Suppose that $F$ is a compact embedded surface in $C$ such that $F - T$ is connected. Suppose also that $\boundary F \subset (\boundary C \cup T)$, $(F \cap \boundary C) \subset \boundary F$, and $F \cap T$ is the union of edges of $T$. If $F$ is $T$--incompressible and $T$--$\boundary$--incompressible, then $F$ is one of the following:
\begin{enumerate}
\item a sphere disjoint from $T$
\item a properly embedded disc $D \subset C$ which is disjoint from $T$
\item a properly embedded disc $D$ such that $\boundary D \subset \boundary_+ C$ and $D \cap T$ is a single pod leg
\item a properly embedded disc $D$ such that $\boundary D \subset \boundary_- C$ such that $D \cap T$ is a single pod handle
\item a bridge disc
\item a vertical disc $D$ such that $\boundary D \cap T$ has two components. Each component is either a vertical edge or a pod leg and an adjacent pod handle
\item a vertical annulus $A$ such that $A \cap T$ is either empty, consists of a vertical edge, or consists of a pod handle and one or two pod legs.
\item one of types (6) or (7) that also contains some number of pod handles but none of their adjacent pod legs
\item a closed component parallel to $\boundary_- \tild{C}$.
\end{enumerate}
\end{proposition}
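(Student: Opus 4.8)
The strategy is the standard one for results of this type (cf.\ \cite[Lemma 2.4]{HS1}): use a complete disc system to reduce to a model piece, cut that piece further along the bridge discs, pod discs, and meridians of vertical edges and handles until no part of the graph is left, invoke the classical classification of incompressible and boundary--incompressible surfaces in a ball and in a product, and then reassemble. So I would fix a complete collection $\Delta$ of discs for $C$, disjoint from $T$, witnessing that $T$ is trivially embedded, so that each component $C'$ of $C$ cut along $\Delta$ is either a $3$--ball or $\Sigma\times I$ for $\Sigma$ a component of $\boundary_- C$, with $\tau := T\cap C'$ trivially embedded there. Working with $F_T=\cls(F-\inter{\eta}(T))$ inside $C_T=C-\inter{\eta}(T)$, isotope it to meet $\Delta$ minimally. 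An innermost circle of $F\cap\Delta$ on $\Delta$ bounds a subdisc disjoint from $T$; if the circle is essential in $F-T$ this is a $T$--compressing disc, contradicting $T$--incompressibility, and otherwise it is removed using irreducibility of compressionbodies. An outermost arc of $F\cap\Delta$ on $\Delta$ cuts off a subdisc whose boundary is the union of an arc of $F_T$ and an arc of $\boundary_+C$; since $\Delta$ is disjoint from $T$ this is a $T$--$\boundary$--compressing disc for $F$ unless the relevant subarc of $\boundary F_T$ is inessential, so by $T$--$\boundary$--incompressibility and minimality the intersection can be removed (or $F$ is seen directly to be a boundary--parallel disc, type (2)). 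Hence we may assume $F\subset C'$.

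In the model piece $C'$ I would introduce the auxiliary disc system $\mc D$ consisting of the bridge discs and pod discs of $\tau$, the meridians of the vertical edges, and the meridians of the pod handles; after also splitting $\Sigma\times I$ along discs separating its vertical constituents, cutting $C'$ along $\mc D$ yields pieces that are $3$--balls or products containing no part of the graph. Put $F$ in minimal position with respect to $\mc D$ and run the same innermost--disc, outermost--arc analysis, now sorting outermost arcs according to which portion of the boundary of a disc of $\mc D$ they lie on --- a bridge edge, a pod leg, a pod handle, a meridian, $\boundary_+C$, or $\boundary_-C$. This case division is precisely what generates the items on the list: an outermost arc running along a pod leg forces $F$ to contain a pod--leg disc (contributing to types (3) and (5)), one running along a pod handle forces a pod--handle disc (types (4) and (8)), one running along a vertical edge or handle meridian forces a vertical annulus or vertical disc carrying that edge or handle (types (6), (7), (8)), and in each case one must check that a surface which is $T$--incompressible and $T$--$\boundary$--incompressible cannot be cut down any further. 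Once $F$ is disjoint from $\mc D$ it lies in a ball or product with empty graph, where the classical classification applies: boundary--parallel discs in the ball, and vertical annuli, horizontal copies of $\Sigma\times\{\mathrm{pt}\}$, and boundary--parallel discs in $\Sigma\times I$ (types (1), (2), (6), (7), (9)).

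Finally I would reassemble. Regluing along $\mc D$ and then along $\Delta$, each surface from the model list maps to one of the listed types: a horizontal surface becomes a component parallel to $\boundary_-\tild C$ (type (9)); a disc abutting a pod disc is capped by a subdisc of that pod disc to become a disc meeting $T$ in a single pod leg (type (3)) or pod handle (type (4)); a vertical disc or annulus abutting a meridian acquires that edge or handle in its intersection with $T$ (types (6)--(8)); and one verifies that $F$ is $T$--$\boundary$--compressible in $C$ exactly when the corresponding model surface was boundary--compressible away from $\mc D\cup\Delta$, so nothing outside the stated list survives. The main obstacle I anticipate is the bookkeeping of the second paragraph: there are many ways $F$ can run along the pods, and the case (8) surfaces --- which carry some pod handles but none of the adjacent pod legs --- must be carefully separated from the cases where a surface is genuinely boundary--compressible; a secondary subtlety is that compressing discs for $\boundary_+C$ need not be isotopic off $\Delta$, so they have to be recognized as type (2) directly rather than pushed into a model piece.
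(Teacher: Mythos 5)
Your outline follows the classical cut-and-reassemble template, but it is not the paper's argument and, as written, it has a genuine gap: the entire case analysis that is supposed to produce types (3)--(8) is asserted rather than proved. The sentence claiming that sorting outermost arcs ``is precisely what generates the items on the list'' is exactly the content of the proposition, and it is also where the only genuinely new difficulty relative to \cite[Lemma 2.4]{HS1} lives, namely pod handles. Nothing in your sketch explains, for instance, why a pod leg can occur in the interior of $F$ only together with an adjacent pod handle, why the annuli of type (7) carry at most one handle and at most two of its legs, or how the type (8) surfaces (handles but none of their adjacent legs) are separated from boundary-compressible configurations --- you flag this last point as an anticipated obstacle rather than resolving it. There is also a technical problem with the reduction itself: your auxiliary system $\mc{D}$ contains bridge discs and pod discs, which meet $T$, so an outermost subdisc cut off on such a disc has part of its boundary on $T$ and is therefore not a $T$--$\boundary$--compressing disc in the sense used here (the arc $\delta$ must lie on $\boundary M$); the appeal to $T$--$\boundary$--incompressibility does not apply to it, and each such configuration needs its own argument. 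Finally, the reassembly claim that $F$ is $T$--$\boundary$--compressible in $C$ exactly when the model surface is compressible away from $\mc{D} \cup \Delta$ is unproved and not obvious, since compressions can cross the cutting system.

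The paper avoids this bookkeeping by reducing to \cite[Lemma 2.4]{HS1} instead of reproving it. Concretely: Claim 1 shows that a pod leg interior to $F$ forces the adjacent handle to lie in $F$ (otherwise the boundary of a regular neighborhood of a pod disc yields a $T$--$\boundary$--compression, except in the type (3) case); Claim 2 augments $T$ to a graph $T'$ by adding new pod legs lying inside $F$ (arcs of $F$ intersected with bridge discs), so that $F \cap T'$ has no valence-one vertices in the interior of $F$; then all pod handles are resolved to vertical arcs, producing a graph $T''$ without handles for which the components of $\cls(F - T')$ are still incompressible and $\boundary$--incompressible, so \cite[Lemma 2.4]{HS1} applies verbatim, and the list (1)--(9) is obtained by translating its six conclusions back through the handle resolution and the added legs. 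To salvage your route you would in effect have to redo the Hayashi--Shimokawa classification with handles present, i.e.\ supply the deferred case analysis; the shorter repair is to supply arguments playing the roles of Claims 1 and 2 and the handle-resolution step, and then quote \cite[Lemma 2.4]{HS1} as the paper does.
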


Figure \ref{Fig: EssentialSurfCBdy} shows a schematic representation of surfaces of type (7) and (8).
\begin{center}
\begin{figure}[ht]
\scalebox{0.5}{\input{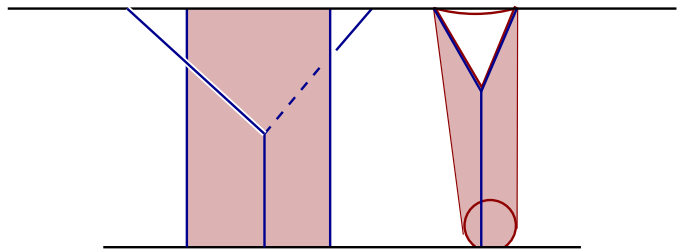_t}}
\caption{A disc of type (8) and an annulus of type (7) which contains a handle and two adjacent pod legs.}
\label{Fig: EssentialSurfCBdy}
\end{figure}
\end{center}
\begin{proof}
Let $F$ be a surface satisfying the hypotheses of the proposition and suppose that $F$ is not of type (3), (4), or (5).

\textbf{Claim 1:} $F \cap T$ contains a pod leg in its interior only if it also contains an adjacent pod handle.

Let $e$ be a pod leg of a pod $\tau \subset T$ such that $e \subset F$ and $F$ does not contain a handle for $\tau$. Let $D$ be a pod disc for $\tau$. The boundary $\delta$ of a small regular neighborhood of $D$ is a disc which intersects a handle for $\tau$ once. Either $F$ divides $\delta$ into two discs or $F \cap \delta = \nil$. In the latter case, $F \subset D$ and so $F$ is a bridge disc. In the former case, let $\delta'$ be the disc which is the closure of a component of $\delta - F$ not intersecting the handle for $\tau$. The disc $\delta'$ is a $T$--$\boundary$-compressing for $F$, unless $F$ is of type (3). \qed (Claim 1)

\textbf{Claim 2:} If $F \cap T$ contains a valence 1 vertex in its interior then there is a graph $T'$ in $C$ such that $C$, $T'$, and each component of $F - T'$ satisfy the hypotheses of the theorem. The graph $T'$ is obtained by introducing additional pod legs to $T$ which lie in $F$. $F \cap T'$ has no valence 1 vertices in the interior of $F$.

Any valence one vertex of $T \cap F$, by Claim 1, must be an endpoint of a pod handle in $F$. Let $h$ be the pod handle and $\tau$ be the pod containing $h$.  Let $\rho$ be a regular neighborhood of $h$ in $C$ and consider the punctured disc $Q = \boundary \rho - \tau$. The boundary of the disc lies in $\boundary_- C$ and has one puncture for each leg of the pod $\tau$. The intersection $Q \cap F$ consists of a single essential arc on $Q$. 

Let $p_1$ and $p_2$ be punctures in $Q$ on opposite sides of the arc $Q \cap F$. Let $e_1$ and $e_2$ be the pod legs of $\tau$ associated to $p_1$ and $p_2$. Let $\delta$ be the bridge disc containing them. An innermost disc/outermost arc argument shows that we may assume that $F \cap \delta$ consists of a single arc which has one endpoint at the pod vertex and the other at $\boundary_+ C$. 

Add the edge $F \cap \delta$ to $\tau$ creating a pod $\tau'$ with an additional pod leg. The discs $\delta - F$ are bridge discs for $\tau'$, showing that $\tau'$ is trivially embedded. Do this for each such $\tau$ to create the graph $T'$.  It is easy to see that $F$ is $T'$-incompressible and $T'$--$\boundary$-incompressible. Hence, $C$, $T'$, and each component of $\cls(F - T')$ satisfy the hypotheses of the proposition. \qed (Claim 2)

Let $F'$ be a component of the closure of $F - T'$, where $T'$ is the graph provided by Claim 2. Notice that, by Claims 1 and 2, each vertex of $F' \cap T'$ in the interior of $F'$ has valence at least 2. Let $T''$ be the graph created from $T'$ by resolving all the pod handles of $T$. Since $F' - T$ is connected, after a small isotopy of $F'$ to intersect a new vertical arc, rather than a pod handle, resolving the pod handles does not have any effect on $F'$ and the intersection betweeen $F'$ and $T'$ is the same as between $F'$ and $T''$.

Suppose that $\tau$ is a pod of $T$ having a handle $h$. Let $e_1, \hdots, e_n$ be the vertical arcs in $C$ created by resolving $h$. We think of the $e_i$ as lying in $\boundary \eta(\tau)$. Let $D$ be a pod disc for $\tau$. Let $\delta_1, \hdots, \delta_n$ be the vertical discs contained in $\boundary \eta(\tau \cup D)$ so that $\delta_i$ contains $e_i$ and $e_{i+1}$ (with indices mod $n$) in its boundary. By construction, $F''$ is disjoint from the interior of each $\delta_i$. The boundary of $F''$ may intersect $\boundary \delta_i$ along $e_i$ and $e_{i+1}$.

Suppose that $D$ is a $T''$-compressing or $\boundary$--compressing disc for $F'$. A standard innermost disc/outermost arc argument shows that we may assume that $D \cap \delta_i = \nil$ for all $i$. Then $D$ is a $T'$-compressing disc or $\boundary$--compressing disc for $F'$, a contradiction. Hence, $F'$ is $T''$ incompressible and $T''$--$\boundary$-incompressible.

Since $F' - T''$ is connected and not of type (3) or (5), by \cite[Lemma 2.4]{HS1}, $\cls(F' - T'')$ is one of the following:
\begin{enumerate}
\item[(A.)] a sphere disjoint from $T''$
\item[(B.)] a properly embedded disc disjoint from $T''$
\item[(C.)] a bridge disc for $T''$
\item[(D.)] a vertical disc such that $\boundary F' \cap T''$ contains two vertical arc components
\item[(E.)] a vertical annulus $A$ such that $A \cap T''$ is either empty or contains a vertical arc
\item[(F.)] a closed component parallel in $\tild{C}$ to $\boundary_- C$.
\end{enumerate}

If $F' - T''$ is of type (A.) or (B.), $F' - T'$ is a sphere or disc disjoint from $T'$ and, thus, $F$ is a sphere or disc disjoint from $T$.

If $F' - T''$ is of type (C.), then $F' - T'$ is a bridge disc for $T'$, since $F'$ does not contain a pod handle of $T'$. For the same reason, $F - T$ is a bridge disc for $T$.

If $F' - T''$ is of type (E.), then $T' \cap F'$ consists of at most one vertical arc and so either $F$ is of type (7) or (8). In the latter case, it contains exactly one pod handle.

If $F' - T''$ is of type (D.), then $F' - T'$, if it not the same as $F' - T''$, is obtained from $F' - T''$ by gluing together two copies of a pod handle in $\boundary F'$. Thus, $F' - T'$ is either a vertical disc or is a vertical annulus of type (7) which contains two pod legs. $T'$ was created from $T$ by attaching pod legs to pod handles in the interior of $F$ which were not adjacent to pod handles in $F$. If $F'$ is a vertical annulus containing a pod handle, the pod handle is adjacent to two pod legs in $F'$ and so if $F'$ is a vertical annulus, then $F$ is of type (7). 

By the previous observations, we may assume that each component of $F - T'$ is a vertical disc with interior disjoint from $T'$. The closure of $(F - T)$ is obtained by removing a pod leg of $T' \cap F$ which is the only pod leg in $F$ attached to a particular pod handle in $F$. Thus, $F$ is of type (6), (7), or (8).
\end{proof}

\subsection{Frohman's Trick}

We will frequently use a technique due to Frohman \cite{F} for determining if a Heegaard splitting is stabilized. Informally, if a spine for the compressionbody on one side of a Heegaard surface for $(M,T)$ contains a cycle which is contained in a 3--ball in $M - T$, then the Heegaard surface is stabilized. The version we will most often use is \cite[Lemma 4.1]{HS1}. We refer the reader to that Lemma for a precise description of how the trick works in our context.

\section{The complex $R$}\label{sec:R}

Suppose $M$ is a compressionbody and $T$ is a properly embedded graph so that $\boundary_+ M - \inter{\eta}(T)$ is isotopic to the frontier of a regular neighborhood of $\boundary_- M \cup T$. Furthermore, suppose $T$ satisfies the following conditions:

\begin{enumerate}
\item[(A)] Each edge of $T_0$ with zero or two endpoints on $\boundary_- M$ has at least one spoke attached to its interior.

\item[(B)] Each vertex of $T_0 - \boundary T_0$ has a spoke attached to it.

\item[(C)] Each component of $\boundary_- M$ is adjacent to at least one component of $T_v$.
\end{enumerate}

In this case we can construct a complex $R$ which has many useful properties, in particular $T \subset R$ and cutting $M$ along $R$ produces a collection of $3$-balls. Our complex $R$ generalizes the surface $R$ in \cite{HS1} and the surface $\mc{R}$ in \cite{HS2}. Our $R$ is considerably more complicated than either of those surfaces, so it is recommended that the reader understand the constructions in \cite{HS1} and \cite{HS2} before tackling this construction. The case when $M$ is a handlebody is also easier than when $M$ is a compressionbody. The first time reader may want to concentrate on that situation. The figures below often (but not always) depict the situation when each spoke has a single edge. The situation when spokes have multiple edges is not significantly more complicated, so we avoid cluttering the figures with unnecessary detail. In describing the construction we will often isotope $T$. The isotopies described are always ambient isotopies which never move $T$ through $H$.

Recall that if $M = B^3$, then $T = T_s$. In this case, let $D$ be a properly embedded disc in $M$, containing $T$ and let $R = D$. 

If $M \neq B^3$, for each edge of $T_0$ with zero or two endpoints on $\boundary_- M$ and for each spoke adjacent to the interior of the edge choose a disc which intersects the edge in exactly one point, contains a spoke adjacent to the interior of the edge, and is otherwise disjoint from $T$. See Figure \ref{Fig: Discs D}. Let $D$ be the union of these discs. For each disc $D'$ in $D$, choose a minimal path in $T_s \cap D'$ from $D' \cap T_0$ to $\boundary D'$. Call this path the \defn{distinguished path}.

\begin{center}
\begin{figure}[ht]
\scalebox{0.5}{\input{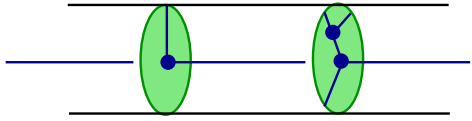_t}}
\caption{Two discs in $D$ adjacent to a single edge of $T_0$. Each contains a spoke. One spoke is a single edge and the other is a tree with four edges.}
\label{Fig: Discs D}
\end{figure}
\end{center}

If $M \neq B^3$, boundary reducing $M$ using $D$ creates a 3--manifold which is the union of $M_0 = \boundary_- M \times I$ and a collection of 3--balls. Let $\boundary_+ M_0 = \boundary_- M \times \{1\}$ and $\boundary_- M_0 = \boundary_- M \times \{0\}$. For a component $M'_0$ of $M_0$, let $\boundary_+ M'_0 = \boundary M'_0 \cap \boundary_+ M_0$ and let $\boundary_- M'_0 = \boundary M'_0 \cap \boundary_- M_0$. Let $D_+$ denote the discs in $D$ which are contained in $\boundary_+ M_0$.  

Each component of $T_0 \cap M_0$ is a tree with a single edge having an endpoint on $\boundary_- M$. Let $\tau$ be a component of $T_0 \cap M_0$ and let $h$ be the edge of $\tau$ with an endpoint on $\boundary_- M$. There is a properly embedded disc $E(\tau)$ with boundary in $\boundary_+ M$ which is inessential in $M$ such that $E(\tau) \cap T = E(\tau) \cap h$ is a single point that separates the point $\boundary h - \boundary_- M$ from all other vertices of $T$ on $h$. See Figure \ref{Fig: Disc E}.

\begin{center}
\begin{figure}[ht]
\scalebox{0.5}{\input{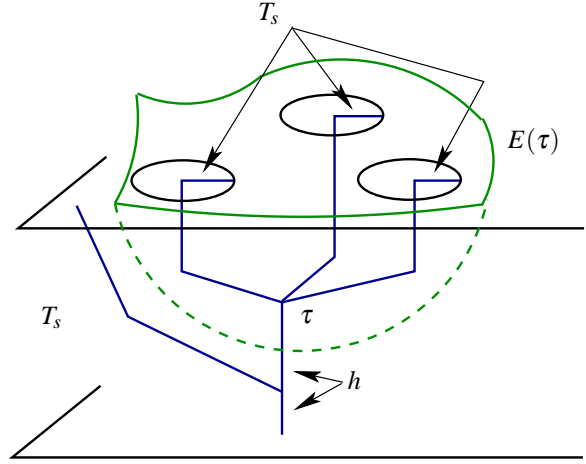_t}}
\caption{An example of the disc $E(\tau)$.}
\label{Fig: Disc E}
\end{figure}
\end{center}

If $M$ is a handlebody, let $\mb{E} = \nil$. Otherwise, let $\mb{E} = \bigcup_\tau E(\tau)$. Boundary reducing $M$ using $\mb{E}$ cuts off handlebodies $U$ which are disjoint from $\boundary_- M$. (If $M$ is a handlebody, then $U = M$.) $T_0 \cap U$ is a graph which is isotopic into $\boundary U$. Let $A'$ be an embedded 2--complex which is the union of squares in $U$. A square of $A' - T$ has two edges in $T$ and in most cases is a square of the form \[\Big(\text{edge of } (T_0 - T_s)\Big) \times I.\] The one exception is a square adjacent to some $h$. Such a square consists only of a $(\text{portion of } h) \times I$. We may isotope $A'$ so that for each disc $D'$ in $D$, $A' \cap D'$ is the distinguished path of $T_s \cap D'$. Each vertex of $T_0$ is adjacent to a spoke. Isotope each of these spokes to lie in $A'$ so that there is a minimal path in the spoke which is $(\text{vertex } \times I)$. Then $A' - T$ is the union of pairwise disjoint discs which meet in unions of edges of $T$.  If $\boundary_- M = \nil$ (i.e. if $M$ is a handlebody) we let $A = A'$ and $R = A \cup D$. Notice that in this case, there is no edge $h$.

Recall that each square of $A' \subset U$ is of the form \[\Big(\text{edge of } (T_0 - T_s)\Big) \times I.\] Suppose that $\tild{A}$ is such a square that is not adjacent to an edge $h$ of $T_0$ having one endpoint on $\boundary_- M$. Then one edge of $\boundary \tild{A}$ lies on an edge of $T_0$, two edges of $\boundary \tild{A}$ lie on distinct components of $T_s$, and one edge of $\boundary \tild{A}$ lies on $\boundary_+ M$. $\tild{A}$ may contain edges of $T_s$ in its interior. No edge of $T_s$ interior to $\tild{A}$ has an endpoint interior to $T_0 \cap \boundary \tild{A}$.

Figure \ref{Fig: Rhandlebody} depicts the surface $R$ when $M$ is a genus 2 handlebody and $T_0$ is a $\theta$-graph.

\begin{center}
\begin{figure}[ht]
\scalebox{0.5}{\input{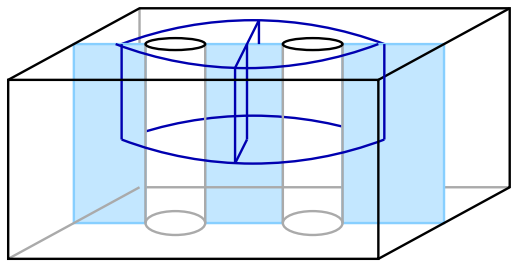_t}}
\caption{An example of the surface $R$ in a genus 2 handlebody. The discs $D$ are shaded. The surface $A$ is outlined.}
\label{Fig: Rhandlebody}
\end{figure}
\end{center}

If $\boundary_- M \neq \nil$ we still need to extend the squares of $A'$ adjacent to an edge $h$ of $T_0$ having an endpoint on $\boundary_- M$ to a surface with boundary on $\boundary M \cup T$. To that end, suppose that $\boundary_- M \neq \nil$. Let $M'_0$ be a component of $M_0$ and let $g = \genus(\boundary_+ M'_0)$. Since $M$ is a compressionbody, by construction, the complex $(A' \cup D) \cap \boundary_+ M'_0$ is contained in a disc in $\boundary_+ M'_0$. In $M'_0$ there exists, by assumption (C), a component of $T_v$. Choose a distinguished path $p$ in this component from $\boundary_- M'_0$ to $\boundary_+ M'_0$ and perform an isotopy so that $p$ is equal to $\{\text{point}\} \times I$ in $M'_0$. Choose loops $\gamma$ in $\boundary_+ M'_0$ based at $p \cap \boundary_+ M'_0$. If $\boundary_+ M'_0 = S^2$, then $\gamma$ should be a single loop cutting $\boundary_+ M_0$ into two discs. If $\boundary_+ M'_0 \neq S^2$, $\gamma$ should cut $\boundary_+ M'_0$ into a $4g$--gon. There is a collection of vertical annuli $P'_0$ in $M'_0$ so that $P'_0 \cap \boundary_+ M'_0 = \gamma$. The annuli intersect on $p$. Choose the curves $\gamma$ and the annuli $P'_0$ to be disjoint from the remnants of $D_+$ and $\mb{E}$. Also isotope $P'_0$ so that $P'_0 \cap T = T_v$ with the intersection of the annuli still $p$. Cutting $M'_0$ along $P'_0$ creates $G$ which is a $(4g-\text{gon})\times I$ if $\boundary_+ M'_0 \neq S^2$ and otherwise is the disjoint union of two copies of $D^2 \times I$. Let $Z_\pm = G \cap \boundary_\pm M'_0$. Thus, $Z_-$ and $Z_+$ are each a $4g$--gon. The polygon $Z_+$ contains $(A' \cup D) \cap \boundary_+ M'_0$. See Figure \ref{Fig: G1} for an example with $g = 1$. Let $P$ be the union of the $P'_0$ over all components $M'_0$.

\begin{center}
\begin{figure}[ht]
\scalebox{0.35}{\input{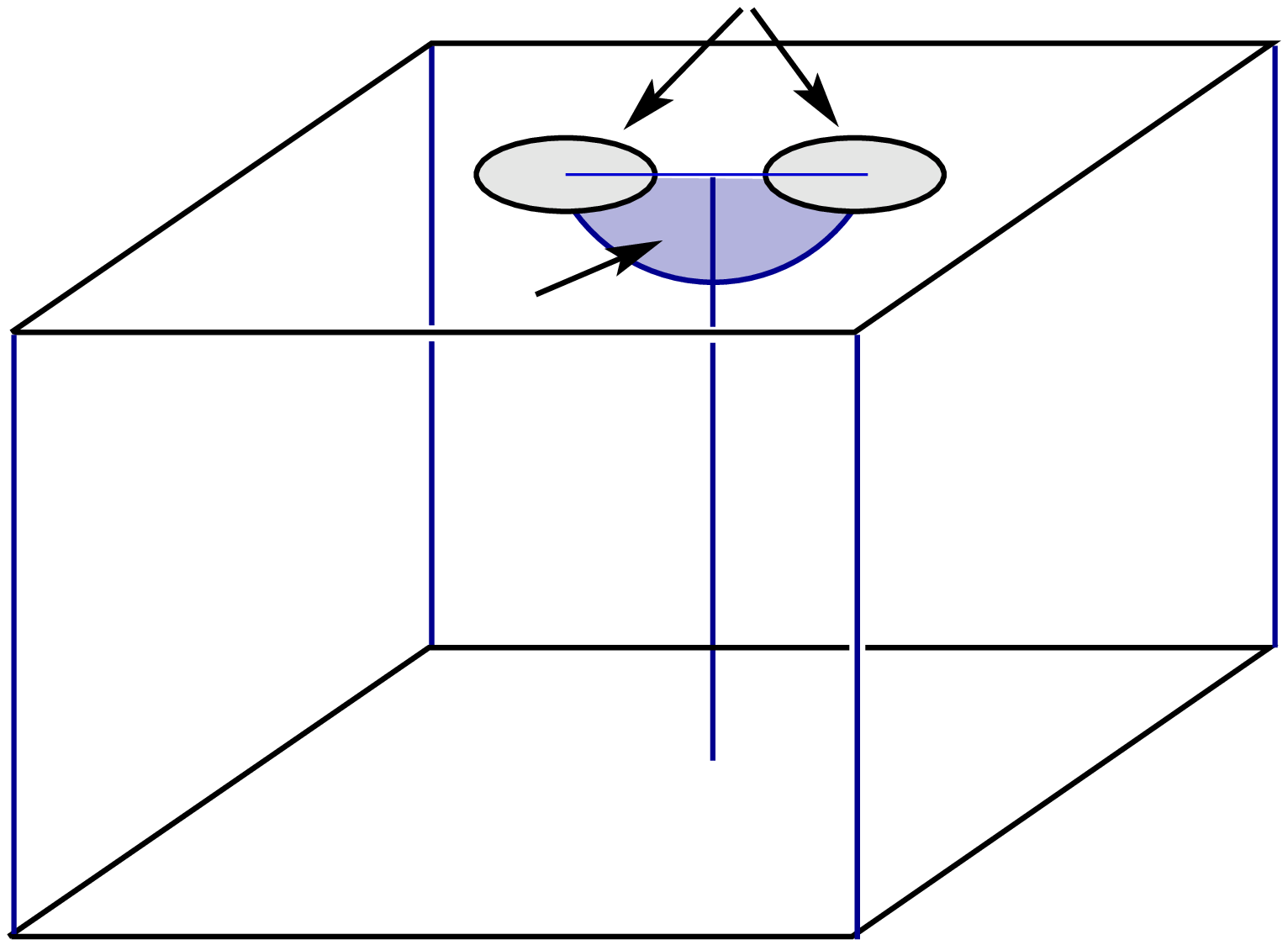_t}}
\scalebox{0.35}{\input{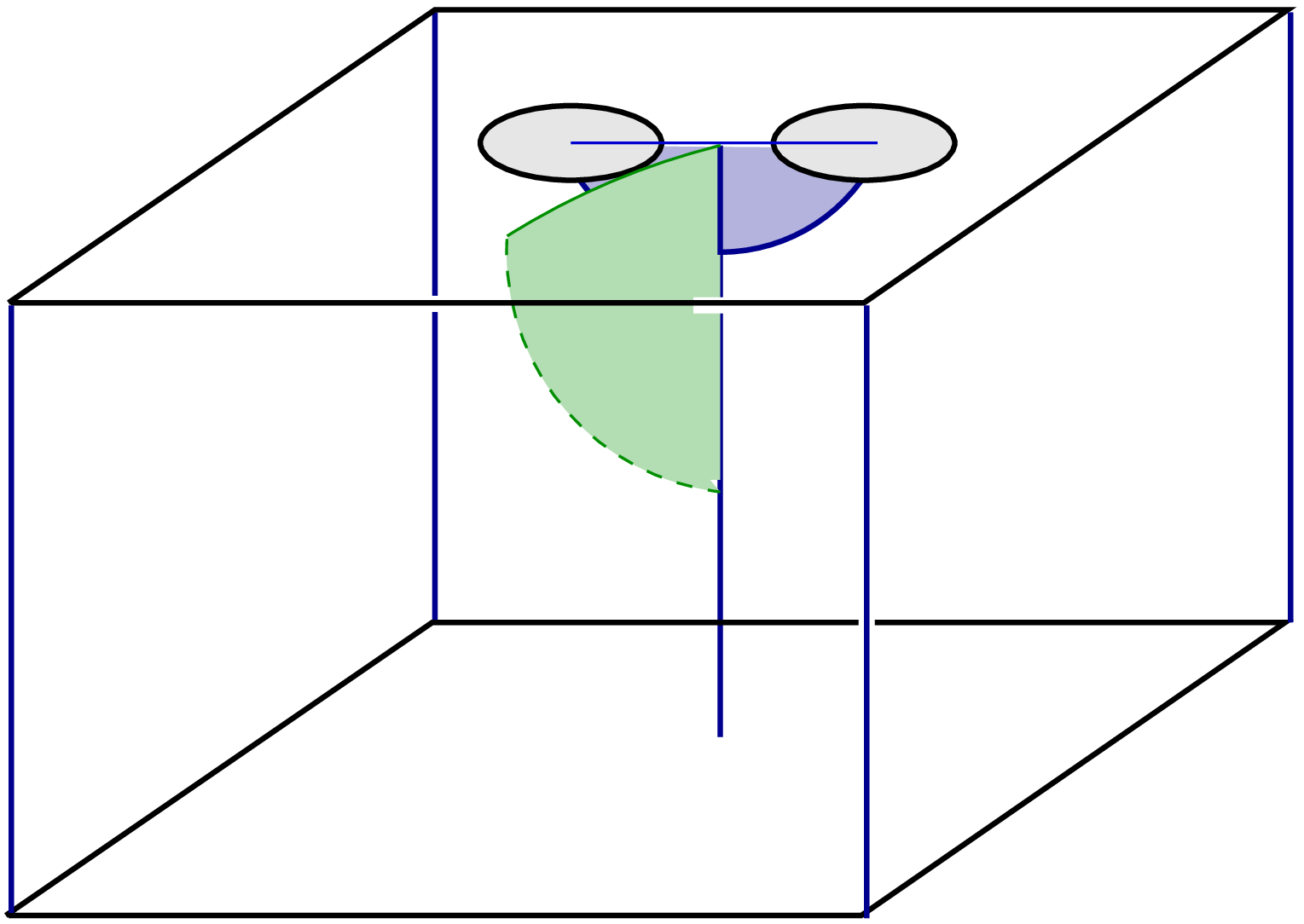_t}}
\caption{An example with $g=1$. The first picture does not show the portion of $A' \cap G$ adjacent to $h$. The disc $E$ is not pictured.}
\label{Fig: G1}
\end{figure}
\end{center}

The complex $A'$ contains portions of the edges of $T_0$ which have an endpoint on $\boundary_- M$. We now extend $A'$ to a complex which we write as $A \cup B$, so that $T$ is contained in $A \cup B \cup D \cup P$. Let $p_+$ be a vertex of the polygon $Z_+ \subset M'_0$. Of course, $p_+$ is identified with $p \cap \boundary_+ M'_0$ after gluing the $4g$--gon together to obtain $\boundary_+ M'_0$. Let $p_-$ be the vertex of $p \cap Z_-$ so that in $G$ there is a copy of $p$ joining $p_-$ to $p_+$. 

Consider an edge $h$ of $T_0 \cap M'_0$ having an endpoint on $\boundary_- M'_0$. In $A'$, there is one square having one side on a portion of $h$, one side on a spoke $\sigma$ adjacent to the vertex $\boundary h - \boundary_- M$, and one side on $\boundary_+ G'_0$. The fourth side of the square does not lie on either $T$ or $\boundary G$, but does lie in $E$. For such a square $S$, choose a path $\beta(S)$ joining the point $\boundary \sigma \cap \boundary_+ G'_0$ to $p_+$, which contains the edge $S \cap \boundary_+ G'_0$ and which is otherwise disjoint from $A'$. Let $B(h) = \beta(S) \times I \subset G'_0$ and let $B$ be the union of the $B(h)$ for all $h \subset T_0$. 

Figure \ref{Fig: G3} shows an example of $G'_0 \cap (A \cup B)$ for a component $G'_0$ which is a $\text{square} \times I$. In this example, $T_0$ has an edge with a single endpoint on $\boundary_- G'_0$.

\begin{center}
\begin{figure}[ht]
\scalebox{0.35}{\input{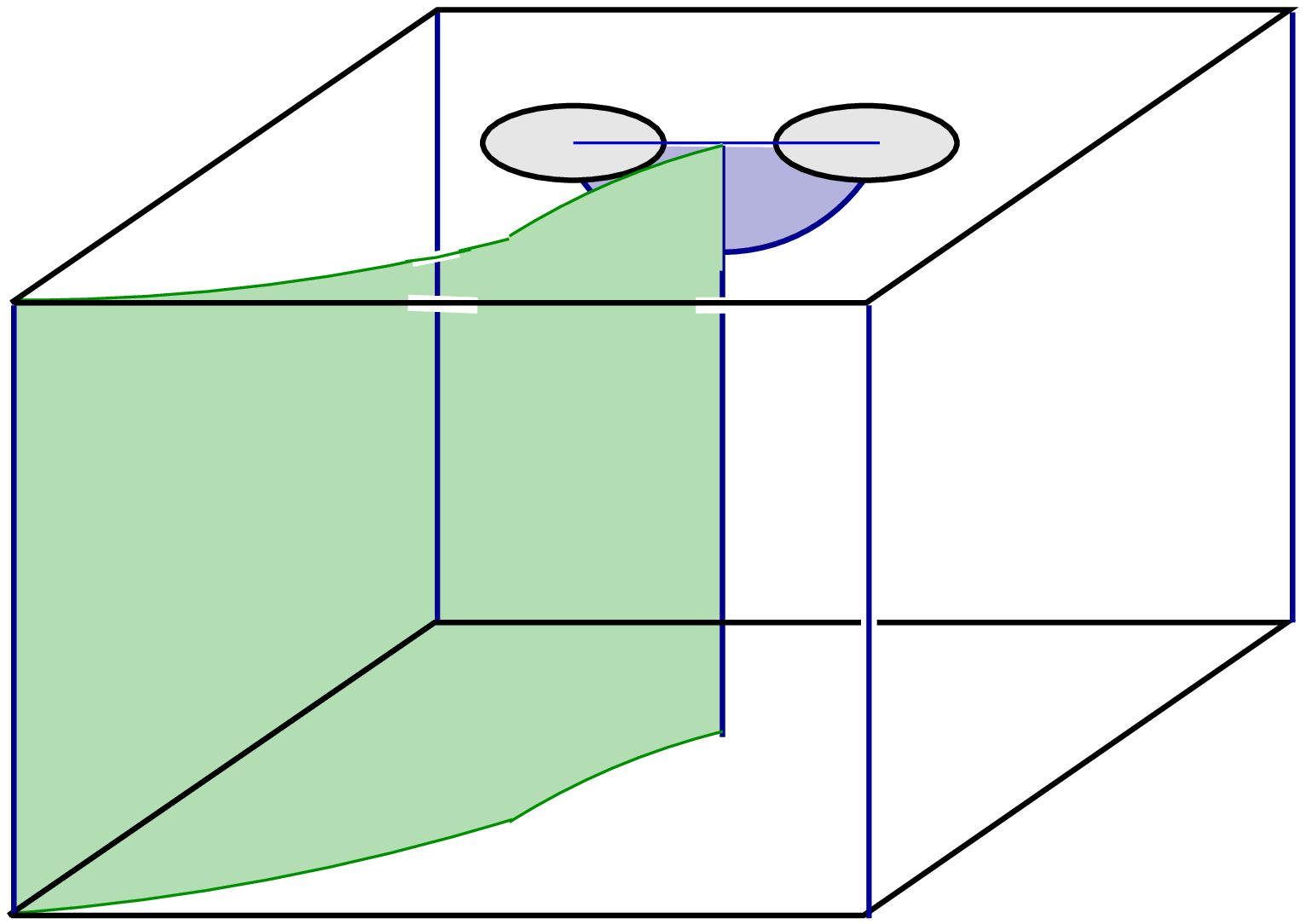_t}}
\caption{An example with $g=1$. The blue surface is a portion of $A$, the green surface is a component of $B$.}
\label{Fig: G3}
\end{figure}
\end{center}

Figure \ref{Fig: Surface B1} shows an example of the surface $B$ in a compressionbody $M$ with $\boundary_- M$ a torus and $\boundary_+ M$ a genus 2 surface. In the picture, $M$ has been cut open along $P$. In this example, $T_0$ is a single edge with both endpoints on $\boundary_- M$.

\begin{center}
\begin{figure}[ht]
\scalebox{0.35}{\input{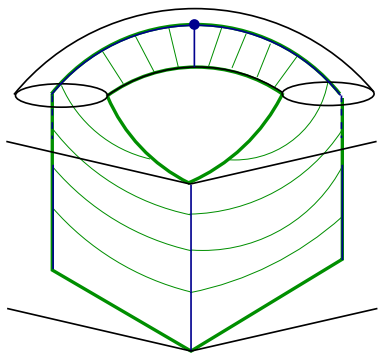_t}}
\caption{An example of the surface $B$ when $T_0$ has an edge with both endpoints on $\boundary_- M$.}
\label{Fig: Surface B1}
\end{figure}
\end{center}

Isotope any spokes adjacent to $h$ so that they lie in $B$. Let $A = \cls(A' - B)$ so that we may think of $A$ as the union of squares each with three edges on $T$ and one edge on $\boundary_+ M$. These squares may contain edges of $T_s$ in their interior. Let $R = A \cup B \cup D \cup P$.

\begin{lemma}
The complex $R$ has the following properties:
\begin{enumerate}
\item $T \subset R$.
\item Each component of $R - T$ is a disc with boundary lying on $\boundary M \cup T$.
\item $G = \cls(M - R)$ is a collection of 3--balls containing copies of portions of $R$ in their boundary.
\item $R$ is $T$--incompressible and $T$--$\boundary$--incompressible.
\item No component of $R$ is disjoint from $T$.
\end{enumerate}
\end{lemma}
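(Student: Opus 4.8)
The plan is to read off (1)--(5) from the construction of $R = A \cup B \cup D \cup P$, since each piece of $R$ was placed by hand; only (3) requires any real bookkeeping, and that is where I expect the difficulty.

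\emph{Properties (1), (2), (5).} For (1), I would check that every edge of $T$ was put into one of the four pieces during the construction: the components of $T_v$ were isotoped into $P$ (recall $P \cap T = T_v$); the portions of the edges of $T_0$ lying in the handlebodies $U$, together with the spokes attached along $T_0$, were isotoped into $A'$; the remaining portions of the edges $h$ of $T_0$ meeting $\boundary_- M$, together with the spokes adjacent to them, were isotoped into $B$; and the distinguished path of each spoke meeting a disc of $D$ was isotoped into that disc. Since $A = \cls(A' - B)$, this gives $T \subset R$. For (2), each piece meets $T$ in a forest whose free ends lie on $\boundary M$: a disc $D'$ of $D$ meets $T$ in the tree consisting of a single spoke, rooted at the interior point $D' \cap T_0$, with leaves on $\boundary D' \subset \boundary_+ M$; a square of $A$ or $B$ meets $T$ along its three sides lying on $T$ together with interior spoke-pieces whose free ends lie on its $\boundary_+ M$-side or on its $T$-sides; and an annulus of $P$ meets $T$ in $T_v$, which contains the vertical arc $p$ running from $\boundary_- M$ to $\boundary_+ M$. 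Cutting a disc or annulus along such a forest leaves discs, each with boundary on $T \cup \boundary M$; this is precisely the assertion recorded when $A'$, $B$, $D$, and $P$ were built. Finally, (5) is immediate: every $D'$ contains a spoke, every square of $A \cup B$ has a side on $T$, and every annulus of $P$ meets $T_v$, which is nonempty in each component of $M_0$ by assumption (C); since $R$ is the union of these pieces, glued to one another only along subsets of $T$, no component of $R$ is disjoint from $T$.

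\emph{Property (3), the crux.} I expect this to be the main obstacle, as the combinatorics extend those of \cite{HS1,HS2} with the two additional families of pieces $B$ and $P$. I would cut $M$ along $R$ in the stages dictated by the construction, checking at each stage that every complementary piece is a product or a $3$-ball. First, $\boundary$-reducing $M$ along $D$ leaves $M_0 = \boundary_- M \times I$ together with a collection of $3$-balls. Second, cutting $M_0$ along the vertical annuli $P$, which were chosen disjoint from the remnants of $D$, cuts each component $M'_0$ into the piece $G'_0$, a $(4g\text{-gon})\times I$ (or two copies of $D^2 \times I$ when $\boundary_+ M'_0$ is a sphere). Third, one checks that the remnants of $A \cup B$, together with those of $D$ and of the auxiliary discs $\mb{E}$, cut whatever pieces survive the first two stages into $3$-balls: the portions of $A$ in the $3$-balls from the first stage form the band system dual to their spines, exactly as in \cite{HS1}, while inside each $G'_0$ the trace $(A \cup D) \cap \boundary_+ M'_0$ lies in the polygon $Z_+$, each component of $B$ carries the square of $A$ adjacent to an edge $h$ of $T_0$ over to the vertical arc $p$, and slicing $Z_+ \times I$ first along this trace and then vertically along the copies of $B$ leaves $3$-balls. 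I would present this final step by listing the cells of $G'_0 \cap R$ and checking that each complementary region is a ball, guided by Figures \ref{Fig: G1} and \ref{Fig: G3}. That each resulting $3$-ball contains copies of portions of $R$ in its boundary is automatic, since we have cut $M$ along $R$.

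\emph{Property (4).} This I would deduce formally from (2) and (3). Let $E_0$ be a $T$--compressing or $T$--$\boundary$--compressing disc for $R$. Since $\interior E_0$ is disjoint from $R$, $E_0$ lies in a single $3$-ball of $\cls(M - R)$, whose interior is disjoint from $T$. If $E_0$ is a compressing disc, then $\boundary E_0$ is a simple closed curve in $R - T$, hence, by (2), a curve lying in a disc component of $R - T$; but a simple closed curve in a disc is inessential, a contradiction. If $E_0$ is a $\boundary$--compressing disc, then $\boundary E_0$ is the endpoint union of an arc $\gamma$ properly embedded in a disc component $R_0$ of $R - T$ and an arc $\delta \subset \boundary M$; since $\delta$ lies in a single component of $\boundary M$ and, from the description of the pieces in (2), $R_0$ meets each component of $\boundary M$ in at most one arc, one of the two discs into which $\gamma$ separates $R_0$ has boundary the union of $\gamma$ with a single arc of $\boundary R_0 - T$, hence with an arc of $\boundary R - T$. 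Thus $\gamma$ is parallel in $R - T$ to an arc of $\boundary R - T$, so $E_0$ is not a genuine $T$--$\boundary$--compression. Therefore $R$ is $T$--incompressible and $T$--$\boundary$--incompressible.
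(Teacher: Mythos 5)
Your proposal follows essentially the same route as the paper: properties (1), (2) and (5) are read off piece by piece from the construction of $A$, $B$, $D$ and $P$; property (3) is obtained by cutting in the same stages (along $D$, then along $P$, so that the components of $M - (D \cup P)$ adjacent to $\boundary_- M$ are $(4g\text{-gons})\times I$, and then along $A \cup B$); and property (4) is deduced from the fact that each component of $R - T$ is a disc with boundary on $\boundary M \cup T$ — your argument for (4) is in fact more explicit than the paper's one-line justification. One small correction for stage three of (3): the auxiliary discs $\mb{E}$ should not be listed among the cutting surfaces, since $\mb{E}$ is not part of $R$ (cutting along it would describe $M$ cut along $R \cup \mb{E}$ rather than $\cls(M - R)$, and regluing balls along the $\mb{E}$--scars would need a further argument); the paper instead argues directly that $G'_0 - (A \cup B)$ is simply connected, and since the rest of your description of this stage never actually uses $\mb{E}$, you should simply omit it.
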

\begin{proof}
To see that the first claim is true, notice that each component of $T_v$ is contained in some component of $P$, $T_0$ is contained in $A \cup B$, and the spokes in $T_s$ were all isotoped to lie in $A \cup B \cup D$ depending on which edges or vertices of $T_0$ they were adjacent to.

The second observation is equally easy to see. Notice that each component of $R - T$ is a component of $A - T$, $D - T$, $B - T$, or $P - T$. We have already observed that the first statement is true for components of $A - T$. The statement is obviously true for components of $P - T$ since $P$ is the union of annuli containing components of $T_v$. Each component of $D$ and $B$ is a disc and so the statement is also true for components of $D - T$ and $B - T$. See Figure \ref{Fig: Opened R1} for a picture of $R \subset \boundary G$ when $M$ and $T$ are the 3--manifold and graph from Figure \ref{Fig: Surface B1}. See Figure \ref{Fig: Opened R2} for a picture of $R \subset \boundary G$ when $M$ and $T$ are the 3--manifold and graph from Figure \ref{Fig: G3}. 

\begin{center}
\begin{figure}[ht]
\scalebox{0.5}{\input{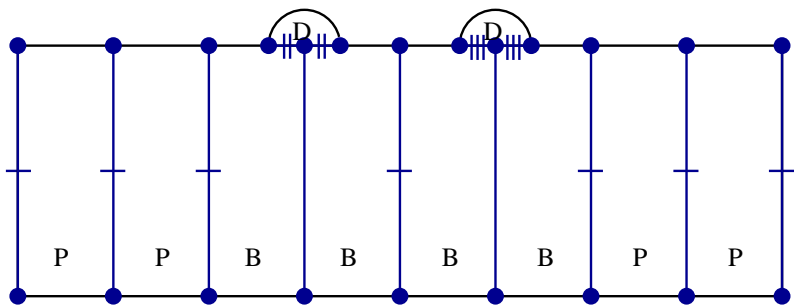_t}}
\caption{The surface $R$ in $\boundary M$ when $M$ is a compressionbody with $\boundary_- M$ a torus and $\boundary_+ M$ a genus 2 surface. The graph $T_0$ is a single edge with both vertices on $\boundary_- M$. The left and right edges should be glued together so that $P \cup B$ is an annulus in $\boundary G$. Edges with the same positive number of hash marks are identified in $M$. The edge with a single hash mark is the sole edge of $T_v$. The edges with two and three hash marks belong to $T_s$. Non-horizontal edges without hash marks are not glued to any other edges.}
\label{Fig: Opened R1}
\end{figure}
\end{center}

\begin{center}
\begin{figure}[ht]
\scalebox{0.5}{\input{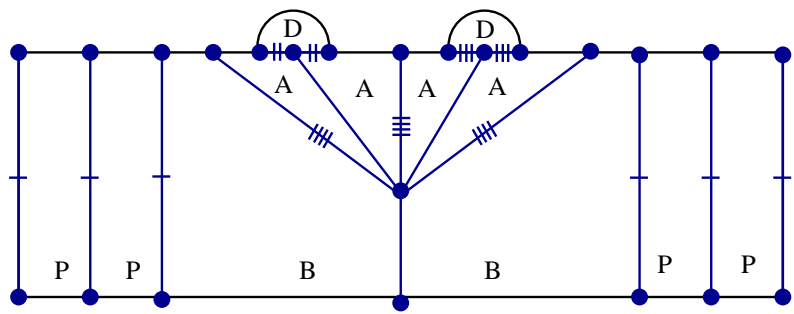_t}}
\caption{The surface $R$ in $\boundary M$ when $M$ is a compressionbody with $\boundary_- M$ a torus and $\boundary_+ M$ a genus 2 surface. The graph $T_0$ is a graph with two edges and two vertices, one of which is on $\boundary_- M$. The left and right edges should be glued together so that $P \cup B$ is an annulus in $\boundary G$. Edges with the same positive number of hash marks are identified in $M$. The edge with a single hash mark is the sole edge of $T_v$. The edges with two, three, and four hash marks belong to $T_s$. Non-horizontal edges without hash marks are not glued to any other edges. }
\label{Fig: Opened R2}
\end{figure}
\end{center}

To see the third statement recall that a component of $M - (D \cup P)$ adjacent to $\boundary_- M$ is a ball of the form $G'_0 = (4g-\text{gon}) \times I$. Each component of $(A \cap G'_0)$ is a disc with boundary on $\boundary_+ G'_0 \cup T$. Each component of $A \cap G'_0$ is joined to a single $(\text{vertex} \times I)$ by a square in $B$. Thus, $G'_0 - (A \cup B)$ is simply connected and so is a 3--ball containing portions of $R$ in its boundary. 

A component of $M - (D \cup P)$ not adjacent to $\boundary_- M$ is also a component of $M - D$. Since every edge of $T_0$ is adjacent to a spoke in its interior and since every such spoke is contained in a disc of $D$, a component of $M - D$ not adjacent to $\boundary_- M$ is a 3--ball. Considerations similar to those already discussed show that the lemma is true in this case as well.

$R$ is $T$--incompressible and $T$--$\boundary$--incompressible, because, by construction, each component of $R - T$ is a disc. Similarly, from the construction it is obvious that no component of $R$ is disjoint from $T$.
\end{proof}

\section{Special case of Theorem \ref{Main Theorem}}

In this section we prove a key result which we will make use of in the proof of Theorem \ref{Main Theorem}. As a result of Lemmas \ref{Lem: Resolving Pod Handles 1} and \ref{Lem: Resolving Pod Handles 2}, we make the following assumption:

\textbf{(NPH)} $T_1$ and $T_2$ contain no pod handles adjacent to $\boundary_- M$.

A \defn{vertical cut disc} for $M$ is a boundary reducing disc for $\boundary_+ M$ which intersects $T$ transversally in a single point and intersects $H$ in a single simple closed curve. Notice that if $M = B^3$ or if $M = \boundary_- M \times I$, then there is no vertical cut disc. This is not the same definition of ``cut disc'' as that used in \cite{TT}, although it is related.

Let $\mb{I}$ denote the union of components of $T_s$ such that each component of $T_s$ in $\mb{I}$ consists of a single edge such that an endpoint of the edge is either at a vertex of $T_0$ or in the interior of an edge of $T_0$ which has zero or two endpoints on $\boundary_- M$. If $M = B^3$, we require that $\mb{I} = \nil$. Let $\mb{T} = T - \mb{I}$. Notice that $H$ is a Heegaard surface for $(M,\mb{T})$. 

\begin{proposition}\label{Prop: Strengthened Perturbed}
Suppose $M$ does not have a vertical cut disk and suppose that $H$ as a splitting of $(M,\mb{T})$ does not satisfy conclusions (1), (2), (5), or (6) of Theorem \ref{Main Theorem}. Suppose also that no edge of $T$ is perturbed and $T$ satisfies conditions (A)-(C) in Section \ref{sec:R} and condition (NPH). Then one of the following occurs:
\begin{itemize}
\item $H$ is perturbed as a splitting of both $(M,T)$ and of $(M,\mb{T})$; or
\item $T$ has a removable cycle.
\end{itemize}
\end{proposition}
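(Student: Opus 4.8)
The plan is to use the complex $R$ from Section~\ref{sec:R} as the ``essential surface'' that will interact with the Heegaard surface $H$, and to run a sweepout/thin-position argument in the style of Hayashi–Shimokawa \cite{HS1,HS2}. Since $R$ is $T$--incompressible and $T$--$\boundary$--incompressible, and cutting $M$ along $R$ yields a collection of $3$--balls (so $R$ is ``spanning'' in an appropriate sense), we expect to be able to isotope $H$ and $R$ into a good relative position in which the intersection pattern $H \cap R$ can be analyzed. First I would put $H$ in Morse position with respect to a function realizing $R$ (or its dual spine) as a level set, and study a lowest-weight position of $H\cap R$. Standard innermost-disc/outermost-arc arguments, together with Proposition~\ref{Prop: surface classification} applied on each side of $H$ inside each ball of $M-R$ (where the trivially embedded graph $T_i\cap(\text{ball})$ sits), should force the components of $H\cap R$ into a short list of controlled types: trivial loops bounding discs disjoint from $T$, loops bounding (bridge) discs meeting $T$ once, and arcs cutting off bridge discs.

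\textbf{Main line of argument.} Having done the above, I would argue by induction on the complexity of the pair (e.g. genus of $H$ plus $|H\cap T|$), using Lemmas~\ref{Lem: Resolving Pod Handles 1}, \ref{Lem: Resolving Pod Handles 2}, and \ref{Lem: puncturing mie} to reduce to the case (NPH) with conditions (A)--(C), exactly as the hypotheses of the proposition already demand. The key dichotomy is whether $R$ can be isotoped to be disjoint from $H$ or not. If $H\cap R$ can be made empty, then $H$ lies inside one of the balls of $M-R$, hence $H$ is a Heegaard surface for a trivially embedded graph in a ball (or in $\boundary_-M\times I$-type piece); since we have excluded conclusions (1),(2),(5),(6) for $(M,\mathbf{T})$, the only remaining possibility is that $H$ is perturbed, and by tracking the bridge discs through the isotopy we get that $H$ is perturbed both as a splitting of $(M,\mathbf{T})$ and of $(M,T)$. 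If $H\cap R$ cannot be emptied, I would extract from an innermost/outermost configuration either (a) a pair of bridge discs on opposite sides of $H$ sharing exactly one endpoint on $T$ — giving a perturbation, which since no edge of $T$ is perturbed by hypothesis must involve a vertex and hence (via (RP2)--(RP3)) a removable cycle; or (b) a compressing disc for $H$ meeting $T$ once together with a cancelling pair, assembling into the data of (RP1)--(RP3) for a cycle $\zeta\subset T$, i.e. a removable cycle. The edges $\mathbf{I}$ (single-edge spokes attached to $T_0$) and the excised graph $\mathbf{T}=T-\mathbf{I}$ are set up precisely so that the relevant cycles in $T$ run through $T_0$ and a spoke; the hypothesis ``no edge of $T$ is perturbed'' is what rules out the non-cyclic perturbed conclusion and funnels everything into ``removable cycle.''

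\textbf{The hard part.} The main obstacle is the bookkeeping in the non-empty-intersection case: showing that after all reductions one really does land on either a genuine perturbation of $(M,T)$ (not just of $(M,\mathbf{T})$) or on a configuration satisfying \emph{all} of (RP1)--(RP3), including the delicate condition that the compressing disc $E$ with $|E\cap T|=1$ meets the relevant cancelling disc in exactly one point and is otherwise disjoint from a complete collection of bridge discs. Getting the disc $E$ requires exhibiting a vertical cut disc-like object, which is exactly why the hypothesis ``$M$ has no vertical cut disk'' appears — its absence must be leveraged to conclude that the candidate $E$ cannot be a vertical cut disc, and hence the intersection configuration it comes from must instead produce the cycle's perturbing/cancelling data. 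I would expect to need a careful case analysis on where the innermost loop or outermost arc of $H\cap R$ sits relative to the pieces $A$, $B$, $D$, $P$ of $R$, using that each component of $R-T$ is a disc with boundary on $\boundary M\cup T$, and that $D$ carries the distinguished paths through $T_s$. A secondary subtlety is handling the case $\boundary_+M\ne S^2$ versus $\boundary_+M=S^2$ and making sure the removable cycle produced is disjoint from $\boundary_+M$ as required by conclusion (4) of the Main Theorem; this should follow because the cycles in question are built from $T_0$ (an elementary spine for $M$) together with spoke edges, all of which can be pushed off $\boundary_+M$.
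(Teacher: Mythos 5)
Your general instinct—to play the complex $R$ off against $H$ and use Proposition \ref{Prop: surface classification} on each side—matches the paper's starting point, but the central dichotomy you propose does not survive contact with the setup, and the part you defer as ``bookkeeping'' is in fact the entire proof. First, the case ``$H\cap R$ can be made empty'' never occurs: $T\subset R$ by construction, and $T$ meets $H$ because $T$ is in bridge position and contains an edge, so $H\cap R\neq\nil$ always; moreover the argument must keep $H$ \emph{and} $T$ fixed and isotope only $R$ (otherwise the perturbing/cancelling discs and the data (RP1)--(RP4) you eventually need are not discs for the original splitting). The dichotomy the paper actually runs is different: after isotoping $R$ so that $R\cap C_1$ has components only of the types in Proposition \ref{Prop: surface classification} (the no-vertical-cut-disc hypothesis is used here, via Lemma \ref{Lem: vert cut disc}, to kill type (2) discs in $R\cap C_1$, and again via Corollary \ref{Cor: Incomp Disc implies Vert. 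Cut Disc} to force $T_0$ to be edgeless when $D\cap C_2$ is boundary-incompressible), one asks whether $R\cap C_2$ is $T_2$--$\boundary$--compressible or not. The incompressible case leads to $M=B^3$ or $M=\boundary_-M\times I$ and hence to conclusions (1), (5), or (6), contradicting the hypotheses; your sketch has no counterpart of this step.

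Second, in the compressible case the content is not an innermost-loop/outermost-arc extraction of a perturbing pair, but a long combinatorial tracking of the sequence of $T_2$--$\boundary$--compressions $D_1,\hdots,D_n$: one records their arcs as a graph $\Gamma_i$ on $R$ with vertices $\boundary_1 Q$ for a spine $Q$ of $(C_1,T_1)$, deforms $Q$ along ``splendiscs'' (including extending $Q$ across horizontal and crooked horizontal discs), and repeatedly invokes Frohman's trick to convert cycles in $\tild{\Gamma}_i$ into stabilizations. The perturbations of both $(M,T)$ and $(M,\mb{T})$, and the removable cycle (which arises only in the solid-torus-with-core-loop-and-single-spoke case), come out of the container-disc analysis (Lemma \ref{Lem: Container Disc}) together with the constraints on how $\tild{\Gamma}_i$ can meet $D$, $A$, $B$, and $P$, and the endgame uses the combinatorial Tree Lemma of the appendix. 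None of this is replaced by your proposed Morse/sweepout framing (there is no natural sweepout here: $R$ is a $2$--complex, not a level surface, and no induction on $\genus(H)+|H\cap T|$ appears or is needed in this proposition). As written, your proposal names the desired conclusions but does not supply the mechanism that produces them, so it has a genuine gap rather than being an alternative proof.
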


\begin{proof}
Consider the complex $R$ constructed in Section \ref{sec:R}. Because $R$ is $T$--incompressible and $T$--$\boundary$--incompressible, a $T$-compression or $T$--boundary compression of $R\cap C_1$ may be accomplished by an isotopy of $R$. We may, therefore, isotope $R$ (fixing $T$ and $H$) so that $R \cap C_1$ consists of surfaces of type (2) - (9) in the statement of Proposition \ref{Prop: surface classification}. Out of all such isotopies of $R$, assume that $R$ has been isotoped so as to minimize $|R \cap H|$. Notice that $R \cap C_1$ is $T_1$--incompressible and $T_1$--$\boundary$-incompressible. $R \cap C_2$ is $T_2$--incompressible, but may be $T_2$--$\boundary$-compressible.

The following sequence of lemmas will complete the proof.

\begin{lemma}\label{Lem: vert cut disc}
If $R \cap C_1$ has a disc of type (2), then there exists a vertical cut disc for $M$. Furthermore, $R \cap C_2$ does not have a disc of type (2).
\end{lemma}

\begin{proof}
Suppose that $R \cap C_1$ has a disc of type (2).

\textbf{Case A:} $R \cap C_2$ is $T_2$--$\boundary$--compressible. 

Because $R - T$ is the union of discs, the proofs of \cite[Proposition 3.3]{HS1} and \cite[Proposition 3.3]{HS2} show that $R \cap C_1$ does not have a disc of type (2). \qed(Case A)

\textbf{Case B:} $R \cap C_2$ is $T_2$--$\boundary$--incompressible.

Let $E_1$ be the component of $R \cap C_1$ which is a disc of type (2). Let $E_2$ be the component of $(R \cap C_2) - T_2$ which contains $\boundary E_1$. Since $E_2$ is $T_2$--incompressible and $T_2$--$\boundary$--incompressible and since $\boundary E_1$ is disjoint from $T$, $E_2$ is a surface of type (2), (7), or (8) from the statement of Proposition \ref{Prop: surface classification}. Let $E = E_1 \cup E_2$. $E$ is one of the following:
\begin{itemize}
\item[(a)] a sphere disjoint from $T$,
\item[(b)] a disc disjoint from $T$ which intersects $H$ in a single simple closed curve, or 
\item[(c)] a disc intersecting $H$ in a single simple closed curve and which contains some number of pod handles in $C_2$.
\end{itemize}

Since $E$ is a component of $R - T$, it cannot be of type (a). Similarly, since $T$ contains a spine for $M$ and no component of $R$ is disjoint from $T$, $E$ cannot be of type (b). Therefore, $E$ is of type (c). The only components of $R - T$ whose closures have all of their boundary contained in $\boundary_+ M$ are discs in $D$. Thus, $E \subset D$. Each disc in $D$ contains a single spoke, and therefore $D \cap T_0$ consists of a single point. A slight isotopy of $D$ produces a vertical cut disc. \qed(Case B)

Suppose that $R \cap C_2$ has a disc of type (2). Since $R \cap C_1$ is $T_1$--$\boundary$--incompressible, an argument symmetric to that of Case B above produces a compressing disc for $\boundary_- M$ in $M$. Since $M$ is a compressionbody, this is impossible.
\end{proof}

As $M$ has no vertical cut-disks, we conclude that $R \cap C_1$ has no disc of type (2).

Let $Q$ be an elementary spine of $(C_1, T_1)$. In Case 2 below, we will be deforming $Q$ into a spine which may not be elementary. After such a deformation, we consider $Q = Q^1 \cup Q^2$ with $Q^1$ a graph and $Q^2$ the union of discs.

Think of $C_1$ as being a very small regular neighborhood of $Q \cup (\boundary_- M \cap C_1)$. By hypothesis (NPH), $Q$ does not contain any edges of $T$ (i.e. pod handles). Since $R \cap C_1$ does not have a disc of type (2), $Q \cap R = \boundary_1 Q \subset T$.

The next lemma will be useful at several points in upcoming arguments.

\begin{lemma}\label{Lem: Bdy-incomp discs}
Let $D'$ be a disc in $D$ and suppose that $D' \cap C_2$ is $T_2$--$\boundary$--incompressible. Let $x = D' \cap T_0$. The point $x$ may be a vertex of $T_s$ of valence 2 or more. Then $Q \cap D' \subset \boundary_1 Q$ is a single point and one of the following occurs:
\begin{itemize}
\item the component of $T_s$ contained in $D'$ consists of a single edge and $\boundary_1 Q \cap D' = x$.
\item $T_s \cap D'$ has a single vertex not on $\boundary D'$ and that vertex is $x$ which has valence at least 3. Furthermore, $\boundary_1 Q \cap D' = x$.
\item $T_s \cap D'$ has two vertices not on $\boundary D'$. One of those vertices is $x$ and $x$ has valence 2. The other vertex is the sole point of $\boundary_1 Q \cap D'$.
\end{itemize}
\end{lemma}

\begin{proof}
Let $\tild{D}$ be the closure of a component of $D' - T$. The surface $\tild{D} \cap C_2$ has a portion of its boundary on $\boundary_+ M \subset \boundary_- C_2$ and also intersects $T$. It must, therefore, be a surface of type (4), (6), (7) or (8). 

If $\tild{D} \cap C_2$ is of type (4), then $D' = \tild{D}$. Furthermore, $T_s \cap D'$ is a single edge which is disjoint from $\boundary_1 Q$ except at the endpoint of $T_s \cap D'$ in the interior of $D'$. Hence, the first conclusion holds.

We may, therefore, assume that $\tild{D} \cap C_2$ is a surface of type (6), (7), or (8). Suppose that $\tild{D} \cap C_2$ is an annulus of type (7) or (8). In this case $\tild{D} = D'$. Since $\tild{D} \cap C_2$, in this case, has one circular boundary component on $H$ and the other on $\boundary_+ M$, $|\boundary_1 Q \cap D'| = 1$. Furthermore, because there is exactly one component of $T_2 \cap (\tild{D} \cap C_2)$ with an endpoint on $H$, and that component spans $\tild{D} \cap C_2$, the point $x$ is equal to $\boundary_1 Q \cap D'$. Since $\tild{D} = D'$, we conclude that $T_s \cap D'$ consists of a single edge and $\boundary_1 Q \cap D' = x$.

Henceforth, we assume that for each disc $\tild{D} \subset \cls(D' - T)$, $\tild{D} \cap C_2$ is a disc of type (6) or (8). Since $D' \cap T_0$ is a single point, $x$ is the only valence one vertex of $D' \cap T_s$. Furthermore, since $T_s \cap D'$ is a tree, it is connected. Thus, $\tild{D} \cap C_2$ is a disc of type (6) and not of type (8). This implies that no point of $T_s \cap D'$ is a vertex of valence one on the interior of $D'$. Also, if $\tild{D}$ is the closure of a component of $D' - T$, $\tild{D}$ is adjacent to exactly one point of $\boundary_1 Q$. Since no edge of $T$ with neither endpoint on $\boundary M$ lies entirely in $C_2$, each edge of $T_s \cap D'$ with neither endpoint on $\boundary D'$ contains at least one point of $\boundary_1 Q$. Similarly, if $e_1$ and $e_2$ are edges in $T_s \cap D'$ sharing one endpoint and having their other endpoints on $\boundary D'$, then there must be a point of $\boundary_1 Q$ in $e_1 \cup e_2$. If $x$ is a vertex of valence 2, for the moment cease considering it as a vertex of $T_s \cap D'$. Then, the hypotheses of Lemma \ref{Lem: Tree Lemma} (in the appendix) are satisfied with $T' = T_s \cap D'$ and $\mc{P} = \boundary_1 Q$. Hence, $|\boundary_1 Q \cap D'| = 1$ and $T_s \cap D'$ has at most a single vertex $y$ not on $\boundary D'$.  Furthermore, if $y$ exists, it is the sole point of $\boundary_1 Q \cap D'$. 

If $x$ has valence 3 or more, then $y = x$, since $y$ is the sole vertex of valence 3 or more on the interior of $D'$.
\end{proof}

\begin{corollary}\label{Cor: Incomp Disc implies Vert. Cut Disc}
Suppose that $M \neq B^3$ and that $D'$ is a disc in $D$ such that $D' \cap C_2$ is $T_2$--$\boundary$--incompressible. Then there exists a vertical cut disc, contrary to the hypotheses.\end{corollary}
\begin{proof}
By Lemma \ref{Lem: Bdy-incomp discs}, $D'$ intersects $Q$ exactly once. Since $M \neq B^3$, the disc $D'$ is an essential disc in $M$. By construction, $\boundary D'$ lies on $\boundary_+ M$ and $D'$ intersects $T_0$ exactly once in the interior of an edge $e$ of $T_0$. A slight isotopy of $D'$ off $T_s$ produces a disc which is disjoint from $T_s$, intersects $T_0$ in a single point and intersects $H = \boundary \eta(Q \cup \boundary_- C_1)$ in a single simple closed curve.
\end{proof}

We now consider two cases. The first is when $R \cap C_2$ is $T_2$--$\boundary$--incompressible and the second is when $R \cap C_2$ is $T_2$--$\boundary$--compressible.

\textbf{Case 1:} $R \cap C_2$ is $T_2$--$\boundary$--incompressible.

In this case the closure of each component of $(R \cap C_2) - T_2$ is a surface of type (2) - (8).  By Lemma \ref{Lem: vert cut disc} each component of $(R \cap C_2) - T_2$ is, in fact, a surface of type (3) - (8). 

\begin{lemma}\label{Lem: No D}
Either $M = B^3$ or $M = \boundary_- M \times I$.
\end{lemma}
\begin{proof}
If $D \neq \nil$, then for any disc $D'$ in $D$, $D' \cap C_2$ is $T_2$--$\boundary$--incompressible. If $T_0$ contains an edge, then (by construction of $T_0$ and $R$), there exists a disc $D'$ in $D$ with $\boundary D'$ essential in $\boundary_+ M$. Then by Corollary \ref{Cor: Incomp Disc implies Vert. Cut Disc}, there exists a vertical cut disc, contrary to the hypotheses of the Proposition.  Thus, if $D \neq \nil$, $T_0$ is a single point. In which case, $M = B^3$, $T_s = T$, and $R = D$ is a single disc containing $T_s$. If $D = \nil$, then $M = \boundary_- M \times I$.
\end{proof}

\begin{lemma}\label{Lem: Case 1a.i for B^3}
If $M = B^3$, then either $H$ is stabilized or $H- T$ is properly isotopic in $M - T$ to $\boundary M - T$.
\end{lemma}
\begin{proof}
If $M = B^3$ then $T_0$ is a single point $x$ and $T = T_s$. $R = D$ is a single disc which contains $T$. By Lemma \ref{Lem: Bdy-incomp discs}, $Q \cap R$ is a single point. $G$, which is the closure of $M - R$, consists of two 3--balls, each with a copy of $T$ in its boundary. If $Q$ has an edge, then there must be a cycle in $Q \cap G$. By Frohman's trick \cite[Lemma 4.1]{HS1}, $H$ is stabilized. Thus we may assume that $Q$ is a single point. If $T$ contains more than one edge, $Q$ is the sole interior vertex of $T$. Whether or not $T$ has more than one edge, $H - T$ is parallel in $M - T$ to $\boundary M - T$.
\end{proof}

We now turn our attention to the case when $M = \boundary_- M \times I$. In this case, $T = T_v$.

\begin{lemma}\label{Lem: P disjt from Q}
Suppose that $\tild{P}$ is the closure of a component of $P - T$ adjacent to $\boundary_- M$. Then $\tild{P} \cap Q = \nil$.
\end{lemma}
\begin{proof}
We may think of $\tild{P}$ as a square with two opposite sides on $T$, one side on $\boundary_+ M$, and one side on $\boundary_- M$. Suppose that $Q \cap \tild{P} \neq \nil$.

If $\boundary_- M \subset C_2$, then $\tild{P} \cap C_2$ is a disc having at least three disjoint sub-arcs of its boundary on $T$ and two subarcs on $\boundary_- C_2$. However, no surface of type (2)-(8) has this property. (The surfaces of type (8) contain pod handles in their interior, not on their boundary.) Thus, $\boundary_- M \subset C_1$.

If $\boundary_- M \subset C_1$, then $\tild{P} \cap C_2$ is a disc having at least two subarcs of its boundary on $H$, one subarc on $\boundary_+ M \subset \boundary_- C_2$, and at least three subarcs on $T$. No such surface appears in the list from Lemma \ref{Prop: surface classification}.
\end{proof}

\begin{corollary}\label{Cor: H separates}
$H$ separates $\boundary_- M$ from $\boundary_+ M$.
\end{corollary}
\begin{proof}
Suppose to the contrary that $\boundary_- M \subset C_2$. Let $\tild{P}$ be the closure of a component of $P - T$ adjacent to $\boundary_- M$. By Lemma \ref{Lem: P disjt from Q}, $\tild{P} \cap Q = \nil$. A path from $\boundary_+ M$ to $\boundary_- M$ lying in $\boundary \tild{P} \cap T$ is a path in $T_2$ joining $\boundary_- C_2$ to itself. This is an impossibility since $T_2$ is trivially embedded in $C_2$. 
\end{proof}

\begin{corollary}\label{Cor: T disjoint from Q}
$T_v = T$ is disjoint from $Q$ and each component of $T_v$ is an edge.
\end{corollary}
\begin{proof}
Let $\phi$ be a path in $T_v$ joining $\boundary_- M$ to $\boundary_+ M$ which contains a vertex of $T$. If such a path exists, it is possible to choose $\phi$ so that it is contained in the boundary of $\tild{P}$, the closure of a component of $P - T$ adjacent to $\boundary_- M$. By Lemma \ref{Lem: P disjt from Q}, the edge of $\phi$ adjacent to $\boundary_- M$ is disjoint from $\boundary_1 Q$ and is, therefore, a pod handle in $C_2$. This is impossible by assumption (NPH). Thus, $T_v$ contains no vertices. 

Since component of $T_v$ is an edge, each component is contained in the closure of a component of $P-T$ adjacent to $\boundary_- M$; by Lemma \ref{Lem: P disjt from Q}, each edge of $T_v$ is disjoint from $Q$.
\end{proof}

We now put the previous results together to obtain:

\begin{corollary}
$H - T$ is properly isotopic in $M - T$ to $\boundary_+ M - T$.
\end{corollary}
\begin{proof}
Recall that, by hypothesis, $H$ is not stabilized. Since $Q \cap P = \nil$, an application of Frohman's trick shows that $Q_G$ is either empty or is a tree with at most one endpoint on $\boundary_- M$. By Corollary \ref{Cor: T disjoint from Q}, $T$ is disjoint from $Q$ and so $Q = \nil$. Also, Corollary \ref{Cor: T disjoint from Q} shows that each component of $T_v = T$ is a vertical edge. Thus, $C_1 = \eta(\boundary_- M)$. Since $M = \boundary_- M \times I$, this implies that $H - T$ is properly isotopic to $\boundary_+ M - T$ in $M - T$.
\end{proof}

We have shown, therefore, that the hypotheses of Case 1 imply that one of Conclusions (1), (5), or (6) of Theorem \ref{Main Theorem} occurs, contrary to the hypotheses of Proposition \ref{Prop: Strengthened Perturbed}. This concludes Case 1. 

\textbf{Case 2:} $R \cap C_2$ is $T_2$--$\boundary$--compressible.

This case requires the all the technology of the proofs of the main results in \cite{HS1} and \cite{HS2}.

Let $D_1, D_2, \hdots, D_n$ be the sequence of $T_2$--$\boundary$ compressions which convert $R \cap C_2$ into a $T_2$--$\boundary$--incompressible surface. These $T_2$--$\boundary$ compressions may be accomplished by isotopies of $R$. Let $R_i$ be the result of boundary-compressing $R_{i-1}$ using the disc $D_i$ beginning with $R_0 = R$. The boundary of each disc $D_i$ is the endpoint union of two arcs $\alpha_i$ and $\delta_i$ with $\alpha_i \subset R_{i-1}$ and $\delta_i \subset H$. We may isotope each disc $D_i$ so that each $\alpha_i \subset R$ and we may extend each $D_i$ so that $\delta_i$ is a, possibly non-embedded, path in $\boundary_- C_1 \cup Q$. Let $\Gamma_i$ be the graph with vertices $\boundary_1 Q$ and edges $\alpha_1, \hdots, \alpha_i$. 

Recall that $G$ is the closure of $M - R$. $G$ consists of 3--balls containing copies of $R$ in their boundary. Let $\tild{\Gamma}_i$ be the intersection of $\Gamma_i$ with $\boundary G$, so that $\tild{\Gamma}_i$ contains two copies of each edge of $\Gamma_i$. 

Suppose that $G'$ is a component of $G$ which is adjacent to $\boundary_- M$. A \defn{horizontal disc} in $G'$ is a properly embedded disc $E$ in $G'$ with the following properties:
\begin{itemize}
\item if $\tild{P}$ is a copy in $\boundary G'$ of the closure of a component of $(P \cup B) - T$ adjacent to $\boundary_- G'$ then $E \cap \tild{P}$ is a single edge joining distinct components of $\tild{P} \cap T$.
\item $D \cup A$ does not contain any edges of $\boundary E$.
\item If $\tild{B}$ is a copy in $\boundary G'$ of a component of $B - T$, then $\boundary E \cap \tild{B}$ is connected and is the union of at most two edges. If $\boundary E \cap \tild{B}$ is a single edge, it joins $T_v \cap \tild{B}$ to $T_0 \cap \tild{B}$. If it is the union of two edges, one edge joins $T_v \cap \tild{B}$ to $T_s \cap \tild{B}$ and the second edge joins $T_s \cap \tild{B}$ to $T_0 \cap \tild{B}$.
\end{itemize}

Figure \ref{Fig: Horiz Disc} depicts a horizontal disc in the case when $M = T^2 \times I$ and $T = T_v$ is a single vertical edge. If $E$ is a horizontal disc such that there exists a component $\tild{B}$ of $B - T \subset \boundary G'$ such that $E \cap \tild{B}$ is the union of two edges, then we call $E$ a \defn{crooked} horizontal disc.  Figure \ref{Fig: Crooked Horiz Disc} depicts the two possible arcs in the boundary of a crooked horizontal disc.

\begin{center}
\begin{figure}[ht]
\scalebox{0.5}{\input{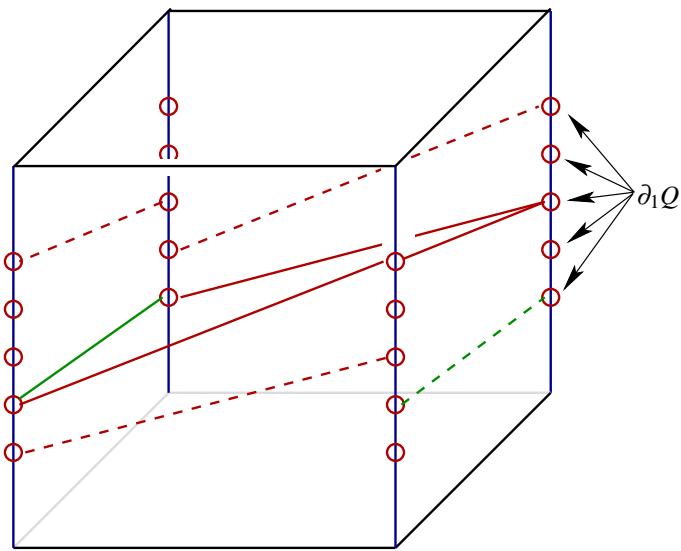_t}}
\caption{The solid edges outline a horizontal disc in $G$ which is obtained from $M = T^2 \times I$. The graph $T = T_v$ is a single vertical edge. Since every edge of $\Gamma_i$ appears twice in $\tild{\Gamma}_i$, if the boundary of the horizontal disc lies in $\Gamma_i$, both solid and dashed edges will appear in $\tild{\Gamma}_i$. If $j$ is the smallest number such that $\tild{\Gamma}_j$ contains the cycle which is the boundary of the horizontal disc,  the green edge is $\alpha_j$ and is known as the last edge of the cycle.  }
\label{Fig: Horiz Disc}
\end{figure}
\end{center}

\begin{center}
\begin{figure}[ht]
\scalebox{0.5}{\input{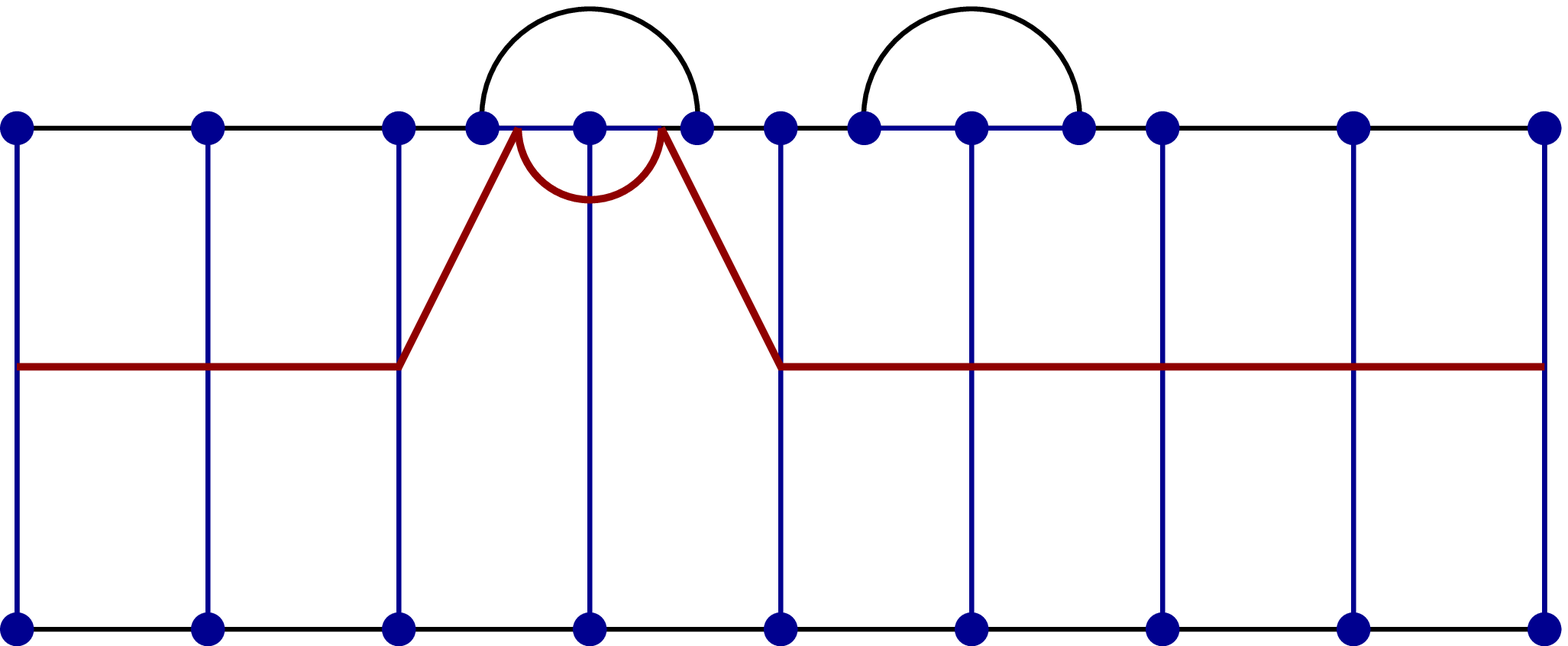_t}}
\scalebox{0.5}{\input{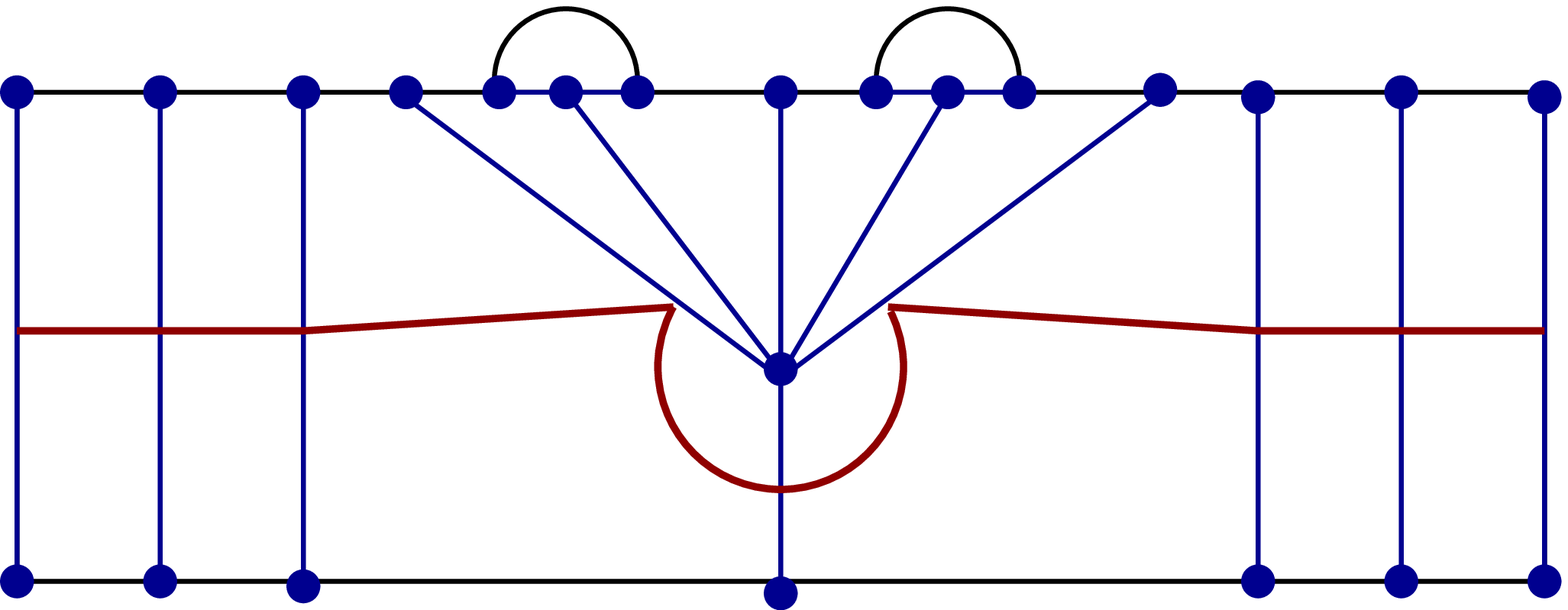_t}}
\caption{ }
\label{Fig: Crooked Horiz Disc}
\end{figure}
\end{center}

We will soon be describing a sequence of deformations of the spine $Q$. Let $Q_i$ denote the spine just after the $i$th deformation. The deformations will be designed so that, for small enough $i$, $Q_i \cap R = \Gamma_i$. Before any deformations, $Q$ is an elementary spine and so contains no horizontal discs. The spine $Q_i$ for $i \geq 1$, may not be an elementary spine. It will, however, be the union of horizontal discs $Q^2_i$ and a graph $Q^1_i$. Let $Q_{Gi} = Q_i \cap G$, $Q^1_{Gi} = Q^1_i \cap G$, and $Q^2_{Gi} = Q^2_i \cap G$. The graph $Q^1_{Gi}$ contains two copies of each edge of $\Gamma_i$.

Define a boundary compressing disc $D_i$ to be a \defn{splendisc} (short for ``splendid disc'') if:
\begin{itemize}
\item either $i = 1$ or $D_{i-1}$ is a splendisc;
\item we can slide the edges of $Q^1_{Gi} - \Gamma_{i-1}$ and isotope $D_i$ so that $\delta_i$ does not meet an edge of $Q_{Gi}$ more than once; and
\item if after performing the previous operations, $\delta_i$ is disjoint from $Q^1_{Gi} - \Gamma_{i-1}$ then $D_i$ is a horizontal disc.
\end{itemize}

If $D_i$ is a splendisc, we always assume that the sliding and isotoping in the definition of ``splendisc'' have been performed. Let $k$ be the largest number such that $D_k$ is a splendisc.

We perform the $i$th deformation of $Q$ along $D_i$  for $i \leq k$ as follows: 
\begin{itemize}

\item When $\delta_i$ meets an edge $e$ of $Q - \tild{\Gamma}_{i-1}$, we slide the edge $e$ along the arcs $\delta_i - e$ and isotope along the disc $D_i$ to push $e$ in $R$. Then $e$ is in place of $\alpha_i$. We call this a \defn{type (i)} deformation. 

\item If $D_i$ is a horizontal disc, we extend $Q$ along the disc $D_i$. We call $\alpha_i$ the \defn{last edge} of $D_i$. This is a \defn{type (ii)} deformation. See Figure \ref{Fig: Horiz Disc}.
\end{itemize}

\begin{remark}
We now embark on a study of the graphs $\tild{\Gamma}_i$ for $i \leq k$. We will eventually conclude that all of the discs $D_i$ are splendiscs (that is, that $k = n$.) A little more work will then show that Proposition \ref{Prop: Strengthened Perturbed} holds.
\end{remark}

We begin with two definitions. 

Suppose that $\tau$ is a component of $T_s \cap D$ embedded in a disc $D'$ of $D$. Recall that $\tau$ is a tree and that $\boundary D' \cap \tau \subset \boundary \tau$ and at most one point of $\boundary \tau$ is in the interior of $D'$. Let $\alpha$ be an arc in $D'$ joining distinct points of $\tau$ with interior disjoint from $\tau$. We say that $\alpha$ joins \defn{opposite sides} of $\tau$ if the interior of the disc in $D'$ bounded by $\alpha$ and a minimal path in $\tau$ contains the valence 1 vertex of $\tau$ in the interior of $D'$. If $\alpha$ does not join opposite sides of $\tau$, we say that $\alpha$ joins \defn{the same side} of $\tau$. See Figure \ref{Fig: Arcs Joining T}.

\begin{center}
\begin{figure}[ht]
\scalebox{0.35}{\input{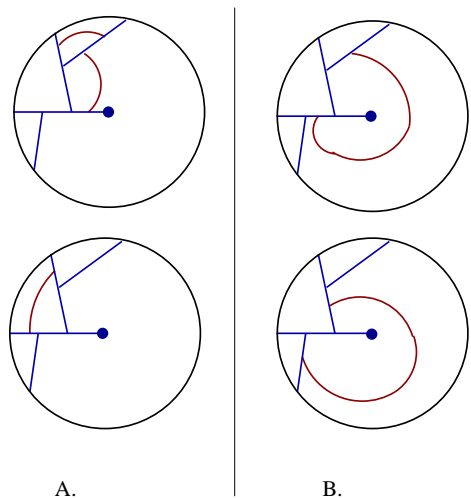_t}}
\caption{Examples of arcs in a disc of $D$. Figure A depicts arcs joining the same side of $T$ and Figure B depicts arcs joining opposite sides of $T$.}
\label{Fig: Arcs Joining T}
\end{figure}
\end{center}

Suppose that in $R \subset \boundary G$ there is an embedded disc $E$ such that 
\begin{itemize}
\item $\boundary E \cap T$ consists of a single arc $t$ and, possibly, isolated points
\item the arc $\boundary E - t$ in $R \subset M$ does not join opposite sides of a component of $T_s \cap D$
\item $\boundary E - t \subset (\Gamma_i \cup F)$ for some component $F$ of $\boundary_- C_1$
\item the interior of $E$ is disjoint from $T \cup \Gamma_i$.
\end{itemize}
We call such a disc $E$, a \defn{container disc} in $\Gamma_i$. Figure \ref{Fig: Container Discs} shows some container discs which we will make use of later.

\begin{center}
\begin{figure}[ht]
\scalebox{0.5}{\input{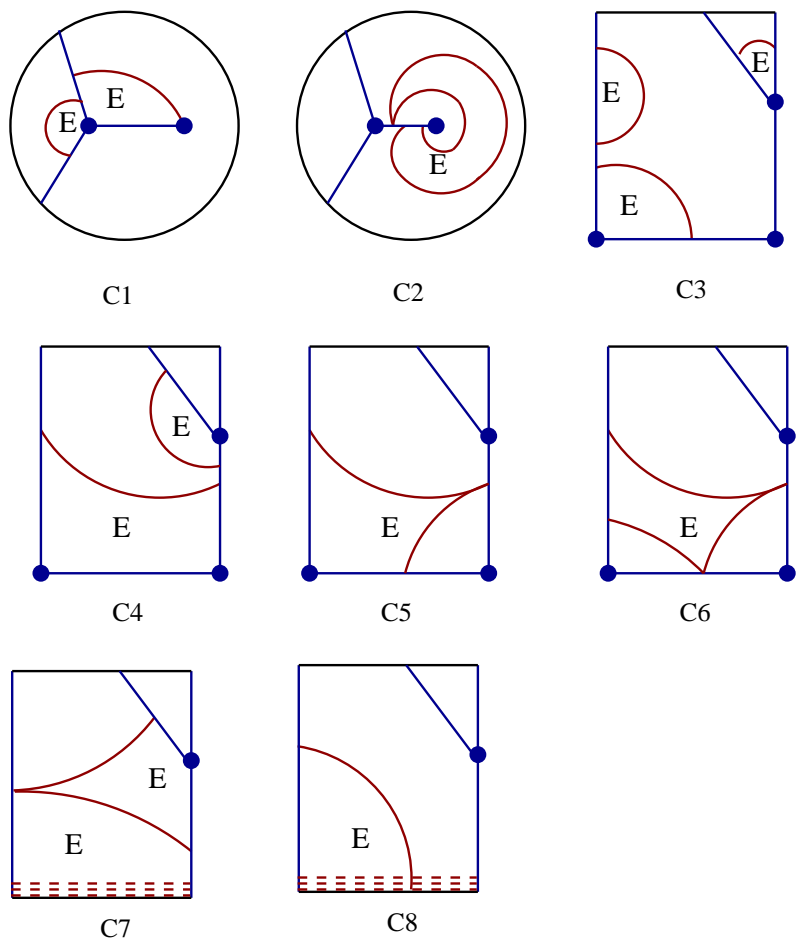_t}}
\caption{Each example of a container disc is labelled $E$. Diagrams C1 and C2 are in $D$. Diagrams C3 - C6 are in $A$. Diagrams C7 and C8 are in $B$ or $P$. In diagrams C7 and C8, the dashed red lines mean that the component of $\boundary_- M$ is in $C_1$. This list of container discs is by no means exhaustive.}
\label{Fig: Container Discs}
\end{figure}
\end{center}

\begin{lemma}\label{Lem: Container Disc}
Suppose that, for some $i \leq k$, there exists a container disc $E$ in $\Gamma_i$. Then $E_2 = E \cap C_2$ is a bridge disc for $T_2$ and there exists a disc $E_v \subset \eta(v)$ where $v \in \boundary_1 Q$ lies on $T_0$ such that one of the following occurs:
\begin{itemize}
\item $\{E_2,E_v\}$ is a perturbing pair for $(M,T)$. The interior of the arc $\boundary E \cap T$ contains a vertex of $\mb{I} \cap T_0$.
\item $\{E_2,E_v\}$ is a cancelling pair for $(M,T)$. $T_0$ consists of a single loop, $T_s$ is connected, and $M$ is a solid torus. The disc $E$ is a subset of $A$.
\end{itemize}
\end{lemma}

\begin{proof}
Let $v$ and $w$ be the endpoints of $t$.

\textbf{Case $\alpha$:} The arc $t$ does not contain any vertices of $\boundary_1 Q$ on its interior.

In this case, the disc $E_2$ is a bridge disc for $t \cap C_2$. There are discs $E_v \subset \eta(v) \subset C_1$ and $E_w \subset \eta(w) \subset C_1$ so that the pairs $\{E,E_v\}$ and $\{E, E_w\}$ are cancelling pairs for $H$ as a splitting of $(M,T)$. See Figure \ref{Fig: Container Bridge Disc}.

\begin{center}
\begin{figure}[ht]
\scalebox{0.5}{\input{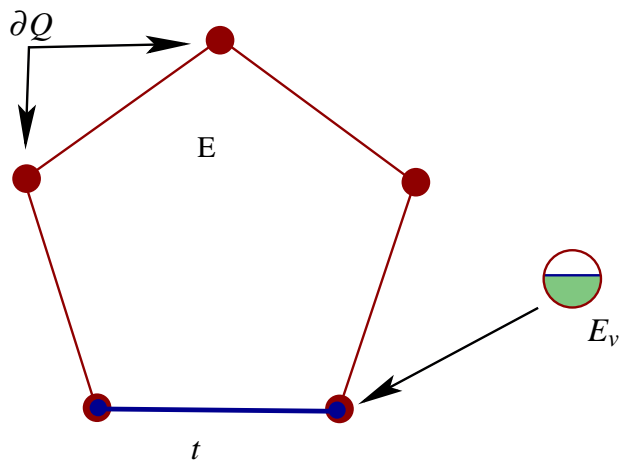_t}}
\caption{The disc $E$ and the disc $E_v$ are a cancelling pair of discs.}
\label{Fig: Container Bridge Disc}
\end{figure}
\end{center}

Suppose first that $v$ and $w$ are identified in $M$, so that $E_v = E_w$ and $\{E_2,E_v\}$ is not a perturbing pair for $H$ as a splitting of $(M,T)$. This implies that $\boundary E - t$ is not in $D$ since then $\boundary E - t$ would join opposite sides of a component of $T_s \cap D$. Since $v$ and $w$ are in the same component of the closure of $R - T \subset \boundary G$, $\boundary E - t$ lies in $A$, and $v$ and $w$ lie on a component of $T_s \cap A$ which appears twice in the same component of $A \subset \boundary G$. By hypothesis (A) and (B) this implies that $T_0$ consists of a single loop and $T_s$ is connected. This implies that $M$ is a solid torus. 

Assume that this does not occur. Hence, $v$ is not identified with $w$ in $M$ and so both $\{E_2, E_v\}$ and $\{E_2, E_w\}$ are perturbing pairs for $H$ as a splitting of $(M,T)$.

Suppose that neither $v$ nor $w$ lie on $T_0$. Then either $t \subset T_s$, $t \subset T_v$, or an entire edge of $T_0 - T_s$ (and hence an entire edge of $T$) lies in $t$. This latter possibility cannot occur since an edge of $T$ with neither endpoint on $\boundary M$ must intersect $\boundary_1 Q$, and in Case $\alpha$, we assume that $t$ does not have any vertices of $\boundary_1 Q$ in its interior. If $t \subset T_v$, then both $v$ and $w$ lie on $T_v$ and $\{E,E_v\}$ and $\{E,E_w\}$ are perturbing pairs for $(M,\mb{T})$. Likewise, if $t \subset T_s$, then either an edge of $\mb{I}$ is perturbed, or $H$ is perturbed as a splitting of $(M,\mb{T})$. Thus, either $v$ or $w$ lies on $T_0$ and the interior of $t$ intersects the interior of some edge $e$ of $T_0$. Without loss of generality, suppose $v \in e$. If $v$ is an endpoint of $e$, choose $E_v$ so that it is a bridge disc for $T_0 \cap C_1$.

If $w$ lies on $e$, then $\{E_2,E_v\}$ is a perturbing pair for $(M,\mb{T})$, contrary to the hypothesis. Thus, $w$ lies on a component $\tau$ of $T_s$. If $\tau \not \in \mb{I}$, then $\{E_2,E_w\}$ is a perturbing pair for $(M,\mb{T})$, contrary to the hypothesis. Thus, $\tau \in \mb{I}$. Since $w \not \in e$, $t$ contains the vertex $\tau \cap T_0$ in its interior, as desired.

\textbf{Case $\beta$:} The arc $t$ contains vertices of $\boundary_1 Q$ in its interior.

This case should be compared to \cite[Lemma 4.4]{HS1} and \cite[Lemma 4.6]{HS2}.

Let $v = v_0, v_1, \hdots, v_m = w$ be the vertices of $\boundary_1 Q$ lying in order on $t$. Let $t_j$ be the arc between $v_{j-1}$ and $v_j$, for $1 \leq j \leq m$. Notice that each $t_j \cap T_2$ is either a bridge edge or a bridge arc  lying in a (possibly vertical) pod of $T_2$. For the former, choose a bridge disc $E'_j$. For the latter, choose a pod disc $E'_j$ containing $t_j$. Choose the set $\{E'_j\}$ so that the discs are pairwise disjoint and so that $\mc{E} = \bigcup_j E'_j$ intersects $E$ in properly embedded arcs joining vertices of $\boundary_1 Q$. 

Isotope $\mc{E}$ along edges of $\Gamma_i \cap \boundary E$ so that no edge of $\mc{E} \cap E$ has an endpoint at a vertex of $\boundary_1 Q \cap \boundary E$ which is not on $t$. (See Figures 4.6 and 4.7 of \cite[Lemma 4.4]{HS1}.)

Out of all such collections of discs, choose $\mc{E}$ so that the number of edges $|\mc{E} \cap E|$ is minimal.  

Suppose that there is an edge of $\mc{E} \cap E$ with both endpoints at the same vertex $v_j$. Let $\rho$ be an outermost such edge. Let $E'$ be the disc cut off from $E$ by $\rho$ which is disjoint from all vertices $v_{j'} \neq v_j$. Perform a surgery on $\mc{E}$ along the disc $E'$ and discard the component disjoint from $T_2$. This reduces the number $|\mc{E} \cap E|$, contradicting the hypothesis that the number of edges of $\mc{E} \cap E$ was minimal. Thus, no edge of $\mc{E} \cap E$ has both endpoints at the same vertex.

Define an edge of $\mc{E} \cap E$ to be \defn{good} if it joins adjacent vertices. Define a vertex $v_j$ to be \defn{good} if all edges adjacent to it are good. 

\textbf{Claim 1:} No edge of $\mc{E} \cap E$ is a good edge.

Suppose that such an edge exists and let $\mu$ be an outermost good edge joining vertices $v_{j-1}$ and $v_j$. Let $E'_l$ be the disc in $\mc{E}$ containing $\mu$. Let $\epsilon_1$ be the outermost disc in $E$ which is bounded by $\mu$. The arc $\mu$ divides $E'_l$ into two discs, one of which, $\epsilon_2$, contains $t_l$. Then a slight isotopy of $\epsilon_1 \cup \epsilon_2$ produces a bridge disc for $t_l$ which intersects $E$ in fewer arcs than does $E'_l$. Notice that the disc $E'_l$ is disjoint from all other discs in $\mc{E}$. Thus, $(\mc{E} \cup E'_l) - E'_{j'}$ is a collection of bridge discs which contradicts the choice of $\mc{E}$. \qed(Claim 1)

Claim 1 implies that a vertex is good if and only if there are no adjacent edges of $\mc{E} \cap E$.

\textbf{Claim 2:} There exists a good vertex of $\boundary_1 Q$ on $t$ which is not an endpoint of $t$. Furthermore, we can arrange that one of $v$ or $w$, whichever we wish, is also a good vertex.

If $\mc{E} \cap E$ has no edges, every vertex of $\boundary_1 Q$ on $t$ is a good vertex. If such is the case, then since $v$ and $w$ are not adjacent vertices of $\boundary_1 Q$, there exists a good vertex interior to $t$. Also, both $v$ and $w$ are good vertices. Suppose, therefore, that $\mc{E} \cap E$ contains edges.

Choose an outermost edge $\mu$ of $\mc{E} \cap E$ on $E$ and let $t'$ be the subarc of $t$ which cobounds with $\mu$ an outermost disc $E'$. Think of $E'$ as being a polygon with edges consisting of $\mu$ and those $t_j$ which lie on $t'$. Since no edge of $\mc{E} \cap E$ joins adjacent vertices, $E'$ has at least one vertex $v_j$ which is not an endpoint of $\mu$. Such a vertex is a good vertex and cannot be an endpoint of $t$. 

Suppose, now, that we also wish for $v$, say, to be a good vertex. If $e$ is an edge of $\mc{E} \cap E$ adjacent to $v$, we can push the end of $e$ on $v$ across the arc $\boundary E - t$, to be an endpoint at $w$. After moving the endpoints at $v$ of all edges of $\mc{E} \cap E$ over to $w$, $v$ is a good vertex. This maneuvre does not change the fact that no edges are adjacent to $v_j$ and so $v_j$ is still a good vertex. \qed (Claim 2).

Let $v_j$ be a good vertex which is not an endpoint of $t$. Let $D'_j$ be the disc $E \cap \eta(v_j)$. The pairs $\{E'_{j},D'_j\}$ and $\{E'_{j+1}, D'_j\}$ are both perturbing pairs for $H$ as a splitting of $(M,T)$.

Suppose that $t$ lies in a component $\tau$ of $T_s$. If $\tau \not \subset \mb{I}$, then $\{E'_j,D'_j\}$ and $\{E'_{j+1},D'_j\}$ are both perturbing pairs for $(M,\mb{T})$. Suppose, therefore, that $\tau \subset \mb{I}$. If the point $\tau \cap T_0$ lies on $t$ it must be an endpoint of $t$. Thus, $v_j$ is not the point $\tau \cap T_0$. At least one of $E'_{j}$ or $E'_{j+1}$ does not contain the point $\tau \cap T_0$ in its boundary. Suppose, without loss of generality, that it is $E'_j$. Then $\{E'_j,D'_j\}$ is a perturbing pair for the edge $\tau$. This also is a contradiction. Hence $t$ does not lie in $T_s$.

A similar argument shows that $t$ does not lie in $T_v$. Hence, the interior of $t$ is not disjoint from $T_0$. Let $e$ be the edge of $T_0 - T_s$ which has interior not disjoint from $t$. If $v_j$ does not lie on $e$, then one of $t_{j-1}$ or $t_j$ is entirely disjoint from $e$. Consequently, one of $\{E'_{j},D'_j\}$ or $\{E'_{j+1},D'_j\}$ is a perturbing pair for either an edge of $\mb{I}$ or $H$ as a splitting of $(M,\mb{T})$. (Which it is depends on whether or not the component of $T_s$ on which $t_{j}$ or $t_{j+1}$ lies is or is not in $\mb{I}$.) Both possibilities contradict our hypotheses. Thus, $v_j \in e$.

If $v_j$ is an endpoint of $e$, then one of $t_1$ or $t_m$ lies on $T_s$. Without loss of generality, suppose it is $t_1$. Then $v$ is a point on $T_s$ which is not also on $T_0$. By Claim 2, we may arrange for $v$ to be a good vertex. Let $D'_v = E \cap \eta(v)$. Then $\{E'_1,D'_v\}$ is a perturbing pair for $H$ as a splitting of $(M,T)$. Since $t_1$ is completely contained in $T_s$ and since $v$ does not lie on $T_0$, $\{E'_1,D'_v\}$ is a perturbing pair for an edge of $\mb{I}$, if the component of $T_s$ containing $t_1$ is contained in $\mb{I}$; otherwise, $\{E'_1,D'_v\}$ is a perturbing pair for $H$ as a splitting of $(M,\mb{T})$. Thus, $v_j$ is not an endpoint of $e$.

Since $t$ does not lie in $T_v \cup T_s$, $E$ must lie in either $A$ or $B$. Suppose that $v_{j-1}$ or $v_{j+1}$ lies on $T_0$. If $v_{j-1}$ lies on $T_0$, then $t_{j} \subset T_0$; and if $v_{j+1}$ lies on $T_0$, then $t_{j+1} \subset T_0$. In the former case, $\{E'_{j}, D'_j\}$ is a perturbing pair for $H$ as a splitting of $(M,\mb{T})$. In the latter case, $\{E'_{j+1}, D'_j\}$ is a perturbing pair for $H$ as a splitting of $(M,\mb{T})$. Thus, neither $t_{j-1}$ nor $t_j$ lies on $T_0$, but $v_j$ does lie on $T_0$. This implies that $E$ must lie in $A$, as every edge of $T_0 \cap B$ has an endpoint on $\boundary_- M$.

The arc $\boundary E'_j \cap T$ contains a vertex $v'$ of $T_s \cap T_0$, since $v_j$ is interior to $e$ and $v_{j+1}$ lies on $T_s$. The disc $E'_j$ is a pod disc with at least two of the edges of $T_2 \cap E'_j$ incident to $v'$ belonging to $T_0$. Let $E''_j$ be the closure of the component of $E'_j - T_2$ containing those two edges. Then $E''_j$ is a bridge disc for a component of $T_0 \cap C_2$. Thus, $\{E''_j, D'_j\}$ is a perturbing pair for $H$ as a splitting of $(M,\mb{T})$. This contradicts our hypotheses. Thus, Case $\beta$ cannot occur.
\end{proof}

We next present a sequence of lemmas, applying Lemma \ref{Lem: Container Disc} to the graphs $\tild{\Gamma}_i$ in $D$, $A$, $B$, and $P$.

\begin{lemma}\label{Lem: Configs in Disc}
For $i \leq k$, any component of the intersection between $\tild{\Gamma}_i$ and $D$ is either a single point or a single edge joining distinct points on opposite sides of $T_s \cap D$.
\end{lemma}
\begin{proof}
Suppose that the lemma is not true and let $i$ be the smallest number for which the lemma fails. If $\tild{\Gamma}_i$ contains an edge joining a point of $\boundary_1 Q$ to itself, then since $\tild{\Gamma}_i$ can be achieved by deformations of $Q$ of types (i) and (ii), by Frohman's trick, $H$ would be stabilized. Similarly, $\tild{\Gamma}_i \cap D$ does not contain a closed loop.

Consider the following two possible configurations in $\Gamma_i$. (See Figure \ref{Fig: Container Discs}.)
\begin{enumerate}
\item[(C1)] An edge of $\tild{\Gamma}_i$ in a disc $D'$ of $D$ joins two vertices on the same side of $T_s \cap D'$.
\item[(C2)] There are two edges $e_1$ and $e_2$ of $\tild{\Gamma}_i$ sharing an endpoint and joining distinct points on opposite sides of $T_s \cap D'$.
\end{enumerate}

One of these two configurations can be achieved by deformations of type (i) and (ii). In each configuration, there is an obvious container disc (labeled $E$ in the figure). By Lemma \ref{Lem: Container Disc}, the arc component of $\boundary E \cap T$ contains a vertex of $\mb{I} \cap T_0$. But in neither configuration is this the case.
\end{proof}

Similarly,

\begin{lemma}\label{Lem: Bridge disc in A 1}
Suppose that $M$ is not a solid torus with $T$ a core loop and single spoke. For $i \leq k_2$, any component of the intersection between $\tild{\Gamma}_i$ and $A$ containing an edge of $\tild{\Gamma}_i$ lies in the closure $\tild{A}$ of a component of $A - T$ which is adjacent to the interior of an edge of $T_0$ and is one of the following:
\begin{enumerate}
\item A single edge joining the edge $T_0 \cap \tild{A}$ to a component of $\mb{I} \cap \tild{A}$.
\item An edge joining distinct components of $T_s \cap \tild{A}$. At least one of the components of $T_s \cap \tild{A}$ is in $\mb{I}$.
\item The union of two edges of type (1). The edges have their endpoints on $T_0$ in common, and this point is on the interior of the edge $T_0 \cap \tild{A}$. The edges have their other endpoints on distinct components of $\mb{I} \cap \tild{A}$.
\item The union of two edges. One of the edges is of type (1) and the other is of type (2). Both components of $T_s \cap \tild{A}$ belong to $\mb{I}$.
\end{enumerate}
\end{lemma}
\begin{proof}
Suppose that the lemma is not true and let $i$ be the smallest number for which it is not true. Let $e$ be an edge of $\tild{\Gamma}_i$ which lies in the closure $\tild{A}$ of a component of $A - T$. Let $v$ and $w$ be its endpoints.

Suppose, first, that $\tild{A}$ is not adjacent to the interior of an edge of $T_0$. This could only happen if edges from $T_s$ lie in the interior of a square of $A$. In fact, the endpoints of $e$ must be contained on a component $\tau$ of $T_s$ not in $\mb{I}$. The disc $\tild{A}$ has one arc of its boundary on $\boundary_+ M$ and the complementary arc lies in $\tau$. In the disc $\tild{A}$, $e$ cuts off a subdisc $E$ which is disjoint $\boundary_+ M$. By our choice of $i$, $E$ is a container disc for $\Gamma_i$. By Lemma \ref{Lem: Container Disc}, $E \cap C_2$ is a bridge disc for a component of $T_2$ and the interior of $\boundary E \cap \tau$ contains a vertex of $\mb{I} \cap T_0$. $\tild{A}$ contains at most one vertex $x$ of $T_s \cap T_0$. Since $x \in \mb{I}$, the square of $A$ containing $\tild{A}$ cannot have an edge of $T_s$ interior to the square with endpoint $x$. Consequently, $\tild{A}$ must be adjacent to the interior of an edge of $T_0$.

We think of the disc $\tild{A}$ as being a square with two edges lying in components of $T_s$, one edge on $\boundary_+ M$, and one on $T_0$. Let $\tau_1$ and $\tau_2$ be the components of $T_s \cap \tild{A}$. The only way in which $\tau_1 = \tau_2$ is if $M$ is a solid torus, and $T$ is a core loop with a single spoke. Thus, we assume that $\tau_1 \neq \tau_2$. 

The edge $e$ cuts off a disc $E \subset \tild{A}$ with boundary disjoint from $\boundary_+ M$. If the disc $E$ is a container disc for $\Gamma_i$, by Lemma \ref{Lem: Container Disc}, $e$ is an edge of type (1). If $E$ is not a container disc, it contains other edges. If $e'$ is an edge of $\tild{\Gamma}_i \cap E$ cutting off from $E$ a disc $E'$ disjoint from $e$, the disc $E'$ is a container disc and so $e'$ is an edge of type (1). One endpoint of $e$ must be on the same component of $T_s \cap \tild{A}$ as an endpoint of $e'$. This implies that one endpoint of $e$ lies on $\mb{I}$ and that $e$ is of type (1) or (2). See (C3) and (C4) of Figure \ref{Fig: Container Discs}.

Suppose that $\tild{\Gamma}_i \cap \tild{A}$ has two edges $e_1$ and $e_2$ which share at least one endpoint. If these edges share both endpoints, then $H$ is stabilized, a contradiction. Suppose that $e_1$ and $e_2$ share one endpoint $v$ and that the other endpoints, $w_1$ and $w_2$, respectively, are distinct. By the previous paragraph, both $e_1$ and $e_2$ are of type (1) or (2).

Suppose that they are both of type (1), but that $e_1 \cup e_2$ is not of type (3). Then $v$ lies on a component of $T_s$. The edges $e_1 \cup e_2$ cut off a disc $E$ from $\tild{A}$ with $\boundary E \cap T \subset T_0$. Since $i$ is the smallest number for which the lemma fails, $E$ is a container disc. This contradicts Lemma \ref{Lem: Container Disc}. Thus, if $e_1$ and $e_2$ are both of type (1), $e_1 \cup e_2$ is of type (3). See (C5) of Figure \ref{Fig: Container Discs}.

Suppose that $e_1$, say, is of type (1) and $e_2$ is of type (2). The disc cut off from $E$ by $e_1 \cup e_2$ which has boundary containing $e_1 \cup e_2$ is either a container disc or contains an edge $e_3$ of $\tild{\Gamma}_i \cap \tild{A}$ of type (1) with an endpoint on the component of $T_s \cap \tild{A}$ containing $w_2$. Thus, both components of $T_s \cap \tild{A}$ belong to $\mb{I}$. This implies that $e_1 \cup e_2$ is of type (4).

Suppose that both $e_1$ and $e_2$ are of type (2). Then, by our choice of $i$, they cut off from $\tild{A}$ a container disc $E$ having $\boundary E \subset T_s$. This contradicts Lemma \ref{Lem: Container Disc}.

Thus, the union of two edges sharing an endpoint is either of type (3) or (4).

Finally, suppose that a component contains three edges, $e_1$, $e_2$, and $e_3$. Let $E \subset \tild{A}$ be a disc cut off by $e_1 \cup e_2 \cup e_3$ which is disjoint from $\boundary_+ M$ and which contains at least two of $\{e_1, e_2, e_3\}$. If $e_1$, $e_2$, and $e_3$ form a closed loop in $\tild{A}$, then since these are all distinct edges in $\Gamma_i$, a meridian for any of these edges intersects $\boundary E$ in a single point, showing that $H$ is stabilized. Thus, $E$ is a container disc for $\Gamma_i$. However, the arc component of $\boundary E \cap T$ lies solely in $T_0$ or $T_s$. This contradicts Lemma \ref{Lem: Container Disc}. See (C6) of Figure \ref{Fig: Container Discs}.
\end{proof}

\begin{lemma}\label{Lem: Bridge disc in A 2}
Suppose that $\tild{A} \subset A$ is in a component of $G$ adjacent to $\boundary_- M$. Suppose that $\tild{\Gamma}_i \cap \tild{A}$ has a component which is either an edge of type (1) with neither endpoint in $D$, or of type (4), with neither endpoint in $D$. Then there exist bridge discs $E_1 \subset \tild{A}$ and $E_2 \subset \eta(v)$ which form a perturbing pair for $H$ as a splitting of $(M,T)$. The vertex $v$ of $\boundary_1 Q$ lies on $T_0$. The interior of the arc $\boundary E_1 \cap T$ contains a vertex of $\mb{I} \cap T_0$.
\end{lemma}
\begin{proof}
This follows directly from Lemma \ref{Lem: Container Disc} and Lemma \ref{Lem: Bridge disc in A 1}.
\end{proof}

The proof of the next lemma is similar to the proof of Lemma \ref{Lem: Bridge disc in A 1} and so we omit it. The configurations (C7) and (C8) of Figure \ref{Fig: Container Discs} are relevant for its proof.

\begin{lemma}\label{Lem: Bridge disc in B 1}
For $i \leq k_2$, any component of the intersection between $\tild{\Gamma}_i$ and $B$ lies in the closure $\tild{B}$ of a component of $B - T$ adjacent to $\boundary_- M$ and is one of the following: 
\begin{enumerate}
\item an edge joining distinct components of $T \cap \tild{B}$
\item an edge joining the edge $\mb{I} \cap \tild{B}$ to the adjacent edge of $T_0$.
\item the union of two edges, one of each of the above types.
\end{enumerate}
\end{lemma}

Finally we turn to $P$. Again the proof is similar to the proof of Lemma \ref{Lem: Bridge disc in A 1} and so we omit it. As in the previous lemma, configurations (C7) and (C8) of Figure \ref{Fig: Container Discs} are relevant for its proof.

\begin{lemma}\label{Lem: Bridge disc in P}
For $i \leq k_2$, any component of the intersection between $\tild{\Gamma}_i$ and $P$ lies in the closure $\tild{P}$ of a component of $P - T$ adjacent to $\boundary_- M$ and is an edge joining distinct components of $T_v \cap \tild{P}$.
\end{lemma}

We assemble the previous results to produce:

\begin{lemma}\label{Lem: Cycles 1}
For $i \leq k$, if $\sigma$ is a cycle in $\tild{\Gamma}_i$ then either:
\begin{itemize}
\item $\sigma$ bounds a horizontal disc in $G$, or
\item $M$ is a solid torus, $T$ consists of a core loop $T_0$ and a single spoke $T_s$, and $\sigma$ consists of a single edge in $A$ joining the vertex of $T$ to itself.
\end{itemize}
\end{lemma}
\begin{proof}
Suppose, first, that $M$ is a solid torus, and $T$ consists of a core loop and a single spoke. Let $j$ be the smallest such number such that $\tild{\Gamma}_j$ contains a container disc. The existence of an edge in $A$ is enough to guarantee that there exists a container disc. By Lemma \ref{Lem: Configs in Disc}, the container disc $E$ is contained in $A$. By Lemma \ref{Lem: Container Disc}, $\boundary E - T$ is an edge joining the vertex of $T$ to itself. This edge appears twice in $\boundary G$, and so $j = i$ and $\sigma$ is the union of both copies of this edge. Henceforth, we assume that this does not occur.

Let $G'$ be the component of $G$ containing $\sigma$. Let $E$ be a disc in $G'$ with boundary $\sigma$. By sliding and isotoping $Q$, we may assume that $Q$ is disjoint from the interior of $E$. (See the proof of the Claim in \cite[Lemma 4.3]{HS2}.) Suppose that in $\Gamma_i$, $\sigma$ runs across an edge $e$ exactly once. Then $E$ and a meridian of the arc $e$ form a stabilizing pair for $H$. Since we are assuming that $H$ is not stabilized, $\sigma$ cannot run across any edge of $\Gamma_i$ exactly once.

Suppose that at least one edge of $\sigma$ lies in $D$. By Lemma \ref{Lem: Configs in Disc}, each such edge joins distinct points on opposite sides of $T_s \cap D$ and no two edges have endpoints in common. Choose $e_0$ to be an edge of $\sigma$ lying in a component $D'$ of $D$, chosen so that among all such edges $e_0$ is outermost. Let $v$ and $w$ be the endpoints of $e_0$. Since no component of $\tild{\Gamma}_i \cap D$ has more than one edge, there exist edges $e_{-1}$ and $e_{+1}$ in $A \cup B$ so that $v \in \boundary e_{-1}$, $w \in \boundary e_{+1}$, and $e_{-1} \cup e_0 \cup e_{+1}$ lies in $\sigma$. Since $v \neq w$ in $\Gamma_i$, the edges $e_{-1}$ and $e_{+1}$ are distinct in $\Gamma_i$. Furthermore, the components $\tild{A}_1$ and $\tild{A}_2$ of $(A \cup B) - T$ containing $e_{-1}$ and $e_{+1}$ are identified in $M$. Each component of $(A \cup B) - T$ appears exactly twice in $\boundary G$. Let $e_j \in \{e_{-1},e_{+1}\}$ be the edge which is closest to the edge $\tild{A}_1 \cap \tild{A}_2$. The cycle $\sigma$ runs exactly once across $e_j$ in $M$. Thus, $E$ and a meridian of $e_j$ form a stabilizing pair of discs for $H$, a contradiction. Thus, $D$ contains no edge of $\sigma$. 

Suppose that $A$ contains an edge of $\sigma$. Since $D$ contains no edge of $\sigma$, $\sigma$ must be contained in a component of $G'$ adjacent to $\boundary_- M$. The cycle $\sigma$ has a subpath $\rho$ contained in two adjacent components $\tild{A}$ and $\tild{A}'$  of $A - T \subset \boundary G$ which are not identified in $M$. Thus, $\rho \cap \tild{A}$ and $\rho \cap \tild{A}'$ are components of $\tild{\Gamma}_i \cap A$ satisfying the hypothesis of Lemma \ref{Lem: Bridge disc in A 2}. Let $\{E_1, E_2\}$ and $\{E'_1, E'_2\}$ be the bridge discs in $\tild{A}$ and $\tild{A}'$ respectively provided by Lemma \ref{Lem: Bridge disc in A 2}. See Figure \ref{Fig: Matching Bridge Discs}. Notice that since $E'_1 \subset \tild{A}'$, $\boundary E'_1$ is disjoint from $E_2$. Let $E = E_1 \cup E'_1$ and push $E$ slightly off $\mb{I}$, so that $E$ is a bridge disc for $T_0 \cap C_2$. Then $\{E, E_2\}$ is a perturbing pair of discs for $(M,\mb{T})$, contrary to our hypothesis.

\begin{center}
\begin{figure}[ht]
\scalebox{0.5}{\input{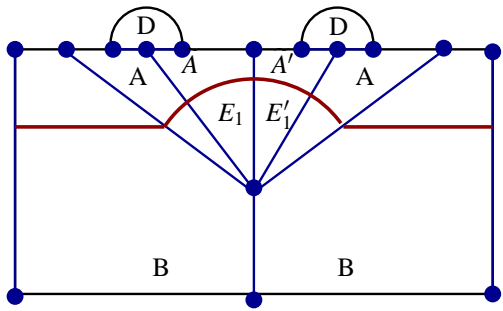_t}}
\caption{The bridge discs $E_1$ and $E'_1$. The arc $\rho$ is in red.}
\label{Fig: Matching Bridge Discs}
\end{figure}
\end{center}

Thus, no edge of $\sigma$ is contained in $A \cup D$. Suppose that $B \subset \boundary G$ contains an edge of $\sigma$. Let $\rho \subset \sigma$ be a path in some component $\tild{B}$ of $B - T$. By Lemma \ref{Lem: Bridge disc in B 1}, $\rho$ appears as in Figure \ref{Fig: Horiz Disc} or Figure \ref{Fig: Crooked Horiz Disc}. By Lemma \ref{Lem: Bridge disc in P}, $\sigma$ bounds a horizontal disc in $G$.
\end{proof}

\begin{lemma}[{\cite[Theorems 1.1 and 3.1]{HS1}}]
If $M$ is a solid torus and $T$ is a core loop with a single spoke, then either $H$ is perturbed as a splitting of $(M,\mb{T})$ or $\mb{T}$ has a removable cycle.
\end{lemma}
\begin{proof}
Suppose not. By Lemma \ref{Lem: Container Disc}, since for some $i$ an edge of $\tild{\Gamma}_i$ is contained in $A$, there exists $i$ so that $\tild{\Gamma}_i$ contains a cycle. By Lemma \ref{Lem: Cycles 1}, that cycle is a single edge joining the vertex of $T$ to itself. Since $Q_{Gi}$ is a tree, we can shrink edges of $Q$ so that $Q$ is a single loop containing the vertex of $T$. By Lemma \ref{Lem: Container Disc}, the edge $Q_{Gi} \cap A$ cuts off a bridge disc $E$ for a component of $T_0 - H$. There is a disc $E_v$ such that $\{E,E_v\}$ is a cancelling pair of discs for $T_0$. Furthermore, a meridian of the edge $Q_{Gi} \cap A$ is disjoint from $E_v$ and intersects $\boundary E$ in a single point. Thus, $T$ has a removable cycle. Since every edge of $\mb{I}$ has an endpoint on $\boundary M$, $\mb{T}$ also has a removable cycle.
\end{proof}

Henceforth we assume that if $M$ is a solid torus, $T$ is not a core loop with a single spoke.

\begin{lemma}\label{Lem: Cycles 2}
For $i \leq k$, if $\sigma$ is a cycle in $\tild{\Gamma}_i$ then $\sigma$ bounds a horizontal disc of $Q_{Gi}$.
\end{lemma}

This proof of this lemma should be compared to \cite[Lemma 4.3]{HS2}. (Note the typographical error in the statement of that lemma: ``vertical'' should be ``horizontal''.) In particular, we refer the reader to that lemma for some of the details of the argument.

\begin{proof}
By the proof of Lemma \ref{Lem: Cycles 1}, $\sigma$ bounds a horizontal disc $E$ in $G'$ which has interior disjoint from $Q^1_G$. Since $H$ is not stabilized, $\sigma$ does not run across any edge of $\Gamma_i$ exactly once. Let $F$ be the component of $\boundary_- M$ adjacent to the component of $G$ containing $E$. If $F \neq S^2$, in $M$, $E$ glues up to be a surface $F^*$ parallel to $F$. If $F = S^2$, then in another component of $G$, there is a horizontal disc $E'$ such that $E \cup E'$ is a 2--sphere $F^*$ in $M$ parallel to $F$. Let $M'$ be the closure of the component of $M - F^*$ containing $F$. Notice that $M'$ is homeomorphic to $F \times I$. If $E$ is not a crooked horizontal disc, then $T \cap M'$ consists of vertical edges which are subsets of $T_v$. If $E$ is a crooked horizontal disc, then $T \cap M'$ is the union of vertical edges and bridge edges. Each bridge edge is a subset of $\mb{I} \subset T$.

Perform surgery on $H$, using $E$. This cuts $H$ into two surfaces. One of these surfaces $H'$ lies in $M'$ and is a Heegaard splitting for $(M',T \cap M')$. If $E$ is not a crooked horizontal disc, then by \cite{HS2}, $H'$ is either stabilized or trivial. If $E$ is a crooked horizontal disc, then by \cite{HS2}, $H'$ is either stabilized or trivial as a splitting of $(M',\mb{T} \cap M')$.

If $H'$ is stabilized as a splitting of $(M',T \cap M')$ or $(M',\mb{T} \cap M')$ then $H$ is stabilized as a splitting of $(M,T)$ or $(M,\mb{T})$. If $H'$ is trivial of type II as a splitting of $(M',T \cap M')$ or $(M',\mb{T} \cap M')$, then $H$ is stabilized as a splitting of $(M,T)$ or $(M,\mb{T})$. If $H'$ is trivial of type I as a splitting of $(M',T \cap M')$ or $(M',\mb{T} \cap M')$ then $H$ is boundary stabilized as a splitting of $(M,T)$ or $(M,\mb{T})$. Each of these possibilities contradicts our hypotheses.
\end{proof}

\begin{corollary}
$k = n$. That is, every disc $D_i$ is a splendisc.
\end{corollary}
\begin{proof}
Suppose that $k < n$ so that $D_{k + 1}$ is not a splendisc. Then $\delta_{k + 1}$ contains an arc $\sigma$ such that $\sigma$ is a cycle in $Q$. By an isotopy of $\delta_{k + 1}$, we can assume that $\sigma$ does not bound a horizontal disc or crooked horizontal disc in $Q^2_G$. Thus, by Lemma \ref{Lem: Cycles 2}, the cycle $\sigma$ must contain an edge $e$ of $Q^1_G$. Then, by the proof of \cite[Lemma 4.2]{HS1}, $H$ is stabilized.
\end{proof}

For the remainder, we assume that the $n$th deformation of $Q$ has been performed, so that $R \cap C_2$ is $T_2$--$\boundary$--incompressible. 

\begin{lemma}\label{Lem: No edges in D}
$D$ contains no edges of $\Gamma_n$.
\end{lemma}

\begin{proof}
Let $D'$ be a disc in $D$ and let $x = D' \cap T_0$. Let $\tau = T_s \cap D'$. By Lemma \ref{Lem: Configs in Disc}, each edge of $\Gamma_n$ joins distinct points on opposite sides of $\tau$. If $x$ is not of valence 1 in $\tau$, then no edge in $D$ joins opposite sides of $\tau$. Thus, we may assume that $x$ is a vertex of valence 1 in $\tau$.

\textbf{Case $\alpha$:} Suppose that $x \not \in \boundary_1 Q$.  

If $\tau$ is not disjoint from $\boundary_1 Q$, then the component of $\tau - \Gamma_n$ which contains $x$ must be a pod leg in $C_2$. This implies that the component of $\tau - \Gamma_n$ which contains $x$ is a single edge of $T_2$. Hence, the component of $D' \cap C_2$ containing $x$ does not contain an adjacent pod handle. Thus, by Proposition \ref{Prop: surface classification}, the component is a surface of type (3). This implies that there is an edge of $\Gamma_n$ in $D'$ joining one point to itself. This, however, contradicts Lemma \ref{Lem: Configs in Disc}. Thus, $\tau$ and $D'$ are disjoint from $\boundary_1 Q$. But $D'$ is an essential disc in $M$ and $Q$ is a spine for $M$ and so this is also an impossibility.

\textbf{Case $\beta$:} Suppose that $x \in \boundary_1 Q$.

If $D'$ contains an edge $e$ of $\Gamma_n$, such an edge joins distinct points of $\tau$. Choose $e$ so that no point of $\boundary_1 Q$ on $\tau$ which is an endpoint of an edge of $\Gamma_n$ in $D'$ is closer to $x$ than an endpoint of $e$. Let $q \neq x$ be a point of $\boundary_1 Q$ which can be joined by a minimal path $\rho \subset \tau$ to $x$ with interior disjoint from $\boundary_1 Q$. Let $D''$ be the closure of the component of $D' - (T \cup \Gamma_n)$ which contains $\rho$. The surface $D'' \cap C_2$ has at least one boundary component ($\boundary \eta(x)$) lying completely on $H$. The other boundary component has one arc on $T_2$ and one arc on $H$. Such a surface does not appear in the conclusion to Proposition \ref{Prop: surface classification}, and so $D'$ does not contain an edge of $\Gamma_n$.
\end{proof}

We are now in a position to conclude the proof of Proposition \ref{Prop: Strengthened Perturbed}.

By Lemma \ref{Lem: No edges in D}, $D$ contains no edges of $\Gamma_n$. Hence, before any deformations of $Q$, $D \cap C_2$ is $T_2$--$\boundary$--incompressible. If $T_0$ has an edge, $D$ contains a disc $D'$ with boundary essential on $\boundary_+ M$. By Corollary \ref{Cor: Incomp Disc implies Vert. Cut Disc}, $D'$ is a vertical cut disc. This contradicts the hypotheses of the Proposition. Thus, $T_0$ has no edges and so either $M = B^3$ or $M = \boundary_- M \times I$. If $M = B^3$, then $R = D$ and so we are not in Case 2, but rather Case 1 which has already been completed. Hence, $M = \boundary_- M \times I$, $T = T_v$ and $R = P$. 

Let $\tild{P}$ be the closure of a component of $P - T_v$. By Lemma \ref{Lem: Bridge disc in P}, each component of $\tild{P} \cap \Gamma_n$ containing an edge is a single edge joining distinct components of $T_v \cap \tild{P}$. Suppose that $e$ is such an edge, chosen so that out of all such edges it is the closest to $\boundary_- M$. Let $\tild{P}_0$ be the component of $\tild{P} - \Gamma_n$ between $e$ and $\boundary_- M$. Consider, $\tild{P}_0 \cap C_2$. If $\boundary_- M \subset C_1$, $\tild{P}_0 \cap C_2$ is a surface which we can think of as a square having two edges on $T_v$ and two edges on $H$. This surface is supposed to be $T_2$--$\boundary$--incompressible, but it does not appear in the conclusion of Proposition \ref{Prop: surface classification}. Thus, if $\boundary_- M \subset C_1$, $P$ contains no edges of $\Gamma_n$ and so before any deformations of $Q$, $P \cap C_2 = R \cap C_2$ is $T_2$--$\boundary$-incompressible. This was Case 1, which has already been completed. Consequently, we may assume that $\boundary_- M \subset C_2$.

Suppose that $\tild{P}$ has a portion of its boundary on $\boundary_+ M$. Since $\boundary_+ M \cup \boundary_- M \subset C_2$, by Proposition \ref{Prop: surface classification}, $\tild{P}$ must contain an edge of $\Gamma_n$. In fact, the argument of the previous paragraph applied to a component $\tild{P}_0$ between two adjacent edges in $\tild{P}$ shows that $\tild{P}$ contains exactly one edge of $\Gamma_n$. Examining the surfaces in the conclusion of Proposition \ref{Prop: surface classification}, shows that, in fact, $\Gamma_n \cap \tild{P}$ is that unique edge. (In other words, apart from the endpoints of the edge, there are no other vertices of $\boundary_1 Q$ on $\tild{P}$.)

We will now show that $\tild{\Gamma}_n$ is a cycle in $P$ if $\boundary_- M \neq S^2$ and two cycles in $P$ (one in each component of $P \subset \boundary G$) if $\boundary_- M = S^2$. 

Suppose that a component $\tau$ of $T_v$ has a vertex. Let $v$ be a vertex of $\tau$ adjacent to the closure $\tild{P}$ of a component of $P - T$ with boundary on $\boundary_+ M$. Since $\tild{P}$ contains a single edge of $\Gamma_n$, and since every edge of $T_2$ has at least one vertex on $\boundary C_2$, $v$ is the unique vertex of $\tau \cap \tild{P}$. Let $h$ be the edge of $T_v$ with one endpoint on $\boundary_- M$ and the other endpoint at $v$. Let $e$ be the edge $\Gamma_n \cap \tild{P}$.

By assumption (NPH), $h$ is not a pod handle of $T_2$. Thus, $h$ must contain a vertex of $\boundary_1 Q$. Since $h \subset \boundary \tild{P}$, $h$ contains an endpoint of $e$. This implies that if $P' \neq \tild{P}$ is the closure of a component of $P - T$ containing $h$, then the edge $\Gamma_n \cap P'$ and the edge $e \subset \tild{P}$ share an endpoint. Notice also that if two components of $P - T$ are adjacent along a vertical edge of $T_v$, then the edges of $\Gamma_n$ in those two components must also share an endpoint. Hence, $\Gamma_n$ consists of a cycle and, possibly, vertices of $\boundary_1 Q$ which lie in the boundaries of components of $P - T$ not adjacent to $\boundary_- M$.

We return to the consideration of the vertex $v \in \tild{P}$. Let $P'$ be the closure of a component of $P - T$ containing $v$ which is not adjacent to $\boundary_- M$. $P'$ cannot contain an edge of $\Gamma_n$ and $P' \cap C_2$ must appear in the conclusion of Proposition \ref{Prop: surface classification}. Thus, $\boundary P'$ contains a single point of $\boundary_1 Q$. Choosing $P'$ so that it is adjacent to $\tild{P}$ shows that that point must be $v$. 

Let $D' \subset P$ be a disc such that $\boundary D'$ is the union of an arc in $\boundary_+ M$ and an arc in $P$ which intersects $h$ once and is otherwise disjoint from $T$. Construct $D'$ so $T \cap D' \subset \tau$. Then $T' = \tau \cap D'$ satisfies the hypotheses of Lemma \ref{Lem: Tree Lemma} (in the appendix), with $\mc{P} = \boundary_1 Q$. Thus, by Lemma \ref{Lem: Tree Lemma}, $\boundary_1 Q \cap \tau = v$. Thus, $\tild{\Gamma}_n \cap P$ is a single cycle if $\boundary_- M \neq S^2$ and two cycles if $\boundary_- M = S^2$. 

By Lemma \ref{Lem: Cycles 2}, each cycle of $\tild{\Gamma}_n$ bounds a non-crooked horizontal disc in $G$. Since $\boundary_- M \subset C_2$, $C_1$ is the regular neighborhood of a surface parallel to $\boundary_- M$. This implies that $H$ is not connected. This is not possible since $H$ is a Heegaard surface. Thus, $M \neq \boundary_- M \times I$.
\end{proof}

\section{The proof of Theorem \ref{Main Theorem}}

\textbf{Proof of Theorem \ref{Main Theorem}} We will continue to assume that condition (NPH) is satisfied. The proof is by induction on $-\chi(\boundary_+ M)$.

\textbf{Case 1:} $M$ does not have a vertical cut disc.

If $T$ satisfies (A)-(C) in Section \ref{sec:R}, then the result follows from Proposition \ref{Prop: Strengthened Perturbed} letting $\mb{I} = \nil$ so suppose that $T$ does not satisfy at least one of (A)-(C).

Let $\mc{F}$ be the union of the components of $\boundary_- M$ which are disjoint from $T_v$. Let $\mc{V}$ be the union of vertices of $T_0$ which are disjoint from $T_s$. Let $\mc{E}$ be the union of edges of $T_0$ with zero or two endpoints on $\boundary_- M$ which are disjoint from $T_s$. 

\begin{lemma}\label{Lem: Adding Edges}
Either it is possible to form a new graph $\wihat{T}$ by adding disjoint edges to $T$ so that $(M,\wihat{T})$ satisfies the hypotheses of Theorem \ref{Main Theorem} and also assumptions (NPH) and (A) - (C), or $T$ has a monotonic interior edge.
\end{lemma}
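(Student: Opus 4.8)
The plan is to repair the failing conditions among (A)--(C) by attaching a small number of carefully chosen edges to $T$, treating (C), then (B), then (A) in that order; the only situation in which this cannot be carried out will force a monotonic interior edge (or a vertical cut disc, which is excluded). Recall that $\mc{F}$, $\mc{V}$, $\mc{E}$ record the failures of (C), (B), (A) respectively and that (NPH) is in force. To fix (C), for each component $F$ of $\boundary_- M$ in $\mc{F}$ use the product region between $\boundary_+ M$ and the frontier of $\eta(\boundary_- M \cup T)$ to choose an embedded arc $\psi_F$ from a point of $F - T$ to $\boundary_+ M$ which is disjoint from $T$ and from the other new arcs and meets $H$ transversally in exactly one point; then $\psi_F \cap C_1$ and $\psi_F \cap C_2$ are single vertical arcs. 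Setting $\wihat{T} = T \cup \bigcup_F \psi_F$ and keeping $\wihat{T}_0 = T_0$ makes each such $F$ adjacent to a component of $\wihat{T}_v$, leaves $T_0$, its spokes, and hence (A) and (B) untouched, preserves (NPH) (a vertical edge is not a pod handle), and preserves the hypothesis of Theorem~\ref{Main Theorem}, since attaching $1$--handles along edges running out to $\boundary_+ M$ leaves the frontier of $\eta(\boundary_- M \cup \wihat{T})$ a copy of $\boundary_+ M - \inter{\eta}(\wihat{T})$.

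To fix (B), for each interior vertex $v$ of $T_0$ in $\mc{V}$, which lies in $\interior C_1$ or $\interior C_2$, attach an edge $\sigma_v$ from $v$ to $\boundary_+ M$. If $v \in C_2$, route $\sigma_v$ inside $C_2$: since a handle of the pod at $v$ running to $\boundary_+ M$ would itself be a spoke (so $v \notin \mc{V}$) and (NPH) forbids a handle to $\boundary_- M$, the component of $T_2$ at $v$ is a bridge edge or bridge pod, so $\sigma_v$ becomes its handle and is adjacent to $\boundary_+ M$. If $v \in C_1$, route $\sigma_v$ across $H$ exactly once, contributing an extra leg to the bridge pod at $v$ in $C_1$ and a vertical arc in $C_2$. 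In either case $\sigma_v$ is a new spoke, $v$ now carries a spoke, (NPH) persists (any pod handle created is adjacent to $\boundary_+ M$), the $\boundary_+ M$--parallelism is preserved as before, and (A) and (C) are unaffected.

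To fix (A), let $e$ be an edge of $T_0$ with zero or two endpoints on $\boundary_- M$ and no spoke on its interior; the previous two steps left $\mc{E}$ unchanged. A trivially embedded graph contains no edge lying in a single compressionbody both of whose endpoints are on the negative boundary, and none both of whose endpoints are interior vertices, so $|e \cap H| \geq 1$. If $|e \cap H| = 1$ and $e$ has no endpoint on $\boundary M$, then $e$ is a monotonic interior edge and we are done. If $|e \cap H| = 1$ and both endpoints of $e$ lie on $\boundary_- M$, then a meridian of $e$ on the $C_1$--side of $e \cap H$, capped off in the product region by an annulus running to $\boundary_+ M$ and meeting $H$ in a single curve, is a vertical cut disc for $M$ --- contrary to the standing hypothesis of this case --- so this does not occur. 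Hence $|e \cap H| \geq 2$; the subarc $\beta$ of $e$ between two consecutive points of $e \cap H$ is a bridge arc in some $C_i$, and we attach a spoke $\sigma_e$ at an interior point $x$ of $\beta$, routed inside $C_i$ toward $\boundary_+ M$ and crossing $H$ once more when $i = 1$. Then $x$ becomes a bridge pod (if $i = 1$) or a vertical pod whose handle $\sigma_e$ is adjacent to $\boundary_+ M$ (if $i = 2$), so bridge position, (NPH), and the $\boundary_+ M$--parallelism are again preserved; $e$ stays a single edge of $T_0$ with $x$ an allowed valence--two vertex and now has a spoke on its interior. Doing this for every edge of $\mc{E}$ yields the desired $\wihat{T}$.

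The main obstacle is the analysis when $|e \cap H| \leq 1$: one must show a trivially embedded graph cannot realize the forbidden local configurations (so that $|e\cap H|\geq 1$ always holds), and then, in the remaining case $|e \cap H| = 1$ with both endpoints of $e$ on $\boundary_- M$, actually produce the vertical cut disc; this is the only place the Case~1 hypothesis ``$M$ has no vertical cut disc'' is used. A secondary point needing care is verifying at each addition that the new edge is genuinely trivially embedded in $C_1$ and $C_2$ --- in particular that a handle routed out to $\boundary_+ M$ can be made vertical in the appropriate $\boundary_- \times I$ piece --- which follows from the product structure near $\boundary_+ M$ guaranteed by the parallelism hypothesis.
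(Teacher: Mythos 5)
Your overall strategy is the same as the paper's (for each deficient $F \subset \mc{F}$, $v \in \mc{V}$, $\epsilon \in \mc{E}$ attach an edge running to $\boundary_+ M$, with the monotonic interior edge arising from an edge of $\mc{E}$ that has no bridge arc), but two of your explicit constructions have problems. First, in your step for (C) you assert that $\psi_F$ can be chosen to meet $H$ transversally in exactly one point, with $\psi_F \cap C_1$ and $\psi_F \cap C_2$ single vertical arcs. This is impossible whenever the component $F$ lies in $\boundary_- C_2$: since $\boundary_+ M \subset \boundary_- C_2$ as well, any arc from $F$ to $\boundary_+ M$ meets the separating surface $H$ an even number of times, and the minimal trivially embedded configuration is vertical--bridge--vertical with two intersections (this is exactly what the argument of \cite{HS1} that the paper cites produces). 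Nothing in the hypotheses forces components of $\mc{F}$ to lie on the $C_1$ side -- you allow both sides for the vertices $v \in \mc{V}$ but not here -- so as written this step of the construction cannot be carried out.

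Second, in your step for (A), the sub-case $|e \cap H| = 1$ with both endpoints of $e$ on $\boundary_- M$ (necessarily on two components of $\boundary_- M$ lying on opposite sides of $H$) is resolved by producing a vertical cut disc, but the production is not justified: you take a meridian of $e$ and cap it off ``in the product region by an annulus running to $\boundary_+ M$ and meeting $H$ in a single curve,'' yet the product structure on $M - \inter{\eta}(\boundary_- M \cup T)$ gives no control whatsoever on how that annulus meets $H$, and you also do not verify that the resulting boundary curve is essential in $\boundary_+ M$. Both points are exactly the content of the definition of a vertical cut disc, so this is the key missing step, not a routine verification; moreover it quietly imports the Case~1 hypothesis (``$M$ has no vertical cut disc''), which is not part of the lemma's statement and is not used in the paper's proof. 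The paper instead handles $\mc{E}$ by attaching $e_\epsilon$ to a bridge arc of $\epsilon$ when one exists and declaring $\epsilon$ a monotonic interior edge otherwise, and delegates the bridge-position isotopies for $e_F$ and $e_v$ to the ``Proof of Theorem 1.1 given Theorem 3.1'' of \cite{HS1}; to make your version work you would either need to actually construct the vertical cut disc in that configuration or rule the configuration out by another argument.
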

\begin{proof}
For each component $F$ of $\mc{F}$, let $e_F$ be a vertical arc joining $F$ to $\boundary_+ M$. For each edge $\epsilon$ of $\mc{E}$ let $e_\epsilon$ be an edge joining $\epsilon$ to $\boundary_+ M$ which is vertical in the product structure on $M - \inter{\eta}(T)$. For each vertex $v$ of $\mc{V}$, let $e_v$ be an edge joining $v$ to $\boundary_+ M$ which is vertical in the product structure on $M - \inter{\eta}(T)$. Let $T'$ be the union of all the $e_F, e_\epsilon$, and $e_v$ and let $\wihat{T} = T \cup T'$. We need to show that $T'$ can be chosen so that $T'$ can be isotoped in the complement of $T$ so that $(M,\wihat{T})$ is in bridge position with respect to $H$. The isotopy is not allowed to move $T$.

The ``Proof of Theorem 1.1 given Theorem 3.1'' from \cite{HS1}, shows that this is possible for the edges $e_F$ and $e_v$ (with $F \subset \mc{F}$ and $v \subset \mc{V}$). Suppose, therefore, that $\epsilon$ is an edge in $\mc{E}$. If $\epsilon \cap C_1$ or $\epsilon \cap C_2$ contains a bridge arc, then attach $e_\epsilon$ to a bridge arc. The arc $e_\epsilon$ can then be isotoped to be in bridge position. If $\epsilon$ does not contain a bridge arc, then it must be a monotonic interior edge. All the edges of $\mc{E}$ belong to $T_0$, so the edge $\epsilon$ is an edge of $T_0$ with distinct endpoints and with neither endpoint on $\boundary M$. 

If $\wihat{T}$ is defined, it is obvious that the hypotheses of Theorem \ref{Main Theorem} and assumptions (A) - (C) hold. Assumption (NPH) also holds, since no edge of $\wihat{T} - T$ has an endpoint on $\boundary_- M$.
\end{proof}

We also need to know that adding edges does not create any new vertical cut discs.

\begin{lemma}
Let $\wihat{T}$ be the graph formed from $T$ by Lemma \ref{Lem: Adding Edges}. If $(M,\wihat{T})$ has a vertical cut disc, then so does $(M,T)$.
\end{lemma}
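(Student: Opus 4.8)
The plan is short: I would show that a vertical cut disc for $(M,\wihat{T})$ must already meet the original graph $T$, and is then a vertical cut disc for $(M,T)$ essentially by definition. So the only real content is to rule out the disc sitting off of $T$.

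Let $\wihat{D}$ be a vertical cut disc for $(M,\wihat{T})$. After a transversality isotopy it meets $\wihat{T}$ in its single point $p$, it meets $H$ in a single simple closed curve, and $\boundary\wihat{D}$ is essential in $\boundary_+ M$. I claim that $p\notin\wihat{T}-T$. Suppose otherwise, so that $p$ lies on one of the edges $e_F$, $e_v$, $e_\epsilon$ adjoined in Lemma~\ref{Lem: Adding Edges}. Since $\wihat{D}$ meets $\wihat{T}$ only at $p$ and $p\notin T$, the disc $\wihat{D}$ is disjoint from $T$; taking $\eta(T)$ thin enough we may assume $\wihat{D}\subset M-\inter{\eta}(T)$, and then $\boundary\wihat{D}\subset\boundary_+ M-\inter{\eta}(T)$. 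A simple closed curve in $\boundary_+ M-\inter{\eta}(T)$ that is inessential there bounds a disc contained in $\boundary_+ M-\inter{\eta}(T)\subset\boundary_+ M$, so $\boundary\wihat{D}$, being essential in $\boundary_+ M$, is still essential in $\boundary_+ M-\inter{\eta}(T)$. Thus $\wihat{D}$ would be a compressing disc for $\boundary_+ M-\inter{\eta}(T)$ in $M-\inter{\eta}(T)$. But the hypothesis of Theorem~\ref{Main Theorem} says exactly that $\boundary_+ M-\inter{\eta}(T)$ is parallel in $M-\inter{\eta}(T)$ to the rest of $\boundary(M-\inter{\eta}(T))$; hence $M-\inter{\eta}(T)$ is a product and has incompressible boundary, a contradiction.

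Therefore $p\in T$. Then $\wihat{D}\cap T=\{p\}$ is a single transverse intersection point, $\wihat{D}\cap H$ is a single simple closed curve, and $\boundary\wihat{D}\subset\boundary_+ M$ is essential, so $\wihat{D}$ is itself a vertical cut disc for $(M,T)$, which completes the proof. I do not expect a real obstacle here; the one point to check with a little care is that being essential in $\boundary_+ M$ is inherited by $\boundary_+ M-\inter{\eta}(T)$, so that the incompressibility of the boundary of the product $M-\inter{\eta}(T)$ can be brought to bear. (One could alternatively try to push $\wihat{D}$ down the added edge onto $T$, but then one has to track $|\wihat{D}\cap H|$ through the isotopy, which is exactly the complication the product argument avoids.)
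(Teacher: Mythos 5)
Your proof is correct and is essentially the paper's argument: the paper likewise splits into the two cases according to whether the cut disc meets $T$ or only an added edge, and in the latter case observes that the disc would be a compressing disc for $\boundary_+ M$ disjoint from $T$, which is impossible because $T$ contains a spine $T_0$ of $M$ --- the same contradiction you extract from the product structure of $M - \inter{\eta}(T)$. One small wording caution: the full boundary of $M - \inter{\eta}(T)$ is in general compressible (it is a handlebody whenever $T$ meets $\boundary_+ M$); what your argument actually uses, and correctly verifies via the essentiality of $\boundary \wihat{D}$ in $\boundary_+ M - \inter{\eta}(T)$, is that this horizontal subsurface of the product is incompressible in $M - \inter{\eta}(T)$.
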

\begin{proof}
Suppose that $D$ is a vertical cut disc for $(M,\wihat{T})$. Let $\mb{I}$ be the edges of $\wihat{T}$ which are not subsets of $T$. If $D$ intersects an edge of $T$, then $D$ is a vertical cut disc for $(M,T)$. If $D$ intersects an edge of $\mb{I}$, then $D$ is a compressing disc for $\boundary_+ M$, intersecting $H$ in a single simple closed curve, which is disjoint from $T$. However, $T$ contains a spine $T_0$ for $M$, and so $T$ is not disjoint from any boundary compressing disc for $\boundary_+ M$. 
\end{proof}

Let $(M',T')$ be obtained from $(M,T)$ by puncturing all interior monotonic edges. It is obvious that $M'$ does not have any vertical cut discs, since $M$ had no vertical cut discs. It is equally easy to see that since $(M,T)$ satisfies (NPH), $(M',T')$ does as well. Since $T'$ has no interior monotonic edges, the graph $\wihat{T}$ from Lemma \ref{Lem: Adding Edges} is defined. Assume that $(M,\wihat{T})$ is in bridge position with respect to $H$. Assume also that no edge of $\wihat{T} - T'$ is perturbed and that $(M,\wihat{T})$ satisfies the hypotheses (A)-(C) in Section \ref{sec:R}. Applying Proposition \ref{Prop: Strengthened Perturbed} with $\mb{I} = \wihat{T} - T'$, $\wihat{T}$ in place of $T$, and $T'$ in place of $\mb{T}$, we conclude that Theorem \ref{Main Theorem} holds for $H$ as a splitting of $(M',T')$. Hence, by Lemma \ref{Lem: puncturing mie}, Theorem \ref{Main Theorem} holds for $H$ as a splitting of $(M,T)$. 

This concludes Case 1. Next we perform the inductive step.

\textbf{Case 2:} There exists a vertical cut disc for $\boundary_+ M$.

Let $\Delta$ be a vertical cut disc for $\boundary_+ M$. Let $M'$ be obtained from $M$ by boundary-reducing using $\Delta$. Let $T' = T \cap M'$ and notice that $|T' \cap \boundary_+ M'| = |T \cap \boundary_+ M| + 2$ . Let $H'$ be obtained from $H$ by compressing along $\Delta \cap C_1$ and a slight isotopy to make it properly embedded. 

\begin{lemma}
There exists a vertical cut disc $\Delta$ such that $H'$ is a Heegaard surface for $(M',T')$ and assumption (NPH) is satisfied. Furthermore the disc component of $\Delta - H$ intersects $T$.
\end{lemma}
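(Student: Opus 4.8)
The plan is to start from an arbitrary vertical cut disc $\Delta_0$ for $\boundary_+ M$ and improve it by a sequence of exchange/isotopy moves so that it meets $H$ in exactly one simple closed curve, meets $T$ transversally in exactly one point lying on the arc component of $\Delta_0 - H$, and so that after compressing $H$ along $\Delta_0 \cap C_1$ the resulting $H'$ splits $M'$ into two compressionbodies with no pod handle adjacent to $\boundary_- M'$. First I would recall that by definition a vertical cut disc meets $H$ in a single closed curve $c$; since $c$ is essential on $H$ (otherwise an innermost disc argument on a trivial $c$ reduces $|\Delta_0 \cap H|$, contradicting transversality with the single intersection point or eliminating it entirely) the curve $c$ bounds a disc in exactly one of $C_1, C_2$. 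The disc $\Delta_0$ is then the union of a disc piece $P$ (the component of $\Delta_0 - H$ that is a disc) and an annulus piece $N$ (on the other side of $H$); one of the two meets $T$ in its single point, and the other is disjoint from $T$.

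Next I would handle the two cases according to which side the disc piece $P$ lies on. If $P \subset C_1$ and $P \cap T = \nil$, then $c = \boundary P$ bounds a disc in $C_1$ disjoint from $T$, so compressing $H$ along $P$ produces a surface with a sphere component disjoint from $T$ (or shows $H$ itself was compressible in a way that forces a stabilization via Frohman's trick); in either case one of the conclusions of the Main Theorem already holds, or, more to the point for this lemma, one can instead slide $\Delta_0$ across $H$ using the annulus $N$ to push the single intersection point with $T$ onto the disc side, i.e. replace $\Delta_0$ by a new vertical cut disc whose disc component does meet $T$. If on the other hand $P \subset C_2$ (so $P$ is a disc in $C_2$ meeting $\boundary_+ M \subset \boundary_- C_2$), then the annulus $N \subset C_1$ is a vertical annulus in $C_1$ with one boundary on $H$ and one on $\boundary_- C_1$; I would argue this is the situation we want, and that the compression $H' = (H \text{ compressed along } N\text{-core disc})$ — more precisely along the compressing disc for $H$ obtained from $N$ together with a subdisc of a complete collection — yields a Heegaard surface for $(M', T')$ with $|T' \cap \boundary_+ M'| = |T \cap \boundary_+ M| + 2$, so that $-\chi(\boundary_+ M') < -\chi(\boundary_+ M)$ for the induction.

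The step requiring the most care — and the main obstacle — is arranging assumption (NPH) for the pair $(M', T')$ after the compression: boundary-reducing $M$ along $\Delta$ and correspondingly compressing $H$ can create a pod handle of $T'_1$ or $T'_2$ adjacent to the new boundary component, or adjacent to an old component of $\boundary_- M$ whose complementary region has changed. To deal with this I would choose $\Delta$, among all vertical cut discs meeting $T$ in their disc component, to intersect a fixed complete collection of bridge discs $\Delta$ for $T - H$ and a fixed spine $Q$ of $C_1$ minimally, and then run the usual innermost-disc / outermost-arc exchanges (as in the proofs of Lemmas \ref{Lem: Resolving Pod Handles 2} and \ref{Lem: puncturing mie}, and as in \cite[Proposition 3.3]{HS1}) to push $\Delta$ off the offending pod structure; if a pod handle adjacent to $\boundary_- M'$ genuinely persists, then Lemmas \ref{Lem: Resolving Pod Handles 1} and \ref{Lem: Resolving Pod Handles 2} allow us to resolve it and reduce to a pair already covered by (NPH) without loss. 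I expect the bookkeeping to mirror Hayashi–Shimokawa's treatment closely enough that the remaining verifications (that $H'$ is genuinely a Heegaard surface, i.e. the two sides are compressionbodies, and that the disc component of $\Delta - H$ can always be taken to meet $T$) follow from standard compressionbody arguments together with the fact that $T$ contains the spine $T_0$ of $M$ so no boundary-compressing disc for $\boundary_+ M$ is disjoint from $T$.
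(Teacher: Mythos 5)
Your case division rests on a misreading of the geometry, and the step it is meant to replace is exactly the content of the lemma. Since $\boundary \Delta \subset \boundary_+ M$ and $\boundary_+ M \subset \boundary_- C_2$, the component of $\Delta - H$ containing $\boundary \Delta$ is always the annulus and always lies in $C_2$; the disc component automatically lies in $C_1$. So your case ``$P \subset C_2$'' never occurs, and the real dichotomy is whether the single point of $\Delta \cap T$ lies in the disc $\Delta_1 = \Delta \cap C_1$ or in the annulus $\Delta_2 = \Delta \cap C_2$. (Your side remark that if the disc piece misses $T$ then compressing along it yields a sphere component or a stabilization is a non sequitur; nothing of the sort follows.) In the case $\Delta_2 \cap T \neq \nil$, your proposed remedy --- ``slide $\Delta_0$ across $H$ using the annulus'' --- is precisely what must be proved, and it cannot be done uniformly: the argument has to analyze the edge $\tau_e$ of $T_2$ met by $\Delta_2$. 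If $\tau_e$ lies in a bridge edge or pod and is not a handle, one chooses a bridge disc $E$ containing $\tau_e$ with $|E \cap \Delta|$ minimal, so that $E \cap \Delta_2$ is a single arc, and isotopes $\Delta$ across a subdisc of $E$ to push the intersection point into $C_1$. If $\tau_e$ is a vertical edge (or a handle adjacent to $\boundary_+ M$), no such push exists: the intersection could only be removed through $\boundary_+ M$, making $\Delta$ a compressing disc for $\boundary_+ M$ disjoint from the spine $T_0 \subset T$, which is impossible. The remaining possibility, a pod handle adjacent to $\boundary_- M$, is excluded by (NPH). None of this appears in your sketch, so the ``furthermore'' clause is not established.

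The other two assertions are also left essentially unproved. That $H'$ is a Heegaard surface for $(M',T')$ amounts to showing $T'_1$ is trivially embedded in $C'_1$ (triviality of $T'_2$ is easy once $\Delta_2 \cap T = \nil$): this requires choosing a complete collection of bridge and pod discs of $C_1$ minimizing intersection with $\Delta_1$, checking that a disc meeting $\Delta_1$ is cut into two bridge discs, and using vertical discs to see that the component of $T_1$ meeting $\Delta_1$ is cut into a vertical arc together with a bridge edge or bridge pod; ``standard compressionbody arguments'' does not supply this. Your treatment of (NPH) is likewise off target: there is no ``offending pod structure'' to push $\Delta$ off of, and the fallback of resolving a surviving pod handle of $(M',T')$ via Lemmas \ref{Lem: Resolving Pod Handles 1} and \ref{Lem: Resolving Pod Handles 2} would replace $T'$ by a different graph, while the rest of Case 2 (the perturbation and removable-path arguments involving the vertical arcs $\psi_1,\psi_2$) is carried out for $T' = T \cap M'$ itself, so this is not ``without loss''. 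The correct verification is short and direct: the new boundary created by the cut lies in $\boundary_+ M'$, every interior vertex of $T'$ is already a vertex of $T$, and each component of an edge cut by $\Delta$ acquires an endpoint on $H'$; hence no pod handle adjacent to $\boundary_- M' = \boundary_- M$ can be created.
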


\begin{proof}
Let $\Delta_i = \Delta \cap C_i$ for $i = 1,2$ so that $\Delta_2$ is an annulus and $\Delta_1$ is a disc. Let $C'_i = C_i \cap M'$ for $i = 1,2$. It is well-known that $C'_i$ is a (possibly disconnected) compressionbody for $i = 1,2$. 

\textbf{Claim: } We may assume that $\Delta_1 \cap T \neq \nil$.

Suppose that $\Delta_2 \cap T \neq \nil$. Let $\tau_e$ be the edge of $T_2$ which intersects $\Delta_2$. Let $\tau$ be the component of $T_2$ containing $\tau_e$.

\textbf{Case (a):} $\tau$ is a bridge edge, bridge pod, or vertical pod and $\tau_e$ is not a pod handle.

In this case, let $E$ be a bridge disc containing $\tau_e$ in its boundary. Choose $E$ so that out of all such bridge discs, $|E \cap \Delta|$ is minimal. An easy innermost disc/outermost arc argument shows that $\Delta_2 \cap E$ consists of a single arc $\gamma$ joining the puncture $\tau_e \cap \Delta_2$ to $\Delta \cap H$. Let $E'$ be a subdisc of $E$ cut off by $\gamma$. If $\tau$ is a pod, choose $E'$ so that it does not contain the vertex of the pod handle in its boundary. An isotopy of $\Delta$ across $E'$ carries $\Delta$ to a disc $\Delta'$ intersecting $H$ in a single simple closed curve. Furthermore, $\Delta'$ intersects $T$ exactly once, in a point contained in $C_1$.

\textbf{Case (b):} Either $\tau$ is a vertical edge or $\tau$ is a vertical pod, $\tau_e$ is the handle, and $\tau_e$ is adjacent to $\boundary_+ M$.

In this case, $\Delta$ is a compressing disc for $\boundary_+ M$ which is disjoint from the spine $T_0$ of $M$, an impossibility.

Thus, if $\Delta_2 \cap T \neq \nil$ then $\Delta_2$ intersects a pod handle of $T_2$ adjacent to $\boundary_- M$. This contradicts assumption (NPH). \qed(Claim)

By the claim, $\Delta_1 \cap T \neq \nil$. Let $C'_i$ be the compressionbodies into which $H'$ separates $M'$. Since $T'_2$ is disjoint from $\Delta_2$ it is not hard to see that $T'_2$ is trivially embedded in $C'_2$. We show, therefore, that $T'_1$ is trivially embedded in $C'_1$. 

Let $\mc{D}$ be a complete collection of bridge discs and pod discs in $C_1$, chosen so as to minimize $|\mc{D} \cap \Delta_1|$. It is not hard to see that if $D$ is a bridge disc in $\mc{D}$ then either $D \cap \Delta_1 = \nil$, or $D$ runs along the edge $\tau_e$ of $T_1$ intersecting $\Delta_1$. In this latter case, $D \cap \Delta_1$ consists of a single arc and $D - \Delta_1$ consists of two discs, each a bridge disc for an arc in $T'_1$. Thus, each arc in $T'_1$ with both endpoints on $H'$ has a bridge disc. A similar argument using vertical discs in $C_1$ in place of $\mc{D}$ shows that if $\tau$ is the component in $T_1$ intersecting $\Delta_1$ then $\Delta_1$ cuts $\tau$ into two components, one of which is a vertical arc and one of which is a bridge edge or bridge pod. Any vertical edge or vertical pod disjoint from $\Delta_1$ remains a vertical edge or vertical pod in $C'_1$. Hence, $T'_1$ is trivially embedded in $C'_1$.

Finally, we need to show that $(M',T')$ satisfies (NPH). Consider an edge $e$ of $T_1$ or $T_2$ adjacent to $\boundary_- M$. Such an edge is not a pod handle of $T$ since $(M,T)$ satisfies (NPH). Every non-boundary vertex of $T'$ is a vertex of $T$, and so if a component of $e_-$ of $T \cap M'$ is a pod handle, $e_-$ contains a vertex of $T$, implying that $e_-$ is a pod handle of $T'$, a contradiction. Suppose, therefore, that $e_-$ is an edge of $T_1$ or $T_2$ with an endpoint on $H$. If $e$ is disjoint from $\Delta$, then $e$ does not become a pod handle in $T'$. If $e$ intersects $\Delta$, each component of $e - H'$ has an endpoint on $H'$ and so no new pod handles are introduced.
\end{proof}

Notice that if $M'_0$ is a component of $M'$, then $(M'_0,T'_0)$ satisfies the hypotheses of Theorem \ref{Main Theorem} and $-\chi(\boundary_+ M'_0) < -\chi(\boundary_+ M)$. By the inductive hypothesis, since $-\chi(\boundary_+ M'_0) < -\chi(\boundary_+ M)$, we may assume that $(M'_0,T')$ satisfies one of Conclusions (1) - (6) of Theorem \ref{Main Theorem}. Without loss of much generality, from now on we assume that $M'$ is connected and consider $H'$ to be a splitting of $(M',T')$. $H$ can be reconstructed from $H'$ by attaching a tube to the ends of two vertical arcs $\psi_1$ and $\psi_2$ in $C'_2$.

\begin{lemma}
If $H'$ is stabilized or boundary-stabilized, then $H$ is stabilized or boundary-stabilized.
\end{lemma}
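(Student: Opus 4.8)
The plan is to recognise $H$ as an amalgamation of $H'$ with a trivial piece supported in a small ball near the vertical cut disc $\Delta$, hence far from any compressing discs or product region that witness a (boundary) stabilization of $H'$. Concretely, recall that $\Delta_2 = \Delta \cap C_2$ is an annulus disjoint from $T$, that $\psi_1$ and $\psi_2$ are its core arcs --- vertical arcs of $C_2'$ disjoint from $T'$ --- and that $H$ is recovered from $H'$ by tubing along an arc $a$ obtained by concatenating $\psi_1$, an arc in $\boundary_+ M$, and $\psi_2$. Thus $C_1 = C_1' \cup \eta(a)$ and $C_2 = \cls(C_2' - \eta(a))$, where $\eta(a)$ is a ball disjoint from $T'$ whose two feet on $H'$ are disjoint from $T'$, and $T$ agrees with $T'$ outside a neighbourhood of $\Delta_1$ which is disjoint from $\eta(a)$.

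For the stabilized case, let $\{E_1',E_2'\}$ be a stabilizing pair for $H'$, with $E_i' \subset C_i'$. First I would isotope $\partial E_1'$ and $\partial E_2'$ slightly on $H'$, away from their common point, to miss the two feet of $\eta(a)$; then $\partial E_1', \partial E_2' \subset H' \cap H$ still meet transversally in a single point on $H$. Since $C_1' \subset C_1$, the disc $E_1'$ is already properly embedded in $C_1$. Since $\eta(a)$ is a ball disjoint from $T'$ whose feet miss $\partial E_2'$, an innermost--disc/outermost--arc argument (as in the proof of Lemma \ref{Lem: Resolving Pod Handles 2}) isotopes $E_2'$, fixing its boundary and staying disjoint from $T'$, into $C_2$. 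As $\partial E_1'$ and $\partial E_2'$ meet once on $H$, each is non-separating, hence essential, on $H$; so $\{E_1',E_2'\}$ is a stabilizing pair for $H$, and it stays disjoint from $T$ by the last remark of the previous paragraph. Hence $H$ is stabilized.

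For the boundary-stabilized case, suppose $H'$ is boundary-stabilized along a component $F$ of $\boundary_- M' = \boundary_- M$. Unwinding the definition of boundary stabilization, there is a product region $W \approx F \times I$ in $M'$ with $F \times \{0\} \subset \boundary_- M'$, such that $H' \cap W$ is a minimal-genus type~II Heegaard surface for $W$, $T' \cap W$ is a union of vertical arcs each met twice by $H' \cap W$, and $H' - \interior(W)$ is a Heegaard surface for $M' - \interior(W) \approx M'$. Because $\partial \Delta \subset \boundary_+ M'$ is disjoint from $F = F \times \{0\}$, I would minimise $|\Delta \cap F^*|$, where $F^* = \boundary W - F$, and so arrange $\Delta$ --- hence $\eta(a)$ --- to be disjoint from $W$; then the regluing $\Delta' \sim \Delta''$ producing $M$ and the tubing producing $H$ both take place in $M' - \interior(W)$, so $W$ persists in $M$ with $H \cap W$ and $T \cap W$ unchanged, while $H - \interior(W)$ is obtained from $H' - \interior(W)$ by the tubing and is a Heegaard surface for $M - \interior(W) \approx M$. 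Thus $H$ is a boundary stabilization of $H - \interior(W)$, i.e.\ boundary-stabilized.

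The hard part will be the boundary-stabilized case, specifically making $\Delta$ disjoint from the product region $W$: one must deal with circles of $\Delta \cap F^*$ that are essential on $F^*$. These are removed using that $F^*$ is parallel to the $\boundary_-$-incompressible surface $F$ in the compressionbody $M'$; when $F$ is a sphere one must instead absorb or rule out reducing spheres, exactly as in \cite{HS1,HS2}. The stabilized case, by contrast, is routine, amounting to pushing one compressing disc off the reconstruction ball $\eta(a)$.
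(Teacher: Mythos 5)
Your reconstruction of $H$ from $H'$ is misstated, and this matters. Since the paper arranges (via the Claim in the preceding lemma) that the single point of $\Delta \cap T$ lies in $\Delta_1 = \Delta \cap C_1$, cutting along $\Delta$ turns the edge of $T_1$ through that point into two edges of $T'$ meeting $\boundary_+ M'$, and the arcs $\psi_1,\psi_2$ are the two \emph{vertical arcs of $T' \cap C_2'$} so created (the paper says exactly this in the next lemma: ``the arcs in $T \cap C_2'$ which are glued together in $M$''). The reconstruction tube therefore encircles the sub-arc $\psi_1 \cup \psi_2$ of $T$, and its feet are small discs of $H'$ centered at the two points of $T' \cap H'$ lying on the parallel copies of $\Delta_1$ --- not a tube along an arc disjoint from $T'$ with feet missing $T'$, as you assert. (Indeed $|H \cap T| = |H' \cap T'| - 2$; a tube along an arc disjoint from $T$ would not change the intersection number, so your description is not an alternative picture of the same surface.) For the stabilized case this error is harmless and your argument survives, in fact more easily than you expect: $E_1' \subset C_1' \subset C_1$, and $E_2'$, being disjoint from $T'$, is automatically disjoint from $\psi_1 \cup \psi_2$, hence can be pushed off the reconstruction tube; the boundaries still meet once in $H \cap H'$, so $H$ is stabilized.

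The genuine gap is in the boundary-stabilized case, which rests on an incorrect unwinding of the definition. A boundary-stabilized $H'$ is the \emph{amalgamation} of a splitting $K'$ of $(M',T')$ with the minimal genus type II splitting of a collar $F \times [-1,0]$; it does \emph{not} contain a product region $W \cong F \times I$ in which $H' \cap W$ is a closed type II Heegaard surface while $H' - \interior(W)$ is a Heegaard surface of the complement --- if it did, $H'$ would be disconnected, contradicting that it is a Heegaard surface. (The decomposition you describe is the untelescoped, generalized splitting \emph{before} amalgamation; amalgamation destroys it.) Consequently the ``hard part'' you identify --- isotoping $\Delta$ off $W$ --- is a problem internal to a setup that does not exist, and nothing in your argument produces the required amalgamation structure for $H$. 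What is actually needed (and what the paper treats as obvious from the construction) is that tubing along $\psi_1 \cup \psi_2$, an operation supported along vertical arcs of $T$ adjacent to $\boundary_+ M$, commutes with amalgamating along $F \subset \boundary_- C_1'$: if $H'$ is isotopic to the amalgamation of $K'$ with the collar splitting, then $H$ is isotopic to the amalgamation of the splitting of $(M,T)$ obtained by performing the corresponding tubing on $K'$ (the reverse of the vertical cut disc construction, which one must check again yields trivially embedded graphs on both sides) with that same collar splitting; hence $H$ is boundary stabilized. Your proposal neither states nor proves this commutation, so the boundary-stabilized half is not established.
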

\begin{proof}
This is obvious from the construction.
\end{proof}

\begin{lemma} Either $T$ has a removable path disjoint from $\boundary_+ M$ or, if $H$ is not perturbed, it is possible to rechoose $\Delta$ so that $H'$ is not perturbed.
\end{lemma}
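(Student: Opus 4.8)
The plan is to suppose $H'$ is perturbed as a splitting of $(M',T')$ while $H$ is not, and to trace the perturbing pair back through the tubing that recovers $H$ from $H'$. Fix a perturbing pair $\{E_1,E_2\}$ for $H'$ with $E_j \subset C'_j$ and with the arcs $\alpha_j = \boundary E_j \cap H'$ sharing exactly one endpoint. Recall that $\Delta_1 = \Delta \cap C_1$ is a disc with $\boundary \Delta_1 \subset H$ meeting $T$ in the single point $x = \Delta \cap T$, that $H'$ is obtained by compressing $H$ along $\Delta_1$, and that conversely $H$ is recovered from $H'$ by a tube in $C'_2$ along $\psi_1 \cup \psi_2$. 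In particular $\Delta_1$ is a compressing disc for $H$ with $|\Delta_1 \cap T| = 1$; it will play the role of the disc ``$E$'' in (RP 3) or (RP 4). Let $\tau_e$ be the edge of $T$ through $x$; in $T'$ the edge $\tau_e$ is cut into pieces, each running monotonically to one of the two copies of $\Delta$ in $\boundary_+ M'$.

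Among all admissible vertical cut discs (those with $\Delta_1 \cap T \neq \nil$ inducing a splitting of $(M',T')$ satisfying (NPH), as produced by the preceding lemma) I would fix $\Delta$ minimizing a complexity such as $(|\Delta \cap H|,\, |\Delta \cap (E_1 \cup E_2)|)$ lexicographically, having first isotoped $\{E_1,E_2\}$ to minimize $|(E_1 \cup E_2) \cap \Delta|$. Standard innermost-disc/outermost-arc cleanup reduces $(E_1 \cup E_2) \cap \Delta$ to essential arcs (arcs of $E_1 \cap \Delta_2$ and $E_2 \cap \Delta_1$, living on opposite sides of $H$, are removed, and $\boundary\Delta$ bounds a disc in $\boundary_+ M$). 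If $E_1 \cup E_2$ is then disjoint from $\Delta$, the discs $E_1, E_2$ are bridge discs for $T_1, T_2$ whose boundary arcs meet $H$ in a single shared endpoint, so $H$ is perturbed, contradicting our hypothesis. Hence at least one of $E_1, E_2$ meets $\Delta$ essentially; fix an outermost arc $\beta$ of $(E_1\cup E_2) \cap \Delta$ in $\Delta$ and let $\Delta'$ be the subdisc of $\Delta$ it cuts off.

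The heart of the argument is a dichotomy on $\Delta'$. Either (i) $\Delta'$, together with the sub-disc of $E_1$ or $E_2$ that $\beta$ cuts off, permits an isotopy of $\Delta$ (or a replacement of $\Delta$ by another admissible vertical cut disc) strictly decreasing the chosen complexity, which contradicts minimality so does not occur for the chosen $\Delta$; or (ii) the configuration is genuinely linked and assembles the data of a removable path. In case (ii), because the only way the perturbation can irreducibly wrap the tube is for $\tau_e$ to close up through the arcs $\alpha_1, \alpha_2$, the edge $\tau_e$ must lie on a cycle $\zeta$ of $T$ meeting $H$ exactly twice (once in the bridge arc near $\Delta$, once at the shared endpoint of $\alpha_1,\alpha_2$). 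One checks that $\{E_1, E_2\}$, possibly after amalgamating with a bridge or pod disc for the remaining arc of $\zeta$ in one of the $C_j$, is a cancelling pair for $\zeta$ extendable to a complete collection $\mc{D}$ of bridge discs for $T - H$, and that $\Delta_1$ is a compressing disc for $H$ with $|\Delta_1 \cap T| = 1$ meeting the relevant cancelling disc exactly once (via $\beta$) and otherwise disjoint from $\mc{D}$. These are precisely (RP 1)--(RP 3), and since a cycle in $T$ is disjoint from $\boundary M$, the cycle $\zeta$ is a removable path disjoint from $\boundary_+ M$. The symmetric situation, in which the perturbing arcs are anchored on $\boundary_- M$, instead yields a boundary-to-boundary removable path together with the disc $E = \Delta_1$ of (RP 4).

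The main obstacle I expect is verifying this dichotomy and, within case (ii), matching the intersection combinatorics exactly to (RP 2)--(RP 3): that when no complexity-reducing isotopy of $\Delta$ exists the pattern forces $\tau_e$ onto a cycle of the right type, that the edge of $\zeta$ hits $H$ in exactly two points (no extra bridge arcs), and that $\{E_1, E_2, \Delta_1\}$ embeds into a complete collection of bridge discs with the single prescribed intersection and no others. A secondary issue is organizing the complexity in case (i) so that every re-choice of $\Delta$ stays within the class of admissible vertical cut discs satisfying (NPH) for $M'$; here I would lean on the preceding lemma, which guarantees that an admissible $\Delta$ with $\Delta_1 \cap T \neq \nil$ always exists, to keep each re-choice legitimate.
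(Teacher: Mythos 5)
There is a genuine gap here: the ``dichotomy'' at the heart of your plan --- either the outermost-arc configuration admits a complexity-reducing rechoice of $\Delta$, or it ``assembles the data of a removable path'' --- is essentially a restatement of the lemma itself, and you give no argument for either horn; you flag it yourself as the main obstacle. Nothing in the proposal shows that when the complexity-minimizing admissible $\Delta$ still leaves $H'$ perturbed, a removable path (or a perturbation of $H$) must actually exist. Moreover, the heuristic you offer for case (ii) is not correct: it is not true that a perturbation of $H'$ that cannot be removed forces the edge $\tau_e$ onto a cycle of $T$. In the paper's argument the generic outcome when no rechoice is available is a removable path running from $\boundary_- M$ to $\boundary_- M$ (this is how disjointness from $\boundary_+ M$ is obtained), arising when the components of $T \cap C'_1$ adjacent to $\psi_1,\psi_2$ and the component $\sigma$ of $T_2$ at the far end of $\boundary D_2$ are vertical edges; the removable cycle arises from a different configuration, namely a cancelling pair built by amalgamating a bridge disc with the perturbing disc across $\Delta$, with $\Delta$ itself serving as the disc $E$ of (RP3). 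Your opening reduction is also slightly off: the obstruction to transferring a perturbing pair $\{E_1,E_2\}$ from $H'$ back to $H$ is not intersection with $\Delta$ but whether $\boundary E_1 \cap H'$ has an endpoint at $\psi_1 \cap H'$ or $\psi_2 \cap H'$ (the paper's condition is that the disc on the $C'_1$ side misses $\psi_1 \cup \psi_2$); a pair disjoint from $\Delta$ can still fail to persist after re-tubing.

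What is missing is precisely the case analysis the paper carries out with the perturbing pair and $\Delta$ fixed: split according to whether the components $\tau_1,\tau_2$ of $T \cap C'_1$ adjacent to $\psi_1,\psi_2$ are bridge edges/pods or vertical edges (bridge position forces at least one to contain no vertex, and (NPH) rules out pod handles at $\boundary_- M$). When $\tau_2$ is a bridge edge or pod, a disc swap as in \cite[Lemma 3.1]{STo} produces a bridge or pod disc disjoint from $D_2 - T$; gluing it to the perturbing disc across $\Delta$ makes $H$ cancellable, and one concludes that either $H$ is perturbed, or $T$ has a removable cycle, or --- and this is the actual rechoice mechanism, absent from your sketch --- one isotopes $\Delta$ along the arc $\boundary D \cap H$ so that $\boundary\Delta$ passes one intersection point with $\boundary D_2 \cap H$, destroying the perturbing pair for the new $H'$, with a minimality count of intersections showing that no new perturbing pair appears except in a configuration that again yields a removable cycle. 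When instead $\tau_2$ and $\sigma$ are vertical edges, the relevant arcs assemble into a removable path with both endpoints on $\boundary_- M$, and the (RP3)/(RP4) conditions (the complete collection of bridge discs, the single intersection with the compressing disc) are verified by further disc swaps. A generic lexicographic minimization over admissible vertical cut discs does not substitute for this combinatorics, so as it stands the proposal is a framework rather than a proof.
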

\begin{proof}
Let $\{D_1,D_2\}$ be perturbing discs with $D_1 \subset C'_1$ and $D_2 \subset C'_2$. Let $\psi_1$ and $\psi_2$ be the arcs in $T \cap C'_2$ which are glued together in $M$. Let $\tau_j$ be the component of $T \cap C'_1$ adjacent to $\psi_j$ for $j = 1,2$.

If $D_1$ is disjoint from $\psi_1 \cup \psi_2$ then $H$ is perturbed. Assume, therefore that $\boundary D_1 \cap H$ has one endpoint at $\psi_1 \cap H$. Notice that at least one of $\tau_1$ or $\tau_2$ must not contain a vertex of $T$ as otherwise, an interior of edge of $T$ would be disjoint from $H$, contradicting bridge position.

\textbf{Claim (a):} Suppose that $\tau_2$ is a bridge edge or pod. Furthermore, suppose that there is a bridge disc $D$ containing the edge of $\tau_2$ adjacent to $\psi_2$ such that $\boundary D \cap H$ has disjoint interior from $\boundary D_2 \cap H$. Then either $H$ is perturbed or $T$ has a removable cycle.

In this case, the union of $D$ with $D_1$ in $M$ is a bridge disc for $T_2$ which intersects $D_2$ only at points of $T \cap H$. Thus, $H$ is cancellable. In fact, if $H$ is not perturbed, then the loop which is the closure of the union of $(\boundary D \cup \boundary D_1 \cup \boundary D_2) - H$ is a removable cycle with the disc $\Delta$ playing the role of $E$ in (RP3).\qed(Claim (a)).

\textbf{Claim (b):} If $\tau_2$ is a bridge edge or pod and if $\tau_1$ is a bridge edge, then the hypotheses of Claim (a) hold.

By the proof of \cite[Lemma 3.1]{STo} (cf. \cite[Lemma 8.6]{TT}) there exists a pod disc for $\tau_2$ which is disjoint from $D_2 - T$. \qed(Claim (b))

\textbf{Claim (c):} If $\tau_2$ is a bridge edge or pod then either $H$ is perturbed, or $T$ has a removable cycle, or there exists a vertical cut disc $\Delta$ such that $H'$ is not perturbed.

Suppose that $H$ is not perturbed and that no cycle of $T$ is removable. Suppose that $E$ is a bridge disc containing $e$ in $C_2$. Isotope $E$ so that it intersects $\Delta$ minimally. Then $E \cap \Delta$ consists of a single arc. Since $H$ is not perturbed and since no cycle of $T$ is removable, the interiors of the arcs $\boundary E \cap H$ and $\boundary D_2 \cap H$ intersect. Assume that $E$ has been chosen so as to minimize the number of intersection points. The process of converting $M$ to $M'$ cuts $E$ into two bridge discs, one of which is $D_1$. Call the other one $D$. Assume that $\{D_1, D_2\}$ are a perturbing pair for $H'$. 

Isotope $\Delta$ along the edge $D \cap H$ so that $\boundary \Delta$ is moved past one intersection point of the interior of $D \cap H$ with the interior of $D_2 \cap H$. See Figure \ref{Fig: ChoppingUnperturbed}. Then after using this new $\Delta'$ to create $M'$, the pair $\{D_1,D_2\}$ is no longer a perturbing pair for $H'$. By the choice of $E$ to minimize $|\boundary E \cap \boundary D_1|$, we are done unless $\{D,D_2\}$ is now a perturbing pair. This can only happen if the closure of $(\boundary D_2 \cup \boundary E) - H$ is a cycle. Since by Claims (a) and (b), $\tau_1$ contains a vertex of $T$, $\tau_2$ does not contain a vertex. Then applying Claims (a) and (b) with $\tau_1$ and $\tau_2$ reversed completes the proof of Claim (c). \qed(Claim (c))

\begin{center}
\begin{figure}[ht]
\scalebox{0.4}{\input{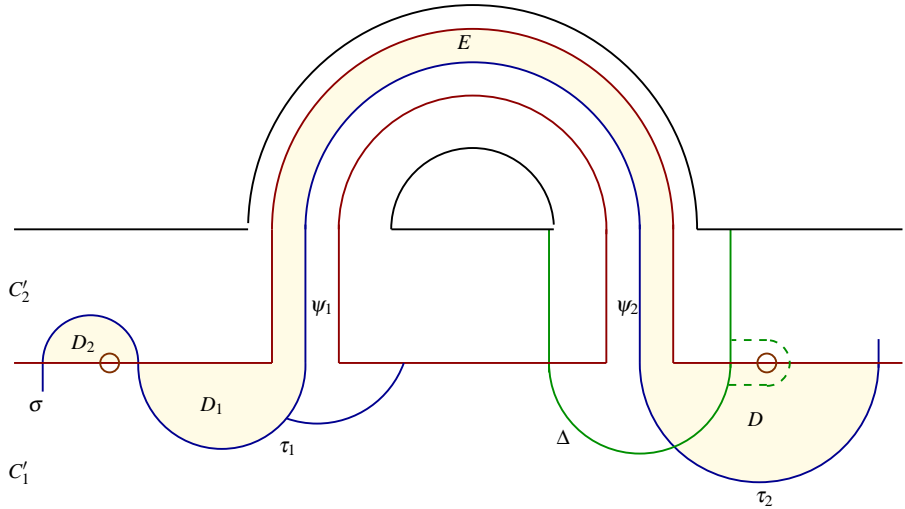_t}}
\caption{Isotope the disc $\Delta$ so that it envelopes one of the intersections between $\boundary D \cap H$ and $\boundary D_2 \cap H$. The intersection point is the hollow circle and the disc $\Delta'$ is formed from $\Delta$ by the dashed green isotopy.}
\label{Fig: ChoppingUnperturbed}
\end{figure}
\end{center}

Thus, we may assume that $\tau_2$ is not a bridge edge or pod. That is, $\tau_2$ is a vertical edge.

\textbf{Claim (d):} Either $H$ is perturbed or the component $\sigma$ of $T_2$ adjacent to the point $\boundary D_2 \cap H \cap T - \boundary D_1$ is a vertical edge.

First, notice that $\tau_1$ cannot contain a vertex of $T$, since then $e$ would be a pod handle for $T_2$ adjacent to $\boundary_- M$, contradicting assumption (NPH). Thus, $\tau_1$ is a bridge edge. Suppose that $\sigma$ is a pod or bridge edge. Then by the proof of Claim (b), there is a bridge disc $E$ for $\sigma$ such that $\{D_2,E\}$ is a perturbing pair for $H'$. Since $E$ is disjoint from $\psi_2$, $\{D_2,E\}$ is also a perturbing pair for $H$. Thus, $\sigma$ must be a vertical edge. \qed(Claim (d))

We conclude by noticing that if $H$ is not perturbed then $\zeta = e \cup \sigma \cup (\boundary D_2 - H)$ is a removable edge. Since $\psi_1$ and $\psi_2$ lie in $C'_2$, $\sigma$ and $\tau_2$ lie in $C'_1$. In particular, the endpoints of $\zeta$ lie on $\boundary_- M$ and so $\zeta$ is disjoint from $\boundary_+ M$, as desired.
\end{proof}

\begin{lemma} If $T'$ contains a removable path, then that path is also removable in $T$.
\end{lemma}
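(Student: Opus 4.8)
The plan is to exploit the fact that the removable path $\zeta$ produced in $T'$ is forced to avoid the region in which $H$ and $T$ are rebuilt from $H'$ and $T'$, so that essentially all of the discs witnessing removability transplant back to $M$ unchanged.

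First I would recall the shape of the reconstruction. Here $M$ is recovered from $M'$ by regluing the two copies $\Delta^{+},\Delta^{-}$ of the vertical cut disc $\Delta$ in $\boundary_+ M'$, and this regluing simultaneously carries $H'$ to $H$ (a tube is reattached along the disc $\Delta\cap C_1$, undoing the compression) and $T'$ to $T$ (the edge $e$ of $T$ met by $\Delta$, which was cut at the point $p=\Delta\cap T$ into the $T'$-edges $e^{+},e^{-}$ carrying the valence-one vertices $p^{\pm}$, is reassembled and picks up a bridge arc in $C_1$ near $p$). All of this is supported in a regular neighborhood $W$ of $\Delta^{+}\cup\Delta^{-}\cup e^{+}\cup e^{-}$: outside $W$ the triples $(M,T,H)$ and $(M',T',H')$ agree, and $\boundary_+ M' - W$ is a subsurface of $\boundary_+ M$.

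Next I would record the key observation. Since the removable path in question comes from Conclusion (4) of Theorem \ref{Main Theorem} applied to $(M',T')$, we may take $\zeta$ disjoint from $\boundary_+ M'$; as $e^{+}$ and $e^{-}$ each carry a valence-one vertex lying on $\boundary_+ M'$, and $\zeta$ is a union of whole edges of $T'$ whose endpoints (by (RP1)) lie on $\boundary_- M'$ or which is a cycle, $\zeta$ contains neither $e^{+}$ nor $e^{-}$ and hence is disjoint from $W$. From this, (RP1) and (RP2) for $\zeta$ in $T$ with respect to $H$ are immediate: $\zeta\subset T$, $\zeta$ is disjoint from $\boundary_+ M$, $\zeta$ is a cycle in $T$ exactly when it is one in $T'$, and $|\zeta\cap H|=|\zeta\cap H'|=2$. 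For (RP3)/(RP4) I would then transplant back to $M$ the cancelling (or perturbing) pair $\{D_1,D_2\}$, respectively the bridge disc $D$, the complete collection $\mc{D}$ of bridge discs for $T'-H'$ appearing in that condition, and the compressing disc $E$; all of $\zeta$'s data lies in a neighborhood of $\zeta$, hence outside $W$, so it transplants verbatim, and one enlarges $\mc{D}$ to a complete collection $\mc{D}^{*}$ of bridge discs for $T-H$ by adjoining the single disc, contained in $W$, for the bridge arc of $e$ created in $C_1$ by the regluing (disjoint from $\zeta$'s data since that lies outside $W$).

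The one point that needs a genuine argument rather than mere transplantation is that $E$ still compresses $H$, not just $H'$. The hard part will be the innermost-disc/outermost-arc cleanup making $E$ disjoint from the disc $\Delta\cap C_1$ along which $H$ is compressed to $H'$: this disc meets $T$ only once, at the point $p$ which lies on $e$, while $E$ meets $T$ only once, at a point \emph{not} on $e$, so no component of $E\cap(\Delta\cap C_1)$ is pinned by $T$ and all of them may be removed by isotopies supported off $T$; afterwards $E$ is unaffected by the surgery and so is a compressing disc for $H$, with its intersection numbers with $T$ and with $D_{j+1}$ (respectively with $D$), and its disjointness from $\mc{D}^{*}$, all preserved (and, in the (RP4) case, $E$ is still on the side of $H$ opposite the bridge arc of $\zeta-H$, which is unchanged). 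Assembling these observations yields (RP3) or (RP4) for $\zeta$ with respect to $H$, so $\zeta$ is a removable path in $T$ disjoint from $\boundary_+ M$, as desired.
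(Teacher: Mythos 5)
Your key observation is exactly the paper's: since the removable path $\zeta$ comes from conclusion (4) for $(M',T')$, it is disjoint from $\boundary_+ M'$, hence cannot contain the two edges of $T'$ with endpoints on the copies of $\Delta$, so it is disjoint from $\psi_1\cup\psi_2$ and from the region where $(M,T,H)$ and $(M',T',H')$ differ; (RP1) and (RP2) then persist. The gap is in your treatment of (RP3)/(RP4). You assert that ``all of $\zeta$'s data lies in a neighborhood of $\zeta$, hence outside $W$, so it transplants verbatim.'' Nothing in the definition of a removable path confines the witnessing discs to a neighborhood of $\zeta$: the disc $D$ (or the cancelling pair), the compressing disc $E$, and above all the complete collection of bridge discs for $T'-H'$ are global objects. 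In fact the complete collection cannot avoid the surgery region: it must contain a disc for the bridge edge or bridge pod into which $\Delta_1=\Delta\cap C_1$ cuts the component $\tau$ of $T_1$ containing $\Delta\cap T$, and the $H'$--arc of that disc has an endpoint on one of the two discs of $H'$ that are deleted when the tube is reattached to form $H$. So verbatim transplantation fails at precisely the place that matters.

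Your proposed repair points in the wrong direction: regluing does not create ``a bridge arc of $e$ in $C_1$ near $p$.'' By the lemma immediately preceding this one, $\Delta_1$ cuts $\tau$ into a vertical arc and a bridge edge or bridge pod; after regluing, these pieces together with $\psi_1\cup\psi_2$ form a single component of $T\cap C_1$ with an endpoint on $\boundary_- M$, i.e.\ a vertical edge or vertical pod, for which no new bridge disc is to be adjoined. What actually needs doing is the opposite: discard or modify the disc(s) of the old collection attached to the deleted discs of $H'$ (supplying pod discs if $\tau$ is a vertical pod), and check that the discs of the collection lying in $C_2'$, as well as $D$ and $E$, can be isotoped off the $1$--handle region that returns to $C_1$ without destroying the disjointness and intersection conditions of (RP3)/(RP4). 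Your cleanup of $E$ does not address this: the isotopies removing $E\cap\Delta_1$ can reintroduce intersections with the other discs (which, as noted, need not avoid that region), and your claim that the point $E\cap T$ is not on $e$ is unjustified in the (RP3) case. So while the governing idea (disjointness of $\zeta$ from $\psi_1\cup\psi_2$) is the paper's, the disc--transplantation step as written would fail.
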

\begin{proof}
By the definition of removable path, the path is disjoint from $\psi_1 \cup \psi_2$. Therefore, it continues to be a removable path in $T$.
\end{proof}

\begin{lemma} $(M',T')$ and $H'$ do not satisfy conclusions (5) or (6) of Theorem \ref{Main Theorem}.
\end{lemma}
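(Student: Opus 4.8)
The plan is to assume, toward a contradiction, that $H'$ as a splitting of $(M',T')$ satisfies conclusion (5) or conclusion (6) of Theorem \ref{Main Theorem}, and to deduce that $H$ could not have been a Heegaard surface for $(M,T)$. The engine is a genus count. In case (5) we have $H' \approx S^2 \approx \boundary_+ M'$, and in case (6) the surface $H' - \inter{\eta}(T')$ is isotopic to $\boundary_+ M' - \inter{\eta}(T')$; so in either case $\genus(H') = \genus(\boundary_+ M')$. On the other hand $H'$ is obtained from $H$ by compressing along the disc $\Delta_1 = \Delta \cap C_1$, and since $H'$ is connected this compression is genuine and genus--reducing, so $\genus(H) = \genus(H') + 1$. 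Moreover $\boundary_+ M'$ is obtained from $\boundary_+ M$ by surgering along the curve $\boundary \Delta$ (which is nonseparating in $\boundary_+ M$, because $M'$ is connected) and capping off with two discs, so $\genus(\boundary_+ M) = \genus(\boundary_+ M') + 1$. Putting these together gives $\genus(H) = \genus(\boundary_+ M)$.

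Next I would use this to identify $H$ up to isotopy. Because $\Delta$ is an essential disc, $M \neq B^3$ and $\genus(\boundary_+ M) \geq 1$, so $H$ is a minimal genus Heegaard surface of the compressionbody $M$. Writing the side $C_2$ (which contains $\boundary_+ M$) as $\boundary_- C_2 \times I$ with $1$--handles attached and comparing Euler characteristics via $\chi(H) = \chi(\boundary_+ M)$ forces $C_2 \cong \boundary_+ M \times I$; that is, $H$ is isotopic to $\boundary_+ M$ pushed slightly into $M$, and $C_1 \cong M$ with $\boundary_+ C_1 = H$.

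Finally I would invoke the spine. By construction the elementary spine $T_0 \subset T$ of $M$ is disjoint from $\boundary_+ M$, so $T_0 \subset C_1 \cong M$. Since $H$ is a Heegaard surface for $(M,T)$, the graph $T_1 = T \cap C_1$ is trivially embedded in $C_1$, so there is a complete collection $\mc{D}$ of meridian discs for $C_1$ disjoint from $T_1$, hence from $T_0$; thus $T_0$ lies inside $C_1 | \mc{D}$, which is $\boundary_- M \times I$ (or a ball, if $\boundary_- M = \nil$). But $\boundary_- M \cup T_0$ is a spine of $M$, so $H_1(\boundary_- M \cup T_0) \to H_1(M)$ is onto; since this map factors through $H_1(C_1 | \mc{D}) = H_1(\boundary_- M)$, the inclusion $H_1(\boundary_- M) \to H_1(M)$ is onto. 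This is impossible, because $M$ is not a product (it contains the essential disc $\Delta$), whence $H_1(M) \cong H_1(\boundary_- M) \oplus \Z^{k}$ with $k \geq 1$ and the inclusion hits only the first summand. This contradiction shows that $(M',T')$ and $H'$ satisfy neither (5) nor (6).

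The step I expect to be the main obstacle is the identification of the minimal genus Heegaard surface of $M$ — in particular ruling out that $C_2$ swallows $2$--sphere components of $\boundary_- M$ through tubes. This should follow from the fact that such a sphere component is incompatible with $T$ containing the connected spine $T_0$: it would leave $\boundary_- M \cup T_0$ disconnected unless $T$ meets it, and one then has to track what happens to that part of $T$ under the reconstruction of $(M,T)$ from $(M',T')$. A secondary point to verify is that $\boundary \Delta_1$ is essential in $H$ (so that $H'$ really is a compression of $H$), which is immediate from $H'$ being a connected Heegaard surface of $(M',T')$.
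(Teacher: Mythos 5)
Your genus bookkeeping (steps 1--4) is fine, but the proof breaks down after that, and not only at the step you flagged. The step you flagged is a real gap: $\chi(H)=\chi(\boundary_+ M)$ only forces $\boundary_- C_2$ to consist of $\boundary_+ M$ together with some sphere components of $\boundary_- M$, each cancelled by a $1$--handle, and ruling out that configuration requires an actual argument about how $T$ meets those spheres, which you only sketch. More seriously, the final contradiction rests on the assertion that ``$T_0$ is disjoint from $\boundary_+ M$, so $T_0 \subset C_1$,'' and hence that $T_0$ lies in $C_1$ cut along $\mc{D}$. That is false: edges of the spine $T_0$ may cross $H$ many times, so $T_0 \cap C_2$ can contain bridge arcs even when $C_2 \cong \boundary_+ M \times I$. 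Concretely, take $M$ a solid torus, $T$ a core loop with one spoke, and $H$ a boundary-parallel torus, with the core isotoped so that it meets $H$ in two points and the vertex lies in $C_1$: then $T_1$ is a bridge pod disjoint from a meridian disc of $C_1$, $T_2$ is a bridge edge plus a vertical edge, so $H$ is a genuine Heegaard surface for $(M,T)$ with $\genus(H)=\genus(\boundary_+ M)$, $C_2$ a product, $T \supset T_0$ a spine, and $M$ possessing a vertical cut disc --- yet no homological contradiction arises, because $T_0 \not\subset C_1$. So the implication you actually invoke (``$C_2$ a product and $T$ contains a spine $\Rightarrow$ contradiction'') is not true; your reduction discards exactly the information that makes the lemma work, namely the detailed structure of conclusions (5) and (6) for $(M',T')$, which forces the edges of $T'$ to meet $H'$ minimally.

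That structural information is what the paper uses, and it gives a much shorter argument: reconstruct $(M,T,H)$ from $(M',T',H')$ by regluing the two copies of $\Delta$ and tubing $H'$ along the two vertical arcs $\psi_1,\psi_2 \subset C_2'$. If conclusion (5) held, the cycle of $T$ running through $\Delta$ would, after the tubing, be disjoint from $H$, which is impossible since a trivially embedded graph in a compressionbody contains no closed cycle; if conclusion (6) held, one would obtain a non-backtracking path of $T$ lying entirely in $C_1$ with both endpoints on $\boundary_- M$, which no trivially embedded graph admits. If you want to keep your genus-count framing, you would still have to bring in this kind of intersection information about $T$ and $H$ to finish; as written, the argument cannot be completed.
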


\begin{proof}
\textbf{Case a: } $M'$ is a 3--ball, $T'$ is a connected graph with at most one vertex, and $H'$ is the boundary of a regular neighborhood of that vertex.

In this case, a cycle of $T$ would be disjoint from $H$, contradicting the definition of bridge position. \qed(Case a)

\textbf{Case b: } $M' = \boundary_- M' \times I$, and $H - \inter{\eta}(T)$ is isotopic in $M - \inter{\eta}(T)$ to $\boundary_+ M - \inter{\eta}(T)$.

In this case, there would be a non-backtracking path in $T$ starting and ending at $\boundary_- M$ and remaining entirely inside $C_1$. This contradicts the definition of bridge position. \qed(Case b)
\end{proof}

Combining the previous corollaries we immediately obtain:
\begin{corollary}
If $(M,T)$ contains a vertical cut disc, then $H$ and $(M,T)$ satisfy one of conclusions (1) - (4) of Theorem \ref{Main Theorem}.
\end{corollary}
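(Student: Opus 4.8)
The plan is simply to combine the lemmas just established with the inductive hypothesis on $-\chi(\boundary_+ M)$. First I would fix a vertical cut disc $\Delta$ as furnished by the lemma, so that $H'$ is a Heegaard surface for $(M',T')$, condition (NPH) holds for $(M',T')$, and the disc component of $\Delta - H$ meets $T$. For each component $M'_0$ of $M'$ one has $-\chi(\boundary_+ M'_0) < -\chi(\boundary_+ M)$, so the inductive hypothesis applies and $H'$, viewed as a splitting of $(M'_0, T'_0)$, satisfies one of conclusions (1)--(6) of Theorem \ref{Main Theorem}. The lemma that rules out conclusions (5) and (6) for $(M',T')$ leaves only (1)--(4).

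Next I would do a short case analysis on $H'$. If $H$ itself is perturbed, conclusion (3) holds for $(M,T)$ and there is nothing more to prove, so assume $H$ is not perturbed. The perturbation lemma then says that either $T$ contains a removable path disjoint from $\boundary_+ M$, in which case conclusion (4) holds, or $\Delta$ may be re-chosen so that $H'$ is not perturbed; assume the latter. Now if $H'$ is stabilized or boundary-stabilized, the transfer lemma shows $H$ is stabilized or boundary-stabilized, giving conclusion (1) or (2). The remaining possibility is that $T'$ contains a removable path $\zeta$ disjoint from $\boundary_+ M'$; since $\boundary_- M' = \boundary_- M$, the endpoints of $\zeta$ (if $\zeta$ is not a cycle) lie on $\boundary_- M$, so $\zeta$ is disjoint from $\boundary_+ M$, and the lemma that removable paths in $T'$ persist in $T$ gives conclusion (4) for $(M,T)$.

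I do not expect any genuine obstacle at this stage: the corollary is purely a matter of assembling the preceding lemmas. The real work --- the part that could have failed --- was the perturbation lemma, since reconstructing $H$ from $H'$ by tubing along the vertical arcs $\psi_1, \psi_2$ could in principle create a perturbation of $H$; the point is that when this happens the configuration is exactly that of a removable path, which is why that lemma offers a removable path as its alternative conclusion. With that lemma in hand, the case check above is routine.
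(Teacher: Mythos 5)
Your proposal is correct and matches the paper's argument: the paper's proof of this corollary is literally just to combine the preceding lemmas of Case 2 (the inductive hypothesis applied to $(M',T')$, the transfer of stabilization and boundary-stabilization, the perturbation lemma with its removable-path alternative, the persistence of removable paths, and the exclusion of conclusions (5) and (6)). Your case analysis, including the observation that a removable path in $T'$ disjoint from $\boundary_+ M'$ has endpoints on $\boundary_- M$ and so is disjoint from $\boundary_+ M$, is exactly the intended assembly.
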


\appendix\section{A Combinatorial Lemma}
\begin{lemma}\label{Lem: Tree Lemma}
Suppose that $T'$ is a finite tree containing at least one edge that is properly embedded in a disc $D'$. Suppose that $\mc{P}$ is a finite nonempty collection of points on $T' - \boundary T'$ such that the following conditions hold:
\begin{itemize}
\item If $\tild{D}$ is the closure of a component of $D' - T'$ then $\tild{D}$ contains exactly one point of $\mc{P}$ in its boundary.
\item Each edge of $T'$ without an endpoint on $\boundary D'$ contains at least one point of $\mc{P}$, possibly at one or both of its endpoints.
\item If $e_1$ and $e_2$ are edges in $T'$, each with a single endpoint on $\boundary D$ and which share the other endpoint, then $e_1 \cup e_2$ contains at least one point of $\mc{P}$.
\end{itemize}
Then $|\mc{P}| = 1$ and $T'$ is either a single edge or has a single vertex not in $\boundary T'$. Furthermore, in the latter case, the point in $\mc{P}$ is the single vertex of $T' - \boundary T'$.
\end{lemma}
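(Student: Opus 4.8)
The plan is to translate everything into the language of the complementary regions of $T'$ in $D'$ and then argue that $T'$ must be very small. Throughout I use the standing convention that $T'$ has no valence-two vertices (without it the "furthermore'' clause is false, e.g. for $c$--$u$--$v$ with $u$ of valence $2$). First I would record the basic structure. Since $T'$ is a tree with at least one edge, properly embedded in the disc $D'$, its $L\ge 2$ valence-one vertices (the ``leaves'', $=\boundary T'$) lie on $\boundary D'$ and cut $\boundary D'$ into $L$ arcs; an Euler-characteristic count ($\chi(D')=1$, $V-E=1$ for a tree) gives that $D'\setminus T'$ has exactly $L$ components, each an open disc whose frontier consists of exactly one of those $L$ boundary arcs together with the unique embedded path of $T'$ joining the two consecutive leaves at the ends of that arc. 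Under this dictionary, hypothesis $(1)$ says precisely: \emph{for every pair of consecutive leaves, the $T'$-path between them contains exactly one point of $\mc P$}; also note $\mc P$ contains no leaf.

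Next the base case: if $T'$ is a single edge $e$, both regions have the whole open arc $\mathring e$ in their frontier, so $(1)$ forces $|\mc P|=1$ and the conclusion holds. For the inductive step I want to show $T'$ has at most one interior vertex, using two claims about a \emph{pre-leaf} vertex $w$ (an interior vertex, hence of valence $\ge 3$, all but one of whose neighbours are leaves). Such a $w$ exists as soon as $T'$ has two interior vertices, as a leaf of the subtree of $T'$ spanned by its interior vertices (which is connected, by the no-valence-two convention); then $w$ has $k=\deg w-1\ge 2$ leaf edges $\ell_1,\dots,\ell_k$ to leaves $u_1,\dots,u_k$ and one interior edge $e_0=ww'$. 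By planarity $u_1,\dots,u_k$ form a contiguous block on $\boundary D'$, and the ``$w'$-side'' of $e_0$ is a subtree with at least two leaves, so the leaf $u_0$ just before the block and the leaf $u_{k+1}$ just after it are distinct and distinct from $u_1,\dots,u_k$.

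\textbf{Claim 1: $w\in\mc P$.} If not, then $(3)$ applied to the pair $\{\ell_1,\ell_k\}$ (both leaf edges at $w$) puts a point of $\mc P$ in $\mathring\ell_1\cup\mathring\ell_k$, say in $\mathring\ell_1$. The $T'$-path from $u_0$ to $u_1$ runs $u_0\to\cdots\to w'\to w\to u_1$, so it contains $\ell_1$ and all of $e_0$; its unique $\mc P$-point must then be the one in $\mathring\ell_1$, so $e_0$ carries no point of $\mc P$, contradicting $(2)$ for the interior edge $e_0$ (symmetrically if the point is in $\mathring\ell_k$, using the path $u_k\to\cdots\to w'\to\cdots\to u_{k+1}$). \textbf{Claim 2: $w'\in\mc P$.} Given Claim 1, suppose $w'\notin\mc P$. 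The path $\pi(u_0,u_1)$ passes through $w\in\mc P$, so by $(1)$ that is its unique $\mc P$-point; hence the first edge $g$ at $w'$ of the subpath heading towards $u_0$ carries no point of $\mc P$, and likewise the first edge $g'$ at $w'$ of the subpath of $\pi(u_k,u_{k+1})$ heading towards $u_{k+1}$ carries none. Both $g,g'$ are edges at $w'$ different from $e_0$, and they are distinct (since $\deg w'\ge 3$ and, by planarity, $u_0$ and $u_{k+1}$ lie in different branches at $w'$). If either of $g,g'$ is an interior edge this contradicts $(2)$; if both are leaf edges it contradicts $(3)$ at $w'$. So $w,w'\in\mc P$ — but then $\pi(u_0,u_1)$ passes through both $w$ and $w'$, giving it two distinct $\mc P$-points, contradicting $(1)$. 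Therefore $T'$ has at most one interior vertex.

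Finally I close the two remaining cases. If $T'$ has no interior vertex it is a single edge (base case). If it has exactly one interior vertex $c$, then every $T'$-path between consecutive leaves passes through $c$, so by $(3)$ either $c\in\mc P$ or the set of leaf edges carrying a point of $\mc P$ is an independent dominating set on the cycle $C_L$ of leaf edges whose complement has $\le 1$ element — forcing $L\le 2$, impossible since $L=\deg c\ge 3$; hence $c\in\mc P$, and any further point of $\mc P$ would give some such path two points, so $\mc P=\{c\}$. In all cases $|\mc P|=1$ and the structural conclusion holds. I expect the genuinely delicate part of the write-up to be not the combinatorics of Claims 1 and 2 (which are short and are the real engine) but the planar bookkeeping: verifying the region/frontier dictionary, that the leaves at $w$ form a contiguous block, and that the auxiliary leaves $u_0,u_{k+1}$ are distinct from the block and from each other. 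Those verifications should be routine but will need to be stated carefully.
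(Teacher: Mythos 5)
Your proof is correct, but it takes a genuinely different route from the one in the paper. The paper argues by induction on $|\partial T'|$: since each complementary region meets $\mathcal{P}$ exactly once, there is a boundary-incident edge $\alpha$ whose interior misses $\mathcal{P}$; a push-off argument inside the disc (producing the auxiliary edge $e$ and invoking the third hypothesis) shows the interior endpoint $\partial_0\alpha$ lies in $\mathcal{P}$; then $\alpha$ is deleted, the hypotheses are checked for the smaller tree, and induction finishes. You instead give a direct structural argument: despite your phrase ``inductive step'' you never actually induct, but show outright that $T'$ has at most one interior vertex by locating a pre-leaf vertex $w$ with interior neighbour $w'$ and running Claims 1 and 2 (forcing $w,w'\in\mathcal{P}$ and contradicting the one-point-per-region condition along the path $\pi(u_0,u_1)$), and then you settle the remaining star case by the independence-versus-complement counting argument, which correctly forces the centre into $\mathcal{P}$. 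Your planar input is the contiguous-block fact for the leaves of each branch of an embedded tree (needed both for the region-to-consecutive-leaf-path dictionary and for $u_0,u_{k+1}$ lying in distinct branches at $w'$, where the hypothesis $\deg w'\ge 3$ is essential); the paper's planar input is instead the single push-off of the path $\beta$ near $\alpha$. What each buys: the paper's induction keeps the planar bookkeeping minimal but must verify that the reduced tree $T''$ (with $\partial_0\alpha$ possibly becoming a valence-two point) still satisfies the hypotheses; your argument avoids any reduction step and gives a cleaner global picture, at the cost of the dictionary and contiguity verifications you rightly flag as needing careful statement. Your explicit remark that the ``furthermore'' clause requires the no-valence-two convention is accurate and worth keeping: the paper relies on its standing convention (and, in the application, on suppressing a valence-two vertex of $T_s\cap D'$), and both proofs need it.
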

\begin{proof}
We induct on $|\boundary T'|$. If $|\boundary T'|=2$, then $T'$ is a single edge and the lemma is obvious. We suppose, therefore, that $|\boundary T'| \geq 3$. We begin with a

\textbf{Key Observation:} Suppose that $p_1$ and $p_2$ are distinct points in $P$ and that $\beta \subset T'$ is a path with interior disjoint from $\mc{P}$ joining them. Then, since every edge of $T'$ with neither endpoint on $\boundary D'$ contains a point of $\mc{P}$, the interior of $\beta$ can contain at most one vertex of $\mc{P}$.

Since $|\boundary T'| \geq 3$, there are at least 3 distinct edges of $T$ with an endpoint on $\boundary D'$. Since the closure of each component of $D' - T$ contains exactly one point of $\mc{P}$, there exists an edge $\alpha$ which does not contain a point of $\mc{P}$ in its interior. Let $\boundary_0 \alpha$ be the endpoint of $\alpha$ not on $\boundary D'$.

\textbf{Claim:} The point $\boundary_0 \alpha$ is in $\mc{P}$.

Suppose not. Let $\tild{D}_1$ and $\tild{D}_2$ be the closures of the components of $D' - T'$ adjacent to $\alpha$. Since $T'$ is properly embedded in $D'$, $\tild{D}_1 \neq \tild{D}_2$. Let $p_1$ and $p_2$ be the points of $\mc{P}$ contained in $\tild{D}_1$ and $\tild{D}_2$ respectively. These are distinct points. Let $\beta$ be the path in $T'$ joining them and notice that the interior of $\beta$ is disjoint from $\mc{P}$. By the key observation, $\boundary_0 \alpha$ is the sole vertex of $T'$ contained in the interior of $\beta$. Push the interior of $\beta$ off $T$ away from $\alpha$ to create a path $\beta'$. If the interior of $\beta'$ is disjoint from $T'$ then we have contradicted the hypothesis that the closure of each component of $D' - T'$ is adjacent to exactly one point of $\mc{P}$. Thus the interior of $\beta'$ must intersect $T'$. As we travel from $p_1$ to $p_2$ along $\beta'$, let $e$ be the first edge of $T$ we encounter on the interior of $\beta'$. Notice that $e$ has an endpoint at $\boundary_0 \alpha$ but that $e \neq \alpha$. See Figure \ref{Fig: CombinatorialLemma}.

\begin{center}
\begin{figure}[ht]
\scalebox{0.5}{\input{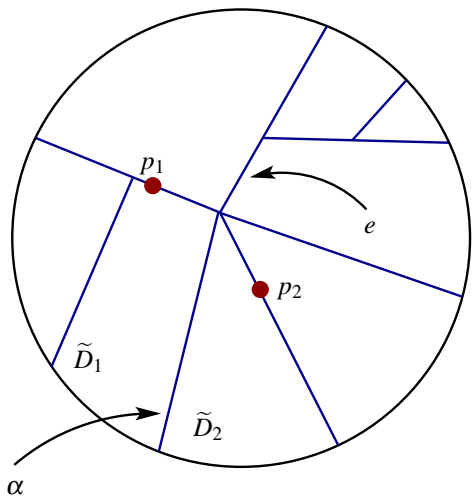_t}}
\caption{The edge $e$.}
\label{Fig: CombinatorialLemma}
\end{figure}
\end{center}

The closure of one of the components of $D' - T'$ contains both $p_1$ and $e$. Let $\tild{D}_3$ be the closure of that component. Since $p_1$ is not in $e$ and since $\tild{D}_3$ contains exactly one point of $\mc{P}$, $e$ does not contain a point of $\mc{P}$. Thus, $e$ has one endpoint on $\boundary D'$ and the other at $\boundary_0 \alpha$. Since $\alpha$ is disjoint from $\mc{P}$, $\alpha \cup e$ contradicts our final hypothesis on $\mc{P}$. This contradiction shows that $\boundary_0 \alpha \in \mc{P}$. \qed (Claim)

Let $T''$ be the result of removing $\alpha$ from $T'$. If $\boundary_0 \alpha$ is a trivalent vertex of $T'$, it is no longer a vertex of $T''$. $T''$ is a tree with $|\boundary T''| < |\boundary T'|$ since one endpoint of $\alpha$ is on $\boundary D'$. By the claim, $T''$ and $\mc{P}$ satisfy the hypotheses of the lemma. Hence, by induction, $|\mc{P}| = 1$ and $T''$ is either a single edge or $T''$ has a single interior vertex and that vertex is the sole point in $\mc{P}$. Since $\boundary_0 \alpha \in P$, reattaching $\alpha$ to produce $T'$ shows that $T'$ also satisfies the conclusions of the lemma. 
\end{proof}


\begin{thebibliography}{99}

\bibitem[CG]{CG} {Casson and Gordon} `Reducing Heegaard splittings'.
{\it Topology Appl. } 27  (1987),  no. 3, 275--283.

\bibitem[F]{F} {Frohman} `The topological uniqueness of triply periodic minimal surfaces in
 $R\sp 3$'. {\it J. Differential Geom.}  31  (1990),  no. 1, 277--283.

\bibitem[G]{G} {Gabai} `Foliations and the topology of $3$-manifolds. III'.
{\it J. Differential Geom.}  26  (1987),  no. 3, 479--536.

\bibitem[HS1]{HS1} {Hayashi and Shimokawa} `Heegaard splittings of the pair of the solid torus and the core loop'. {\it Rev. Mat. Complut.}  14  (2001),  no. 2, 479--501.

\bibitem[HS2]{HS2} {Hayashi and Shimokawa} `Heegaard splittings of trivial arcs in compressionbodies'. {\it J. Knot Theory Ramifications}  10  (2001),  no. 1, 71--87.

\bibitem[HS3]{HS3} {Hayashi and Shimokawa} `Thin position of a pair (3-manifold, 1-submanifold)'. {\it Pacific J. Math.}  197  (2001),  no. 2, 301--324.

\bibitem[GST]{GST}{Goda, Scharlemann, and Thompson} `Levelling an unknotting tunnel'.{\it Geom. Topol.}  4  (2000), 243--275 (electronic).

\bibitem[L]{L}{Li} `Thin position and planar surfaces for graphs in the 3-sphere'. {\tt arxiv: 0807.2865}

\bibitem[S]{S}{Scharlemann} `Thin position in the theory of classical knots'.
{\it Handbook of knot theory}, 429--459, Elsevier B. V., Amsterdam,  2005. 

\bibitem[ST1]{ST}{Scharlemann and Thompson} `Heegaard splittings of $({\rm surface})\times I$ are standard'. {\it Math. Ann.}  295  (1993),  no. 3, 549--564.

\bibitem[ST2]{ST2}{Scharlemann and Thompson} `Thin position and Heegaard splittings of the $3$-sphere'. {\it J. Differential Geom.}  39  (1994),  no. 2, 343--357.

\bibitem[ST3]{ST3}{Scharlemann and Thompson} `Thin position for $3$-manifolds'.
{\it Geometric topology (Haifa, 1992)}, 231--238, Contemp. Math., 164, Amer. Math. Soc., Providence, RI,  1994.

\bibitem[STo]{STo}{Scharlemann and Tomova} `Uniqueness of bridge surfaces for 2-bridge knots'. {\it Math. Proc. Cambridge Philos. Soc.}  144  (2008),  no. 3, 639--650.

\bibitem[Sc]{Sc}{Schultens} `The classification of Heegaard splittings for (compact orientable surface)$\,\times\, S\sp 1$'. {\it Proc. London Math. Soc.} (3)  67  (1993),  no. 2, 425--448.

\bibitem[TT]{TT}{Taylor and Tomova} `Essential surfaces in (3-manifold, graph) pairs and leveling edges of Heegaard spines'. {\tt arxiv: 0910.3251}

\bibitem[T]{T}{Tomova} `Thin position for knots in a 3-manifold'.
 {\it J. Lond. Math. Soc.} (2)  80  (2009),  no. 1, 85--98.


\end{thebibliography}
\end{document}